\tikzstyle{model} = [rectangle,  minimum height=2cm,font=\normalsize,text centered, draw=black, fill=black!5]
\tikzstyle{title}=[fill=black!20, text=black,rounded corners,font=\small\bfseries\sffamily]
\renewcommand{\epsilon}{\varepsilon}
\numberwithin{equation}{section}
\newtheoremstyle{thmlemcorr}{10pt}{10pt}{\itshape}{}{\bfseries}{.}{10pt}{{\thmname{#1}\thmnumber{ #2}\thmnote{ (#3)}}}
\newtheoremstyle{thmlemcorr*}{10pt}{10pt}{\itshape}{}{\bfseries}{.}\newline{{\thmname{#1}\thmnumber{ #2}\thmnote{ (#3)}}}
\newtheoremstyle{defi}{10pt}{10pt}{\itshape}{}{\bfseries}{.}{10pt}{{\thmname{#1}\thmnumber{ #2}\thmnote{ (#3)}}}
\newtheoremstyle{remexample}{10pt}{10pt}{}{}{\bfseries}{.}{10pt}{{\thmname{#1}\thmnumber{ #2}\thmnote{ (#3)}}}
\newtheoremstyle{ass}{10pt}{10pt}{}{}{\bfseries}{.}{10pt}{{\thmname{#1}\thmnumber{ A#2}\thmnote{ (#3)}}}
\theoremstyle{thmlemcorr}
\newtheorem{theorem}{Theorem}
\numberwithin{theorem}{section}
\newtheorem{lemma}[theorem]{Lemma}
\newtheorem{corollary}[theorem]{Corollary}
\newtheorem{proposition}[theorem]{Proposition}
\theoremstyle{thmlemcorr*}
\newtheorem{theorem*}{Theorem}
\newtheorem{lemma*}[theorem]{Lemma}
\newtheorem{corollary*}[theorem]{Corollary}
\newtheorem{proposition*}[theorem]{Proposition}
\newtheorem{problem*}[theorem]{Problem}
\newtheorem{conjecture*}[theorem]{Conjecture}
\theoremstyle{defi}
\newtheorem{definition}[theorem]{Definition}
\theoremstyle{remexample}
\newtheorem{remark}[theorem]{Remark}
\newtheorem{example}[theorem]{Example}
\theoremstyle{ass}
\DeclareMathOperator{\esssup}{ess\,sup}
\newcommand{\R}{\mathbb{R}}
\def\XXint#1#2#3{{\setbox0=\hbox{$#1{#2#3}{\int}$} 
\vcenter{\hbox{$#2#3$}}\kern-.5\wd0}}
\definecolor{vg}{rgb}{0.0, 0.26, 0.15}
\title[Revisited convexity notions for $L^\infty$ variational problems]{Revisited convexity notions for $L^\infty$ variational problems}
\author{Ana Margarida Ribeiro}
\address{Center for Mathematics and Applications (NOVA Math) and Department of Mathematics, NOVA FCT, Quinta da Torre, 2829-516 Caparica, Portugal}
\email{amfr at fct.unl.pt}
\author{Elvira Zappale}
\address{Dipartimento di Scienze di Base ed Applicate per l'Ingegneria, Sapienza, Universit\'a di Roma, via Antonio Scarpa 16, 00161 Roma, Italy}
\email{elvira.zappale at uniroma1.it}
\begin{document}


\maketitle
\thispagestyle{empty}

\begin{abstract}
We address a deep study of the convexity notions that arise in the study of weak* lower semicontinuity of supremal functionals as well as those raised by the power-law approximation of such functionals. Our quest is motivated by the knowledge we have on the analogous integral functionals and aims at establishing a solid groundwork to ease any research in the $L^\infty$ context.
\vspace{8pt}

\noindent\textsc{MSC (2020):} 26B25; 49J45

\noindent\textsc{Keywords: convexity, supremal functionals, power-law approximation, $L^\infty$- variational problems} 
 
\vspace{8pt}

\noindent\textsc{Date:} \today.
\end{abstract}


\section{Introduction}

In these past decades there has been a growing interest towards $L^\infty$ variational problems because of their main applications. Indeed, in a first moment, they appeared empirically in the search for  bounds in optimal design problems such as determining the yield set of a polycrystal, or the first failure of a dielectric, in particular, in connection with power-law approximation. This latter method has shown to be a quite efficient procedure to describe the mentioned phenomena. In fact, not only it was adopted in physics literature (see \cite{HS, TW1,TW2, TW3}) but, later on,  a rigorous  mathematical justification was provided, see \cite{B, BHH, BM,  BMP, BN, BP,  BGP, CDP, EP, GNP, GK, KS, NTW,  PZ}.


Variational models in $L^\infty$ have also emerged in connection with Lipschitz extension problems \cite{BDM,J1} and as the energetic formulation of non linear partial differential equations (see for, by now, classical results \cite{A1}-\cite{A4},  and the more recent contributions, \cite{B1, BEJ}, also for higher order problems contained in \cite{AK, CKP} and the bibliography contained therein).

Then a wide literature has been developed, also in the non-Euclidean setting, starting from \cite{J1} and its quoting literature, in connection with Dirichlet forms, pre-fractal sets, Finsler structures, etc., see \cite{CCV, CV, GP, GXY, KSZ}, among a much wider scientific production. 

It is worth to recall that these mathematical models play also an important role in the context of optimal transport, game theory, partial differential equations, non-local problems also in connection with artificial intelligence problems, etc., see e.g. \cite{BBJ, BJR, BDP, BKDP, GZ, KZ, KRZ}.


Here, we are interested on those of the above models that are formulated through energy minimization. In particular, we consider what we call a {\sl supremal problem}, i.e. the  minimization of a functional 
\begin{equation}\label{intro supremal funcional}
F(u,O)=\operatorname*{ess\,sup}_{x\in \Omega} f(x, u(x), D u(x)),\ O\subseteq\Omega\text{ open },\ u\in W^{1,\infty}(O;\mathbb{R}^N)
\end{equation}
where $\Omega$ is an open and bounded set of $\mathbb{R}^n$ and $f:\Omega\times\mathbb{R}^n\times\mathbb{R}^{N\times n}\longrightarrow\mathbb{R}$ is a Carathéodory function. We call the function $f$ {\sl supremand}.

In contrast with the minimization of functionals in the integral form (that we assume the reader is familiar with, see \cite{Dbook,Fonseca-Leoni-unpub, Rindler} for an overview of the subject), the appropriate minimization in the present framework is the study of {\sl absolute minimizers}, see \cite[Definition 1.1]{CDP} and \cite{J}. This requires to deal with the minimization in a fixed domain, say, the minimization in the space $u_0+W_0^{1,\infty}(\Omega;\mathbb{R}^N)$ of the functional $F(\cdot,\Omega)$, for a given boundary condition $u_0$.

In the seminal paper by Barron, Jensen, and Wang \cite{BJW}, a necessary and sufficient condition on the supremand $f$ to the sequential weak* lower semicontinuity of $F(\cdot,\Omega)$ was found. This yields results on the existence of minimizers to the previous functional. The condition was named {\sl strong Morrey quasiconvexity} and it is recalled in Definition \ref{def convex notions}. To ease checking this condition in the applications, necessary and sufficient conditions to strong Morrey quasiconvexity were also introduced. Namely, 
the sufficient condition {\sl polyquasiconvexity} and the necessary conditions {\sl weak Morrey quasiconvexity} and {\sl rank one quasiconvexity}. 
 It is also worth to take into account that problem \eqref{intro supremal funcional} was already subject of interest in the scalar setting, i.e. $n=1$ or $N=1$, with the necessary and sufficient condition for the sequential weak* lower semicontinuity  of $F(\cdot,\Omega)$ on the supremand
$f$ detected by \cite{BJ1, BJL, B1} and known in the optimization literature as {\it quasiconvexity} and later on named {\it level convexity} by \cite{ABP, P0, P}, and used for the supremal representation by \cite{BCJ1, BCJ2,CP}.

In a rough manner, one has the following 
$$\begin{array}{c}f \text{ level convex }\Rightarrow\ f\text{ polyquasiconvex}\Rightarrow\ f\text{ strong Morrey quasiconvex}\Rightarrow\ \vspace{0.2cm}\\ 
\Rightarrow\ f\text{ weak Morrey quasiconvex}\Rightarrow\ f\text{ rank-one quasiconvex}.\end{array}$$
If one is acquainted with the Direct Method of the Calculus of Variations in the context of integral minimization and the related theory for vectorial problems, the previous chain of implications seems familiar and natural. As we will see, the specificity of supremal problems brings into play new features and, even the above implications shall be read with care under appropriate additional assumptions. Besides, the treatment of minimization problems through $L^p$ approximation, cf. \cite{ APSIMA, APESAIM, CDP, PZ}, brought into play other relevant conditions. 

In this paper we will focus on the properties of supremands enjoying the above convexity conditions. We trust the present paper provides a clear baseline to researchers dealing with problems in the field, as it gathers many properties and results disperse in the literature, accurately clarifying some features of the concepts under study, adding also novel insights contributing to a unified vision for the study of supremal problems. Moreover, this is a fundamental step to further proceed to our ultimate goal, that we postpone for a future work, that is to extend to the vectorial setting the previous work of the authors \cite{RZ}, namely, to provide conditions to ensure the existence of minimizers when the supremand $f$ lacks of the strong Morrey quasiconvexity and the Direct Method cannot be applied.

Next, we describe how the paper is organized as well as the ideas and questions that have driven our analysis. We note that, with the exception of Appendix B, in all our work we restrict to supremands only depending on the gradient variable. This allows to distinguish whether additional assumptions that one finds in the literature are intrinsic, or not, to the property under study.

Section 2 is devoted to the integral notion of {\sl quasiconvexity} (see Definition \ref{qcxdef}) which is the fundamental property associated to the sequential weak* lower semicontinuity in $W^{1,\infty}(\Omega;\mathbb{R}^N)$ of functionals of the form
$$I(u):=\int_\Omega f(Du(x))\,dx,\quad u\in u_0+W_0^{1,\infty}(\Omega;\mathbb{R}^N)$$ for a given function $f:\mathbb{R}^{N\times n}\to\mathbb{R}.$ Essentially, we recall existing results and, although the different context, this will be useful in the subsequent sections. 
	Aiming at formulating in the later sections our results in the greatest possible generality, we refer to the presentation of the monograph \cite{Fonseca-Leoni-unpub}. Besides, we highlight, that we establish, in Proposition \ref{new characterization of quasiconvexity}, a new characterization of quasiconvexity, motivated by the supremal notion of strong Morrey quasiconvexity treated later on, in Section 3. 

In Section 3 we consider in detail the notions introduced by Barron, Jensen and Young, previously mentioned. Several of the questions that we address in this section are motivated by properties that are well known in the integral setting. Being the first issue addressed the lower semicontinuity encoded, or not, by the notions under our attention. As we will see in Proposition \ref{fSMqcxOmegaflsc}, among the notions considered in this section, only the strong Morrey quasiconvexity encodes this property. 

Another question that leaded our investigation was, whether in the case of the supremand $f$ is strong Morrey quasiconvex and the boundary condition $u_0$ is an affine map, $u_0$ is also a minimizer for the functional $F(u,\Omega)$ in \eqref{intro supremal funcional}. The analogue to this in the integral setting is well known and amounts to the fact that the quasiconvexity notion is independent of the domain where the integral is considered. Therefore, we are led to the question of the independence of domain of strong Morrey quasiconvex functions. Besides, if we stick to the starting point of this discussion, our question is precisely equivalent to the independence of domain of weak Morrey quasiconvex functions. While for weak Morrey quasiconvexity we obtained a positive answer, cf. Proposition \ref{lemma weak independent}, and thus, we get that affine boundary conditions are minimizers to the problem described above, cf. Corollary \ref{Corollary 3.8}, the independence of domain for strong Morrey quasiconvexity was only ensured under some conditions on the sets, in particular, its convexity, see Proposition \ref{proposition independence of domain}. We note that the result on independence of the domain in the definition of strong Morrey quasiconvexity has been obtained by exploiting the lower semicontinuity of the related supremal functional, requiring to adapt results in \cite{BJW}. This is left to Appendix A. We just observe here that the independence of the domain in the convexity notions combines well with the fact that, in the nonhomogeneous setting, the supremal representation in terms of suitably 'convex' densities requires weakly* lower semicontinuity in every domain \cite[see counterexamples]{CP, P00, P0} and the bibliography contained therein.  

A deep understanding of minimization of integral functionals shows that the condition which is intrinsic to the weak* lower semicontinuity is the equivalent condition to quasiconvexity which is given by \eqref{qcxdef}, but testing on periodic functions. This is another source of research: whether in the notion of strong Morrey quasiconvexity, periodic functions can be considered. In this point, our analysis is not conclusive, motivating us to introduce the concept of {\sl periodic-weak Morrey quasiconvexity}. 

Still in Section 3, we investigate how do the notions of convexity introduced in this section relate between them. Our aim is to obtain an exhaustive study of these relations, therefore, whenever possible, we also provide counter-examples and we end the section with a list of the relations for which a satisfactory answer was not obtained. Also a proof of a characterization in terms of supremal Jensen's inequality involving probability measures under very mild assumptions is given in Appendix C.

In Section 4, our interest is to relate strong Morrey quasiconvexity with the convexity concepts raised by power-law approximation, not only as a way to deal with lower semicontinuity of $L^\infty$-variational problems, but also in order to rigorously obtain the latter ones by means of variational convergence departing from $L^p$- type norm functionals. More precisely, we relate strong Morrey quasiconvexity with the notions of {\sl ${\rm curl}_{(p>1)}$-Young quasiconvexity, ${\rm curl}$-Young quasiconvexity,} and {\sl ${\rm curl}-\infty$ quasiconvexity}, cf. Definition \ref{def more convex notions}, not necessarily under these names in the literature. As in Section 3, we also provide some counterexamples and we list some open questions of interest. In particular, we will see that coercivity always plays a crucial role. To this end, we start recalling the counterexamples to representation of weakly* lower semicontinuous supremal functionals, depending on gradients, in terms of non-homogeneous level convex densities (of the form $f(x,\xi)$) already in the scalar case, see \cite{P0} and the bibliography contained therein. Thus it arises naturally the question of comparing the notions providing sufficient conditions for representation of lower semicontinuous supremal functionals under coercivity hypotheses.  
Finally, due to the deep connections with Young measures, appearing already in some definitions, we will provide in Appendix B a new proof of sufficiency of $\rm curl$-Young quasiconvexity for weak* lower semicontinuity of supremal functionals (also in the nonhomogeneous setting). 

For the sake of completeness, we consider, in Section 5, the interplay between the convexity notions arising in the integral and the supremal settings. 

We will leave to further studies the comparison with supremal convexity notions using the duality theory in Convex analysis as in \cite{BL} and \cite{P00} or making use of the intrinsic distances as, e.g., in \cite{GPP} and \cite{GP}, or rephrasing the notions exploiting 
the connections with variational unbounded integral functionals and/or differential inclusions, see \cite{KZ, PZ, Z}.

\subsection{Notation} 
In the sequel we will make use of the following notation
\begin{itemize}
	\item We denote by $Q$ the unit cube of $\mathbb R^n$ centered at the origin with side length $1$, i.e. $Q:= \left(-\frac{1}{2}, \frac{1}{2}\right)^n$. 
	\item By $\mathcal L^n$ we denote the $n$-dimensional Lebesgue measure.
	\item For any set $E\subset \mathbb R^d$, $\chi_E$ denotes its characteristic function, i.e. $\chi_E(x)=\left\{\begin{array}{ll}
	1 &\hbox{ if } x \in E,\\
0 & \hbox{otherwise.}\end{array}\right.$
\item For every $\Omega\subset \mathbb R^n$ bounded open set with Lipschitz boundary, by $W^{1,\infty}_0(\Omega;\mathbb R^N)$ we denote the set $W^{1,\infty}(\Omega;\mathbb R^N)\cap W^{1,1}_0(\Omega;\mathbb R^N)$, where the latter set is the $W^{1,1}$-closure of $C^\infty_c(\Omega;\mathbb R^N)$.
\item For any cube $C\subset \mathbb R^n$, by $W^{1,\infty}_{\rm per}(C;\mathbb R^N)$ we denote the subset of $W^{1,\infty}(\mathbb R^n;\mathbb R^N)$,  made by $C$-periodic functions.
\end{itemize}


\section{A review of the integral notion of quasiconvexity and of some of its properties}

We recall the definition of quasiconvex function, fundamental in the minimization of vectorial integral functionals. A classic reference on this subject is the monograph \cite{Dbook}. In the sequel we sometimes refer to \cite{Fonseca-Leoni-unpub} where the quasiconvexity notion is given without requiring {\sl a priori} the local boundedness of the function. This shall be useful below when dealing with curl-$\infty$ quasiconvex functions. We call the attention for the new characterization of quasiconvexity established in Proposition \ref{new characterization of quasiconvexity}.

\begin{definition}\label{defqcx}
\label{qcx}A Borel measurable function $g:\mathbb R^{N \times n} \to \mathbb R$ is said quasiconvex if 
\begin{equation}\label{qcxdef}
g(\xi)\leq\int_Q g(\xi +D \varphi(x))\,dx
\end{equation}
for every $\xi\in\mathbb{R}^{N\times n}$ and for every $\varphi \in W^{1,\infty}_0(Q;\mathbb R^N)$, where $Q:= \left(-\frac{1}{2}, \frac{1}{2}\right)^n$. 
\end{definition}

\begin{remark}\label{rem-qcxdef}
\begin{enumerate}
\item[(i)] In the definition of quasiconvexity, one can also consider functions taking values in $[-\infty,\infty]$ but many properties may fail in this case. (See \cite{Fonseca-Leoni-unpub} and \cite{BM}.)

\item [(ii)] The notion of quasiconvexity is given in \cite{Fonseca-Leoni-unpub} by testing on the class of Lipschitz functions, 

$$\mathrm{Lip}_0(O;\mathbb{R}^N):=\left\{\varphi:O\longrightarrow\mathbb{R}^N|\ \varphi \text{ is Lipschitz in }\overline{O}\text{ and }\varphi=0 \text{ on the boundary}\right\},$$
which in the particular case of the set $O= Q$, coincides with ours.

\item[(iii)] As shown in \cite{Fonseca-Leoni-unpub}, 
 if $g$ is real valued then it is locally-Lipschitz, hence continuous and locally bounded.

\item[(iv)] In \eqref{qcxdef} 
 the cube $Q:=\left(-\frac{1}{2},\frac{1}{2}\right)^n$ can be replaced by any bounded open set $\Omega$ (averaging the integral in \eqref{qcxdef} by the measure of $\Omega$), {\sl cf.} \cite[Proposition 5.11]{Dbook}  (see also \cite{Fonseca-Leoni-unpub} 
). 
If, moreover, $\Omega$ has regular boundary, then the set of functions $\mathrm{Lip}_0(\Omega;\mathbb{R}^N)$ coincides with $W^{1,\infty}_0(\Omega;\mathbb{R}^N)$.

\item[(v)] Arguing as in 
\cite{Fonseca-Leoni-unpub}, if $g$ is real valued (then it is locally bounded) one can replace in the definition of quasiconvexity, via reverse Fatou's lemma, $W^{1,\infty}_0$ by $C_c^\infty$. 

\item[(vi)] We can replace $W^{1,\infty}_0(Q;\mathbb R^N)$ by $W^{1,\infty}_{\mathrm per}(Q;\mathbb R^N)$ as well as, {\sl cf. \cite{Fonseca-Leoni-unpub}}, by 

\noindent $\left\{\varphi\in W^{1,\infty}_{\rm loc}(\mathbb R^n;\mathbb R^N)|\ 
D\varphi\text{ is } Q\hbox{-periodic and }\int_Q D\varphi(x)\,dx=0\right\}$. Moreover, the unit cube $Q$ can be replaced by any cube, provided that the integral in \eqref{qcxdef} is averaged by the measure of the cube. 

\item[(vii)] The notion of quasiconvexity coincides with the $\mathcal{A}$-quasiconvexity ({\sl cf.} \cite[Remark 3.3]{FMAq}) in the case $\mathcal{A}=\mathrm{curl}$. 
\end{enumerate}
\end{remark}

Next we provide a new characterization of quasiconvexity, stemmed by the results in \cite{BJW}, in particular Proposition 2.4. For the readers convenience we present the proof.

\begin{proposition}\label{new characterization of quasiconvexity}
Let $g:\mathbb{R}^{N\times n}\longrightarrow\mathbb{R}$ be a Borel measurable function. Consider the following condition 
$$\forall\ \varepsilon>0\ \forall\ \xi\in \mathbb{R}^{N\times n}\ \forall\ K>0\ \exists\ \delta=\delta(\varepsilon, K,\xi)>0:$$
\begin{equation}\label{AltQ}
\left.\begin{array}{l}\varphi\in W^{1,\infty}(Q;\mathbb{R}^N)\vspace{0.2cm}\\ ||D\varphi||_{L^\infty(Q)}\le K\vspace{0.2cm}\\ \max_{x\in\partial Q}|\varphi(x)|\le \delta\end{array}\right\}\Longrightarrow g(\xi)\le \int_Q g(\xi+D\varphi(x))\,dx+\varepsilon.
\end{equation}
One has that $g$ is quasiconvex if and only if $g$ satisfies \eqref{AltQ}.
\end{proposition}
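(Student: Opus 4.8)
The plan is to prove the two implications separately, with the forward direction being essentially immediate and the reverse direction requiring a perturbation argument to pass from the relaxed boundary condition $\max_{x \in \partial Q}|\varphi(x)| \le \delta$ to the exact condition $\varphi \in W_0^{1,\infty}(Q;\mathbb{R}^N)$.

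For the implication ``quasiconvex $\Rightarrow$ \eqref{AltQ}'', I would argue as follows. Since $g$ is real-valued, by Remark \ref{rem-qcxdef}(iii) it is locally bounded, hence bounded on the ball $\{\eta \in \mathbb{R}^{N\times n} : |\eta| \le |\xi| + K\}$ by some constant $M = M(K,\xi)$. Given $\varphi \in W^{1,\infty}(Q;\mathbb{R}^N)$ with $\|D\varphi\|_{L^\infty(Q)} \le K$ and $\max_{x\in\partial Q}|\varphi(x)| \le \delta$, I would modify $\varphi$ near $\partial Q$ to make it vanish on the boundary: pick a cut-off $\theta \in C_c^\infty(Q;[0,1])$ with $\theta \equiv 1$ on $(1-t)Q$ for $t$ small, and set $\psi := \theta \varphi$. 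Actually a cleaner route is to use the standard lemma (the same one used in the proof of Remark \ref{rem-qcxdef}(iv)) allowing one to correct a nearly-zero boundary trace: one writes $\varphi = \varphi_0 + w$ where $\varphi_0 \in W_0^{1,\infty}(Q;\mathbb{R}^N)$ agrees with $\varphi$ on a large sub-cube and $w$ is supported in a thin collar $Q \setminus (1-t)Q$ of measure $O(t)$, with $\|Dw\|_{L^\infty} \lesssim K + \delta/t$. Choosing first $t$ small and then $\delta \ll t$, the contribution $\int_{Q\setminus(1-t)Q} g(\xi + D\varphi) - g(\xi + D\varphi_0)\,dx$ is bounded by $2M \cdot \mathcal{L}^n(Q\setminus(1-t)Q) = O(t) < \varepsilon$; on the remaining sub-cube the two integrands coincide. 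Then applying quasiconvexity \eqref{qcxdef} to $\varphi_0$ gives $g(\xi) \le \int_Q g(\xi + D\varphi_0)\,dx \le \int_Q g(\xi + D\varphi)\,dx + \varepsilon$, which is \eqref{AltQ}.

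For the converse ``\eqref{AltQ} $\Rightarrow$ quasiconvex'', the argument is trivial: given $\varphi \in W_0^{1,\infty}(Q;\mathbb{R}^N)$ and $\xi \in \mathbb{R}^{N\times n}$, set $K := \|D\varphi\|_{L^\infty(Q)}$; then for every $\varepsilon > 0$ and the corresponding $\delta = \delta(\varepsilon,K,\xi)$, the function $\varphi$ satisfies all three hypotheses of \eqref{AltQ} (the boundary condition holds with room to spare since $\varphi = 0$ on $\partial Q$), so $g(\xi) \le \int_Q g(\xi + D\varphi(x))\,dx + \varepsilon$; letting $\varepsilon \to 0$ yields \eqref{qcxdef}.

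The main obstacle is the boundary-correction step in the forward direction: one must ensure that cutting $\varphi$ down to zero near $\partial Q$ does not increase the integral of $g$ by more than $\varepsilon$, and for this it is essential to exploit both that the modification happens only on a collar of small measure \emph{and} that $g$ is bounded on the relevant compact set of matrix values (which in turn forces the gradient bound $\|D\varphi\|_{L^\infty}\le K$ to appear in the quantifier structure of \eqref{AltQ}, and the dependence $\delta = \delta(\varepsilon,K,\xi)$). One has to choose the collar thickness $t$ depending on $\varepsilon$, $K$, $\xi$ via the bound $M(K,\xi)$, and then $\delta$ depending on $t$; keeping track that the local-boundedness constant only depends on $K$ and $\xi$ (not on $\varphi$) is what makes the quantifiers work out.
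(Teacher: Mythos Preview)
Your proof is correct and follows essentially the same cutoff strategy as the paper. One small imprecision: the ball on which you define the bound $M$ must have radius strictly larger than $|\xi|+K$, since $D\varphi_0$ picks up an extra contribution of order $K+\delta/t$ from the cutoff gradient; once you fix $\delta/t$ bounded (as your ``$\delta\ll t$'' already suggests) this is still a compact set depending only on $K$ and $\xi$, and the quantifiers work out as you describe. The paper's version differs only in that it exploits the local \emph{Lipschitz} continuity of $g$ (also a consequence of Remark~\ref{rem-qcxdef}(iii)) rather than mere local boundedness to estimate the collar error, which allows the collar width to be tied directly to $\delta$ via $\eta=\delta/(2K)$ instead of being chosen first.
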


\begin{proof} Quasiconvexity follows immediately from \eqref{AltQ} applied to test functions $\varphi\in W^{1,\infty}_0(Q;\mathbb{R}^N)$, letting $\varepsilon\to 0$. 

To prove the reverse implication let $\varepsilon>0$, $\xi\in\mathbb{R}^{N\times n}$ and $K>0$ be arbitrary. Let $\varphi\in W^{1,\infty}(Q;\mathbb{R}^N)$ be such that $\|D \varphi\|_{L^\infty(Q)}\le K$ and $\max_{x \in \partial Q}|\varphi(x)|\le \delta$ for some $\delta$ to be chosen later. Then, having in mind that, by the Lipschitz continuity of $\varphi$ in $Q$, $|\varphi(y)-\varphi(x)|\le K|y-x|$ for any $x\in\partial Q$ and $y\in Q$, one has $|\varphi (y)| \leq 2 \delta$ for any $y =(y_1,\dots, y_n) \in Q$, with $|y_i | \geq 1/2 - \eta$ for some $1 \leq i \leq n$, provided that $\eta\leq\frac{\delta}{K}$.

Now, given $\eta>0$ small, let $\psi_\eta \in C^1(\mathbb R^n)$ be such that $0\le \psi_\eta(y)\le 1$ for all $y\in \mathbb R^n$, 
$\psi_\eta \equiv 1$ for $y \in (1 -\eta)Q$, $\psi_\eta \equiv 0$ for $y \not \in Q$, and $|D\psi_\eta(y)|< \frac{c_0}{\eta}$ for every $y\in \mathbb{R}^n$ for some constant $c_0$ independent of $\eta$.

The quasiconvexity of $g$ entails that
\begin{align*}
g(\xi) & \leq \int_Q g(\xi +D(\psi_\eta \varphi)(x))\,dx\\
& = \int_Q g(\xi +D \varphi(x))\,dx + \int_{Q\setminus(1-\eta)Q} (g(\xi + D(\psi_\eta \varphi)(x))-g(\xi+D\varphi(x)))\,dx.
\end{align*}

To conclude the proof it suffices to estimate the latter integral on the right hand side by $\varepsilon$. To achieve this, observe that, since $g$ is quasiconvex, by (iii) in Remark \ref{rem-qcxdef}, there exists a constant $L$ depending on $\xi$ and $K$ above such that
\begin{align}\label{L}
|g(\zeta)- g(\zeta')|\leq L |\zeta-\zeta'| \hbox{ for every } \zeta, \zeta' \in B_R(\xi),
\end{align}
where the latter denotes the ball in $\mathbb R^{N\times n}$ centered at $\xi$ with radius $R:=K(1+4\,c_0)$.

Observe that $\|D \varphi\|_{L^\infty(Q)}\leq K\le R$ and, if we take $\eta=\frac{\delta}{2\, K}$, then $\|D(\psi_\eta \varphi)\|_{L^\infty(Q)}\le  R$. Therefore, by \eqref{L}, and observing that $\mathcal{L}^n(Q\setminus(1-\eta)Q)\le c_1\eta$, for some constant $c_1$ only depending on the dimension $n$, one gets
\begin{align*}
&\displaystyle{\int_{Q\setminus(1-\eta)Q}\left| g(\xi + D(\psi_\eta \varphi)(x))-g(\xi + D \varphi(x)) \right|\,dx}\leq \\
&\leq L \int_{Q\setminus (1-\eta)Q} (|D \varphi(x)| + |D \psi_\eta(x)|\,|\varphi(x)|)\,dx\\
&\leq L (K c_1\eta + 2c_0c_1 \delta)= c_1L( \tfrac{1}{2} + 2c_0) \delta.
\end{align*}
Then it is enough to take $\delta$ sufficiently small so that the last term is smaller than $\varepsilon$.
\end{proof}

\begin{definition}\label{qcxenv}
Let $g:\mathbb R^{N \times n} \to \mathbb R$ be a Borel measurable function. The greatest quasiconvex function below $g$ is called the quasiconvex envelope of $g$ and it is denoted by  $\mathcal Q g$, i.e. $\mathcal Q g:\mathbb R^{N\times n}\to \mathbb{R}$ is such that
$$\mathcal Q g(\xi):=\sup\left\{h(\xi)|\ h:\mathbb R^{N \times n} \to \mathbb R,\ h(\xi) \leq g(\xi) \text{ for all }\xi\in\mathbb R^{N \times n},\ h \hbox{ is quasiconvex}\right\}.$$
\end{definition}

\begin{remark}\label{remqcx}
Note that, for $g$ taking values in $\mathbb{R}$, $\mathcal Q g$ is well defined and it is a Borel measurable function, cf. 
	\cite{Fonseca-Leoni-unpub}.
\end{remark}

The following lemma will be useful for the sequel of this paper. It relies on results contained in \cite{Fonseca-Leoni-unpub}.

\begin{lemma}\label{lemmaenvelope} Let $g:\mathbb{R}^{N\times n}\longrightarrow\mathbb{R}$ be a Borel measurable function. Then, for every $\xi\in\mathbb{R}^{N\times n}$, one has
\begin{align*}
	\mathcal Q g(\xi)&= \inf\left\{\int_Q g(\xi +D \varphi(x))\,dx: \varphi \in W^{1,\infty}_{\rm loc}(\mathbb R^n;\mathbb R^N),\hbox{ with }D\varphi\ Q\hbox{-periodic and }\int_QD \varphi(x)\,dx=0\right\}\\
	&= \inf\left\{\int_Q g(\xi +D \varphi(x))\,dx: \varphi \in W^{1,\infty}_{\rm per}( Q;\mathbb R^N)\right\}\\
	&= \inf\left\{\int_Q  g(\xi +D \varphi(x))\,dx: \varphi \in W^{1,\infty}_0( Q;\mathbb R^N)\right\} \\
	&= \inf\left\{\frac{1}{\mathcal L^n(\Omega)}\int_\Omega  g(\xi +D \varphi(x))\,dx: \varphi \in W^{1,\infty}_0(\Omega;\mathbb R^N)\right\}.
\end{align*}
if $\Omega\subseteq\mathbb{R}^n$ is open and bounded with $\mathcal L^n(\partial \Omega)=0$.
\end{lemma}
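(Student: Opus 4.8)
The plan is to establish the chain of equalities by a mix of trivial inclusions of test-function classes and one genuine approximation argument, borrowing the structure of the classical proof that the quasiconvex envelope admits a ``periodic'' representation formula (see \cite{Fonseca-Leoni-unpub,Dbook}). Denote the four infima on the right-hand side by $I_{\mathrm{per},\mathrm{loc}}(\xi)$, $I_{\mathrm{per}}(\xi)$, $I_0(\xi)$ and $I_\Omega(\xi)$, in the order they appear. First I would record the easy comparisons between test classes: a function $\varphi\in W^{1,\infty}_{\mathrm{per}}(Q;\mathbb{R}^N)$ gives, after subtracting its mean gradient's potential — more simply, $\varphi$ itself already has $Q$-periodic gradient with zero average over $Q$ — so the first class contains (up to the identification of $\varphi$ with its periodic extension) the second, giving $I_{\mathrm{per},\mathrm{loc}}\le I_{\mathrm{per}}$; and any $\varphi\in W^{1,\infty}_0(Q;\mathbb{R}^N)$ extends by periodicity to an element of $W^{1,\infty}_{\mathrm{per}}(Q;\mathbb{R}^N)$, so $I_{\mathrm{per}}\le I_0$. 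This reduces the bulk of the work to two inequalities: $\mathcal{Q}g(\xi)=I_0(\xi)$ (which is the standard Dacorogna relaxation formula, already available since $g$ is real-valued; one cites \cite{Dbook,Fonseca-Leoni-unpub}), and $I_{\mathrm{per},\mathrm{loc}}(\xi)\le \mathcal Q g(\xi)$, together with the general-domain identity $I_0(\xi)=I_\Omega(\xi)$.

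The identity $I_0(\xi)=I_\Omega(\xi)$ for open bounded $\Omega$ with $\mathcal L^n(\partial\Omega)=0$ I would obtain exactly as in Remark \ref{rem-qcxdef}(iv): the infimum defining the relaxed energy is independent of the reference domain, by a Vitali covering of $\Omega$ (respectively $Q$) with rescaled copies of $Q$ (respectively $\Omega$) and gluing the correspondingly rescaled test functions, extended by zero on the leftover null-measure set; the hypothesis $\mathcal L^n(\partial\Omega)=0$ is what makes the covering error negligible. For the inequality $I_{\mathrm{per},\mathrm{loc}}(\xi)\le\mathcal{Q}g(\xi)$, the point is that the periodic class with zero-mean gradient is a priori larger than $W^{1,\infty}_0(Q;\mathbb{R}^N)$, so we need the reverse: every competitor $\varphi$ with $Q$-periodic gradient and $\int_QD\varphi=0$ can be approximated, in energy, by compactly supported ones. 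Here I would use a cut-off argument identical in spirit to the one in the proof of Proposition \ref{new characterization of quasiconvexity}: fix such a $\varphi$, note $\|D\varphi\|_{L^\infty}\le K$ for some $K$, pick $\psi_\eta\in C^1_c(Q)$ with $\psi_\eta\equiv1$ on $(1-\eta)Q$ and $|D\psi_\eta|\le c_0/\eta$, and estimate
\begin{align*}
\int_Q g(\xi+D(\psi_\eta\varphi))\,dx-\int_Q g(\xi+D\varphi)\,dx
&=\int_{Q\setminus(1-\eta)Q}\bigl(g(\xi+D(\psi_\eta\varphi))-g(\xi+D\varphi)\bigr)\,dx,
\end{align*}
which is $O(\eta)+O(\|\varphi\|_{L^\infty(Q\setminus(1-\eta)Q)})$ by the local Lipschitz bound on $g$ (Remark \ref{rem-qcxdef}(iii)); but $\varphi$ restricted to $Q\setminus(1-\eta)Q$ has small oscillation only after subtracting a constant — since $D\varphi$ is periodic with zero mean, $\varphi$ is bounded, and one arranges $\varphi$ to vanish in mean so that the boundary values are controlled, or more directly one exploits that $\psi_\eta\varphi\in W^{1,\infty}_0(Q;\mathbb{R}^N)$ to get $\mathcal Q g(\xi)\le\int_Q g(\xi+D(\psi_\eta\varphi))$ and let $\eta\to0$. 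Chaining: $\mathcal Q g(\xi)\le I_{\mathrm{per},\mathrm{loc}}(\xi)\le I_{\mathrm{per}}(\xi)\le I_0(\xi)=\mathcal Q g(\xi)$, and $I_0(\xi)=I_\Omega(\xi)$, closes all equalities.

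The main obstacle I anticipate is the $I_{\mathrm{per},\mathrm{loc}}(\xi)\le\mathcal Q g(\xi)$ step, precisely because the periodic class is the \emph{largest} of the four and one must show it does not buy a lower energy than the $W^{1,\infty}_0$ class; the subtlety is that truncating a periodic $\varphi$ by a cut-off changes the gradient on the boundary layer and, unlike the zero-boundary case, $\varphi$ need not be small there. The clean way around it is the observation that a $Q$-periodic-gradient, zero-$Q$-mean-gradient field has a bounded potential $\varphi$, so after normalizing $\int_Q\varphi=0$ one still only controls $\|\varphi\|_\infty$ by $\|D\varphi\|_\infty$ and the diameter, which is enough to make the boundary-layer term $\le c\,\eta\,L\,\|D\varphi\|_\infty+c\,c_0\,L\,\|\varphi\|_\infty$ — wait, this does not vanish. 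The correct fix, which I would adopt, is to not truncate $\varphi$ itself but to use that $\mathcal Q g(\xi)\le \frac{1}{\mathcal L^n(\Omega)}\int_\Omega g(\xi+D\varphi)$ already holds with $\Omega=kQ$ for the $k$-periodic extension (which \emph{is} admissible after a further cut-off over a boundary layer of relative width $1/k$, whose contribution is $O(1/k)$ uniformly), and let $k\to\infty$; this is exactly the standard argument and avoids the normalization issue entirely. In the write-up I would simply cite \cite{Fonseca-Leoni-unpub,Dbook} for this well-known step and devote the original work to verifying that the hypothesis $\mathcal L^n(\partial\Omega)=0$ suffices for the last line.
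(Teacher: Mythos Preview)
Your overall strategy is correct and closely parallels the paper's: both set up the chain $\mathcal Q g(\xi)\le I_{\mathrm{per},\mathrm{loc}}\le I_{\mathrm{per}}\le I_0=\mathcal Q g(\xi)$ using the trivial inclusions of test classes together with the standard Dacorogna formula, and both handle the general-domain identity $I_0=I_\Omega$ by the Vitali/covering argument cited from \cite{Fonseca-Leoni-unpub}. (A minor slip: in your first paragraph you list the needed nontrivial inequality as ``$I_{\mathrm{per},\mathrm{loc}}(\xi)\le\mathcal Q g(\xi)$'', but that direction is automatic from the inclusions; what you actually need, and what you correctly work on afterwards, is the reverse $\mathcal Q g(\xi)\le I_{\mathrm{per},\mathrm{loc}}(\xi)$.)

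The one genuine difference is in how that reverse inequality is obtained. You go for a direct approximation: periodic extension to $kQ$, cut-off on a boundary layer of relative width $1/k$, and $k\to\infty$. This works but, as you discovered, the naive cut-off on $Q$ itself fails and one has to be a bit careful. The paper bypasses the approximation entirely with a slicker trick: it quotes from \cite{Fonseca-Leoni-unpub} that the envelope $\mathcal Q g$ is itself \emph{strongly quasiconvex}, i.e.\ satisfies the quasiconvexity inequality already for periodic-gradient test functions with zero mean, so
\[
\mathcal Q g(\xi)\le \int_Q \mathcal Q g(\xi+D\varphi)\,dx\le \int_Q g(\xi+D\varphi)\,dx
\]
for every such $\varphi$, the second inequality being just $\mathcal Q g\le g$. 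Taking the infimum gives $\mathcal Q g(\xi)\le I_{\mathrm{per},\mathrm{loc}}(\xi)$ in one line. Your route reproves (in effect) the content of that cited property; the paper's route leverages it as a black box applied to the envelope rather than to $g$, which is what makes the boundary-layer issue disappear.
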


\begin{proof}
According to  \cite{Fonseca-Leoni-unpub}
the quasiconvex envelope is {\it strongly quasiconvex},  
thus
\begin{align*}
	\mathcal Q g(\xi)&\leq \inf\left\{\int_Q \mathcal Qg(\xi +D \varphi(x))\,dx: \varphi \in W^{1,\infty}_{\rm loc}(\mathbb R^n;\mathbb R^N),\hbox{ with }D\varphi\ Q\hbox{-periodic and }\int_QD \varphi(x)\,dx=0\right\}\\
	&\leq \inf\left\{\int_Q g(\xi +D \varphi(x))\,dx: \varphi \in W^{1,\infty}_{\rm loc}(\mathbb R^n;\mathbb R^N),\hbox{ with }D\varphi\ Q\hbox{-periodic and }\int_QD \varphi(x)\,dx=0\right\}\\
	&\leq \inf\left\{\int_Q g(\xi +D \varphi(x))\,dx: \varphi \in W^{1,\infty}_{\rm per}( Q;\mathbb R^N)\right\}\\
	&\leq \inf\left\{\int_Q  g(\xi +D \varphi(x))\,dx: \varphi \in W^{1,\infty}_0( Q;\mathbb R^N)\right\} \\
	&= \inf\left\{\int_Q  g(\xi +D \varphi(x))\,dx: \varphi \in Lip_0( Q;\mathbb R^N)\right\} \\
	&\leq \mathcal Q g(\xi),
\end{align*}
where in the equality we used Remark \ref{rem-qcxdef} (ii) and in the last inequality we used a result in 
\cite{Fonseca-Leoni-unpub}.
Thus all the formulas above coincide. Moreover,  in view of Remark \ref{rem-qcxdef} (iv) exploiting similar arguments as in the chain of inequalities above, and the invariance of the domain, proven in 
{Fonseca-Leoni-unpub} $Q g(\xi)$ it also coincides with 
$$\inf\left\{\int_\Omega  g(\xi +D \varphi(x))\,dx: \varphi \in W^{1,\infty}_0(\Omega;\mathbb R^N)\right\}$$ and with 
$$\inf\left\{\int_\Omega  g(\xi +D \varphi(x))\,dx: \varphi \in Lip_0( \Omega;\mathbb R^N)\right\}$$ provided $\Omega\subseteq\mathbb{R}^n$ is open and bounded with $\mathcal L^n(\partial \Omega)=0$.

\end{proof}

The following definition has been introduced by Ball and Murat (cf. \cite[Definition 2.1]{BM}). 

\begin{definition}\label{W1pqcx} Let $1\leq p \leq \infty$.
A Borel  function $g: \mathbb R^{N \times n}\to \mathbb R$ is $W^{1,p}$-quasiconvex at $\xi \in \mathbb R^{N \times n}$ if
\begin{equation}\label{w1pqcxeq}
g(\xi) \leq \int_Q g(\xi + D \varphi(x))\,dx
\end{equation}
for every $\varphi \in W^{1,p}_0(Q;\mathbb R^{N})$.
We say that $g$ is $W^{1,p}$-quasiconvex if it is $W^{1,p}$-quasiconvex
at every $\xi \in \mathbb R^{N\times n}$.
\end{definition}

\begin{remark}\label{W1pqcx}
\begin{enumerate}
\item[(i)] Note that $W^{1,\infty}$-quasiconvexity is the quasiconvexity introduced in Definition \ref{defqcx}.
\item[(ii)] The above definition can be given also when the range of $g$ is $[-\infty,+\infty]$. 
\item[(iii)]The set $Q$ can be replaced by any bounded open set $\Omega$ such that $\mathcal L^n(\partial \Omega)=0$ replacing the integral by an averaged integral in $\Omega$.
\item[(iv)]	If $g$  is $W^{1,p}$-quasiconvex
for some $1 \leq p \leq+\infty$ , then it is $W^{1,q}$-quasiconvex for all $p \leq q \leq +\infty$ (cf. \cite[Remark 2.2]{BM}). Thus
quasiconvexity and $W^{1,1}$-quasiconvexity are, respectively, the weakest and the
strongest condition.
\item[(v)] If $g : \mathbb R^{N\times n} \to \mathbb{R}$ 
and satisfies the growth condition
\begin{equation}\label{gpgab}
	g(\eta)\leq C(1+ |\eta|^p)
	\end{equation}
 for all $\eta \in \mathbb R^{N\times n}$, for some $C>0$, and $1\leq p<+\infty$, then $g$ is $W^{1,p}$-quasiconvex
at $\xi$ if and only if for every bounded and open set $\Omega$,  
$$g(\xi)\leq \frac{1}{\mathcal L^{n}(\Omega)}\int_\Omega g(\xi + D \varphi(x))\,dx$$
for every $\varphi \in C^\infty_c(\Omega;\mathbb R^{N})$. 
\item[(vi)] Also, 
if $g$ is $W^{1,p}$-quasiconvex and satisfies the coercivity condition
\begin{equation}\label{gpgbe}
g(\eta) \geq C'(|\eta|^p -1)
\end{equation}
for all $\eta \in \mathbb R^{N\times n}$, for some constant $C' > 0$, and $1 < p < +\infty$ , then $g$
is $W^{1,1}$-quasiconvex.
\end{enumerate}
\end{remark}

The previous remark allows to state, in the spirit of the characterizations obtained in Lemma \ref{lemmaenvelope} for the quasiconvex envelope of a function, an alternative formula in terms of $W^{1,p}_0(\Omega;\mathbb R^N)$ test functions. 
\begin{lemma}\label{lemmaenvelope2} Let $1 \leq p < +\infty,$ and let $g:\mathbb{R}^{N\times n}\longrightarrow\mathbb{R}$ be a Borel measurable function satisfying the growth condition \eqref{gpgab}. Let $\Omega$ be open, bounded with $\mathcal{L}^n(\partial\Omega)=0$. Then 
\begin{align*}
\mathcal Q g(\xi) = \inf\left\{\frac{1}{\mathcal L^n(\Omega)}\int_\Omega g(\xi +D \varphi(x))\,dx: \varphi \in W^{1,p}_0( \Omega;\mathbb R^N)\right\}.
\end{align*}
Furthermore, under the extra assumption that $g$ is upper semicontinuous, one has
\begin{align*}
\mathcal Q g(\xi) = \inf\left\{\int_Q g(\xi +D \varphi(x))\,dx: \varphi \in W^{1,p}_{\rm per}(Q;\mathbb R^N)\right\}.
\end{align*}
\end{lemma}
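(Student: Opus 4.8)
The strategy is to combine Lemma~\ref{lemmaenvelope}, which already gives the formula with $W^{1,\infty}_0$ test functions, with the extra information coming from the growth condition \eqref{gpgab} and Remark~\ref{W1pqcx}(iv)--(v). Denote by $m_p(\xi)$ the right-hand side of the first claimed identity and by $m_\infty(\xi)$ the corresponding infimum over $W^{1,\infty}_0(\Omega;\mathbb{R}^N)$; by Lemma~\ref{lemmaenvelope}, $m_\infty(\xi)=\mathcal{Q}g(\xi)$. Since $W^{1,\infty}_0(\Omega;\mathbb{R}^N)\subseteq W^{1,p}_0(\Omega;\mathbb{R}^N)$ for $1\le p<\infty$ (recall $\Omega$ is bounded), we immediately get $m_p(\xi)\le m_\infty(\xi)=\mathcal{Q}g(\xi)$. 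For the reverse inequality, the key point is that $\mathcal{Q}g$ is itself quasiconvex and, being real-valued, is locally Lipschitz by Remark~\ref{rem-qcxdef}(iii), hence it satisfies $\mathcal{Q}g(\eta)\le C'(1+|\eta|^p)$ on bounded sets; combined with $\mathcal{Q}g\le g$ and \eqref{gpgab} we see $\mathcal{Q}g$ inherits a growth bound of the form \eqref{gpgab}. Thus $\mathcal{Q}g$ is $W^{1,\infty}$-quasiconvex and, by Remark~\ref{W1pqcx}(v) (the direction using only the growth condition, testing with $C^\infty_c\subseteq W^{1,p}_0$), it satisfies $\mathcal{Q}g(\xi)\le \frac{1}{\mathcal{L}^n(\Omega)}\int_\Omega \mathcal{Q}g(\xi+D\varphi)\,dx$ for every $\varphi\in W^{1,p}_0(\Omega;\mathbb{R}^N)$. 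Since $\mathcal{Q}g\le g$, this yields $\mathcal{Q}g(\xi)\le \frac{1}{\mathcal{L}^n(\Omega)}\int_\Omega g(\xi+D\varphi)\,dx$ for all such $\varphi$, i.e. $\mathcal{Q}g(\xi)\le m_p(\xi)$. This closes the first identity.

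For the periodic formula under the additional upper semicontinuity assumption, I would argue similarly but now exploit the upper semicontinuity to pass from the $W^{1,p}_0$ infimum (or directly from the $W^{1,\infty}_{\mathrm{per}}$ formula in Lemma~\ref{lemmaenvelope}) to the $W^{1,p}_{\mathrm{per}}$ infimum. Let $n_p(\xi)$ denote the infimum over $\varphi\in W^{1,p}_{\mathrm{per}}(Q;\mathbb{R}^N)$ of $\int_Q g(\xi+D\varphi)\,dx$. One inclusion is again trivial: $W^{1,\infty}_{\mathrm{per}}(Q;\mathbb{R}^N)\subseteq W^{1,p}_{\mathrm{per}}(Q;\mathbb{R}^N)$ gives $n_p(\xi)\le \mathcal{Q}g(\xi)$ via Lemma~\ref{lemmaenvelope}. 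For the reverse inequality, given $\varphi\in W^{1,p}_{\mathrm{per}}(Q;\mathbb{R}^N)$ one mollifies $\varphi$ periodically to produce $\varphi_\delta\in W^{1,\infty}_{\mathrm{per}}(Q;\mathbb{R}^N)$ with $D\varphi_\delta\to D\varphi$ in $L^p(Q)$ and (up to a subsequence) a.e.; then the growth bound \eqref{gpgab} provides an $L^1$-dominating function of the form $C(1+2^{p-1}|\xi|^p+2^{p-1}|D\varphi_\delta|^p)$, and since the mollified gradients are bounded in $L^p$ one obtains equiintegrability, so that by upper semicontinuity of $g$ and a reverse Fatou argument $\limsup_\delta \int_Q g(\xi+D\varphi_\delta)\,dx\le \int_Q g(\xi+D\varphi)\,dx$. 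Hence $\mathcal{Q}g(\xi)\le \int_Q g(\xi+D\varphi_\delta)\,dx$ for small $\delta$ gives, in the limit, $\mathcal{Q}g(\xi)\le \int_Q g(\xi+D\varphi)\,dx$, and taking the infimum over $\varphi$ yields $\mathcal{Q}g(\xi)\le n_p(\xi)$.

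The step I expect to be the main obstacle is the reverse-Fatou/upper semicontinuity passage in the periodic formula: one needs the mollification $\varphi_\delta$ to stay periodic with the \emph{same} period cell $Q$ (straightforward, since convolution commutes with periodicity), but one must be careful that the dominating function really is integrable and that the family $\{|D\varphi_\delta|^p\}$ is equiintegrable so that reverse Fatou applies — this follows from $L^p$-boundedness of the mollified gradients, but the bookkeeping with the constant in \eqref{gpgab} and the elementary inequality $|a+b|^p\le 2^{p-1}(|a|^p+|b|^p)$ must be done cleanly. By contrast, the first identity is essentially an assembly of Lemma~\ref{lemmaenvelope} and Remark~\ref{W1pqcx}, the only subtlety being the verification that $\mathcal{Q}g$ inherits the growth bound \eqref{gpgab}, which is immediate from $\mathcal{Q}g\le g$.
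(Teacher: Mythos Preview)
Your proposal is correct and follows essentially the same route as the paper: for the first identity you sandwich the $W^{1,p}_0$ infimum between $\mathcal{Q}g(\xi)$ (using that $\mathcal{Q}g$ inherits the growth bound \eqref{gpgab} from $\mathcal{Q}g\le g$ and is therefore $W^{1,p}$-quasiconvex by Remark~\ref{W1pqcx}(v)) and the $W^{1,\infty}_0$ infimum from Lemma~\ref{lemmaenvelope}; for the periodic identity you mollify a given $\varphi\in W^{1,p}_{\rm per}$ to land in $W^{1,\infty}_{\rm per}$ and pass to the limit via the growth bound, reverse Fatou, and upper semicontinuity of $g$. The paper's write-up of the reverse Fatou step is in fact terser than yours (it just notes that \eqref{gpgab} bounds the integrals), so your explicit mention of extracting a.e.\ convergence along a subsequence and controlling $|D\varphi_\delta|^p$ is, if anything, a more careful version of the same argument.
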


\begin{proof}
The first statement follows from the previous remark, observing that under our assumptions $\mathcal Q g$ is Borel measurable, quasiconvex and satisfies the same growth condition as $g$. Indeed, by Remark \ref{W1pqcx} (v) and then by (iii) one has 
\begin{align}\label{charactW1p}
	\mathcal Q g(\xi)&\leq \inf\left\{\frac{1}{\mathcal{L}^n(\Omega)}\int_\Omega \mathcal Qg(\xi +D \varphi(x))\,dx: \varphi \in W^{1,p}_0(\Omega;\mathbb R^N)\right\}\\
	&\leq \inf\left\{\frac{1}{\mathcal{L}^n(\Omega)}\int_\Omega g(\xi +D \varphi(x))\,dx: \varphi \in W^{1,p}_0( \Omega;\mathbb R^N)\right\}\nonumber\\
	&\leq \inf\left\{\frac{1}{\mathcal{L}^n(\Omega)}\int_\Omega g(\xi +D \varphi(x))\,dx: \varphi \in W^{1,\infty}_0( \Omega;\mathbb R^N)\right\}\nonumber\\
	&= \mathcal Qg(\xi)\nonumber
\end{align}
invoking the characterization of $\mathcal Q g$ provided in Lemma \ref{lemmaenvelope} to obtain the last identity.

\noindent For the second statement, we start by observing that one inequality follows from the fact that $W^{1,\infty}_{\rm per}(Q;\mathbb{R}^N)$ $\subseteq$ $ W^{1,p}_{\rm per}(Q;\mathbb{R}^N)$ and by the characterization of $\mathcal Q g$ provided by Lemma \ref{lemmaenvelope}. For the other inequality,
consider for a fixed function $\varphi \in W^{1,p}_{\rm per}(Q;\mathbb R^N)$, the convolution with a sequence of mollifiers $(\rho_\varepsilon)_{\varepsilon}$, defined as $
		\rho_\varepsilon(x):= \frac1{\varepsilon ^n}\rho\Big(\frac x\varepsilon\Big), $
	where
	\begin{align*}
		\rho \in C_c^\infty(\mathbb R^n), \qquad {\rm supp} \rho \subset B_1,\qquad \rho\geq0,\qquad \rho(-x)=\varrho(x),
		\qquad \text{and}
		\quad \int_{B_1} \rho(x)\,d x=1,
		\end{align*}
denoting $B_1$ the unit ball in $\mathbb R^n$ centered at the origin. Note that $\varphi\ast\rho_\varepsilon$ is still a periodic function and moreover it belongs to $W^{1,\infty}(\mathbb{R}^n;\mathbb{R}^N)$. Then by the quasiconvexity of $\mathcal Q g$, one has 
$$\mathcal Q g(\xi)\le \limsup_{\varepsilon\to 0} \int_Q g(\xi+D(\varphi\ast\rho_\varepsilon)(x))\,dx.$$ Note that, by \eqref{gpgab}, the right-hand side in the previous inequality is bounded. This allows to use the reversed Fatou's lemma and get 
$$\mathcal Q g(\xi)\le \int_Q \limsup_{\varepsilon\to 0} g(\xi+D(\varphi\ast\rho_\varepsilon)(x))\,dx.$$ Finally, the upper semicontinuity assumption ensures that the integrand converges pointwise to $g(\xi+D\varphi(x))$, as desired.
\end{proof}


\section{On some convexity notions for functions and sequential weak* l.s.c. of $L^\infty$ functionals}

In this section we revisit some convexity notions previously introduced by Barron, Jensen and Wang \cite{BJW} in the context of $L^\infty$ functionals. These notions are related to the problem of existence of minimizers for supremal functionals and to the not fully understood notion of quasiconvexity for unbounded integral functionals (see \cite[Lemma 1.4]{BJW} and the last section in \cite{HK}). Our goal, in this section, is to better understand each of these notions as well as the relations between them.  The questions addressed here are motivated by the knowledge on the analogous notions in the context of integral minimization problems.

Once the convexity notions are introduced, we consider, in a first moment, some properties that are intrinsic to them. Namely, lower semicontinuity and invariance on the domain. We also get a characterization of periodic-weak Morrey quasiconvexity and existence of minimizers for some class of supremal problems. Afterwards, in Subsection \ref{subsection Hierarchies}, trying to be exhaustive, we exploit which are the conditions that are sufficient and which are necessary.

\begin{definition}\label{def convex notions}
Let $N, n \in \mathbb N$ and let $f:\mathbb{R}^{N\times n}\longrightarrow \mathbb{R}$.

\begin{enumerate}
\item The function $f$ is said to be  level convex if, for every $\xi,\eta\in\mathbb{R}^{N\times n}$ and for every $0<\lambda<1$, one has $$f(\lambda\xi+(1-\lambda)\eta)\le \max\{f(\xi),f(\eta)\}.$$
\item The function $f$ is said to be  polyquasiconvex if, there exists a level convex function $g:\mathbb{R}^{\tau(n,N)} $ $\longrightarrow \mathbb{R}$ such that, for every $\xi\in\mathbb{R}^{N\times n}$, $$f(\xi)=g(T(\xi)),$$ where
$$\tau(n,N):=\sum_{s=1}^{\min\{n,N\}}\sigma(s),\quad	\text{with}\quad \sigma(s)=\binom{N}{s}\binom{n}{s}=\frac{N!\,n!}{(s!)^2(N-s)!(n-s)!}$$ and $T(\xi)$ is a vector with all the minors of $\xi$, namely
$$T(\xi):=(\xi,\mathrm{adj}_2\xi,\dots,\mathrm{adj}_{\min\{n,N\}}\xi)$$ being $\mathrm{adj}_s\xi$ $(2\le s\le \min\{n,N\})$ the matrix of all $s\times s$ minors of $\xi$. 
\item \label{def strong}Assume that the function $f$ is Borel measurable. We say that $f$ is  strong Morrey quasiconvex if 
$$\forall\ \varepsilon>0\ \forall\ \xi\in \mathbb{R}^{N\times n}\ \forall\ K>0\ \exists\ \delta=\delta(\varepsilon, K,\xi)>0:$$
$$\left.\begin{array}{l}\varphi\in W^{1,\infty}(Q;\mathbb{R}^N)\vspace{0.2cm}\\ ||D\varphi||_{L^\infty(Q;\mathbb{R}^{N\times n})}\le K\vspace{0.2cm}\\ \max_{x\in\partial Q}|\varphi(x)|\le \delta\end{array}\right\}\Longrightarrow f(\xi)\le \operatorname*{ess\,sup}_{x\in Q} f(\xi+D\varphi(x))+\varepsilon.$$
\item\label{def weak} Assume that the function $f$ is Borel measurable. We say that $f$ is weak Morrey quasiconvex if $$\displaystyle f(\xi)\le \operatorname*{ess\,sup}_{x\in Q}f\left( \xi+ D\varphi\left( x\right)  \right),\ \forall\ \xi\in\mathbb{R}^{N\times n},\ \forall\ \varphi\in W_{0}^{1,\infty}(Q;\mathbb{R}^N).$$
\item \label{def periodic-weak} Assume that the function $f$ is Borel measurable. We say that $f$ is  periodic-weak Morrey quasiconvex if, for every $\xi\in\mathbb{R}^{N\times n}$ and for every $\varphi\in W_{\rm{per}}^{1,\infty}(Q;\mathbb{R}^N)$, $$\displaystyle f(\xi)\le \operatorname*{ess\,sup}_{x\in Q}f\left( \xi+ D\varphi\left( x\right)  \right).$$
\item The function $f$ is said to be rank-one quasiconvex if, for every $\xi,\eta\in\mathbb{R}^{N\times n}$ such that $\mathrm{rank}(\xi-\eta)=1$ and for every $0<\lambda<1$, one has $$f(\lambda\xi+(1-\lambda)\eta)\le \max\{f(\xi),f(\eta)\}.$$
\end{enumerate}
\end{definition}

\begin{remark}\label{remark periodic-weak} 
	\begin{itemize}
\item[(i)]Regarding the notion of strong Morrey quasiconvexity, to ease the parallel with the integral setting, one shall have in mind the characterization of quasiconvexity provided in Proposition \ref{new characterization of quasiconvexity}. In this way, we obtain that the supremal version of \eqref{AltQ} provides the notion of strong Morrey quasiconvexity, while the supremal version of \eqref{qcxdef} leads to weak Morrey quasiconvexity. As we shall see in Proposition \ref{proposition-counter-implications} these two notions, weak and strong Morrey quasiconvexity, do not coincide.  

\item[(ii)]We will work with the conditions defining strong Morrey quasiconvexity and weak Morrey quasiconvexity on domains other than the cube $Q$, namely $\Omega\subseteq\mathbb{R}^n$. In that case we will refer to those conditions as strong Morrey quasiconvexity in $\Omega$ or weak Morrey quasiconvexity in $\Omega$. In Propositions \ref{lemma weak independent} and \ref{proposition independence of domain} we will see these notions are independent of the domain in some appropriate classes of sets. In an analogous way, we will also refer to periodic-weak Morrey quasiconvexity in a cube $C\subseteq\mathbb{R}^n$ if the inequality in \eqref{def periodic-weak} is valid for test functions in $W^{1,\infty}_{\rm per}(C;\mathbb{R}^N).$

\item[(iii)]The definition of periodic-weak Morrey quasiconvexity is new as a definition, but it was already used in \cite[Lemma 2.8]{BJW} through a formulation, that we will prove to be equivalent in Proposition \ref{Prop equiv periodic-weak}. It appears as an intermediate step to prove that sequential weak* lower semicontinuity of a supremal functional of the form $F(\cdot,\Omega)$, as in \eqref{intro supremal funcional},  implies weak Morrey quasiconvexity. Example \ref{exampleC} below provides a counter-example to the reverse implication. The notion of periodic-weak Morrey quasiconvexity also coincides with the notion of $\mathcal{A}$-weak quasiconvexity considered in \cite{APSIMA} in the case of the ${\rm curl}$ operator, since by Proposition \ref{Prop equiv periodic-weak}, periodic-weak Morrey quasiconvexity can be tested on functions with periodic gradients.
\end{itemize}
\end{remark}

Having in mind the relevant convexity notions to treat minimization problems of integral form (see \cite{Dbook}), one should note that since convex functions are continuous in their effective domain, the lower semicontinuity is encoded in polyconvexity, quasiconvexity and rank-one convexity. This is not the case in the context under our attention here. Indeed, there are level convex, polyquasiconvex, weak and periodic-weak Morrey quasiconvex, and rank-one quasiconvex functions that are not lower semicontinuous. On the other hand, it will be seen, cf. Proposition \ref{fSMqcxOmegaflsc}, that strong Morrey quasiconvex functions are always lower semi-continuous. To this end, we start by recalling the following  preparatory result (see \cite[Proposition 2.5]{BJW}), which proof is presented for the convenience of the reader.

\begin{lemma}\label{2.5inOmegabddopen0}
	Let $f:\mathbb{R}^{N\times n}\longrightarrow \mathbb{R}$ be a Borel measurable function that is strong Morrey quasiconvex on a bounded and open set $\Omega\subseteq\mathbb{R}^n$ with Lipschitz boundary {\sl i.e.}
	\begin{equation}\label{SMQcxOmega}\forall\ \varepsilon>0\ \forall\ \xi\in \mathbb{R}^{N\times n}\ \forall\ K>0\ \exists\ \delta=\delta(\varepsilon, K,\xi)>0:$$
		$$\left.\begin{array}{l}\varphi\in W^{1,\infty}(\Omega;\mathbb{R}^N)\vspace{0.2cm}\\ ||D\varphi||_{L^\infty(\Omega;\mathbb{R}^{N\times n})}\le K\vspace{0.2cm}\\ \max_{x\in\partial \Omega}|\varphi(x)|\le \delta\end{array}\right\}\Longrightarrow f(\xi)\le \operatorname*{ess\,sup}_{x\in \Omega} f(\xi+D\varphi(x))+\varepsilon.
	\end{equation}
	Then, for every $\xi \in \mathbb R^{N\times n}$ and for every sequence $(\varphi_k)_{k\in\mathbb{N}}\subseteq W^{1,\infty}(\Omega;\mathbb R^N)$ weakly* converging to $0$ in $W^{1,\infty}(\Omega;\mathbb R^N)$, it results
	$$f(\xi)\leq \liminf_{k \to\infty}\operatorname*{ess\,sup}_{x\in \Omega} f(\xi + D \varphi_k(x)).$$
\end{lemma}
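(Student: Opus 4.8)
The plan is to feed the sequence $(\varphi_k)$ itself into the strong Morrey quasiconvexity condition \eqref{SMQcxOmega}, after checking that its members eventually satisfy the bounds required there. First I would fix $\xi\in\mathbb{R}^{N\times n}$ and $\varepsilon>0$. Since $\varphi_k\weaklystar 0$ in $W^{1,\infty}(\Omega;\mathbb{R}^N)$, the sequence is bounded in that space, so there is $K>0$, independent of $k$, with $\|D\varphi_k\|_{L^\infty(\Omega;\mathbb{R}^{N\times n})}\le K$ for all $k$. Let $\delta=\delta(\varepsilon,K,\xi)>0$ be the threshold furnished by \eqref{SMQcxOmega}; note it is a single number, since $\varepsilon$, $K$, $\xi$ are now fixed.

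The key step is to show that $\varphi_k\to 0$ uniformly on $\overline{\Omega}$. Because $\Omega$ is bounded with Lipschitz boundary, one has the compact embedding $W^{1,\infty}(\Omega;\mathbb{R}^N)\cembed C(\overline{\Omega};\mathbb{R}^N)$: the bounded sequence $(\varphi_k)$ is equi-Lipschitz on $\overline{\Omega}$, hence equibounded and equicontinuous, so Arzelà--Ascoli applies. Thus every subsequence of $(\varphi_k)$ has a further subsequence converging uniformly on $\overline{\Omega}$, and by uniqueness of the weak* limit such a uniform limit can only be $0$; a standard subsequence argument then gives $\varphi_k\to 0$ in $C(\overline{\Omega};\mathbb{R}^N)$. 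In particular $\max_{x\in\partial\Omega}|\varphi_k(x)|\le\delta$ for all $k$ large enough. (If one prefers, the same conclusion along a subsequence realizing $\liminf_k\operatorname*{ess\,sup}_{x\in\Omega}f(\xi+D\varphi_k(x))$ would already suffice.)

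With this in hand, for all large $k$ the function $\varphi_k$ is an admissible competitor in \eqref{SMQcxOmega}, so
\[
f(\xi)\le \operatorname*{ess\,sup}_{x\in\Omega} f(\xi+D\varphi_k(x))+\varepsilon .
\]
Passing to $\liminf_{k\to\infty}$ yields $f(\xi)\le\liminf_{k\to\infty}\operatorname*{ess\,sup}_{x\in\Omega} f(\xi+D\varphi_k(x))+\varepsilon$, and letting $\varepsilon\to 0^+$ gives the statement. The only point needing genuine care is upgrading weak* convergence in $W^{1,\infty}$ to uniform convergence on $\partial\Omega$, which is precisely where the Lipschitz regularity of $\partial\Omega$ is used; the rest is a mechanical application of the hypothesis.
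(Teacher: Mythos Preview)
Your proof is correct and follows essentially the same approach as the paper: bound the gradients using weak* convergence, upgrade to uniform convergence on $\overline{\Omega}$ via compactness (the paper cites Rellich--Kondrachov where you invoke Arzel\`a--Ascoli, but these amount to the same thing here), then apply \eqref{SMQcxOmega} for large $k$ and let $\varepsilon\to 0$.
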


\begin{proof}[Proof]
	The 
	weak* convergence of $\varphi_k$ to $0$ in $W^{1,\infty}(\Omega;\mathbb R^N)$ entails that $(\varphi_k)_{k\in\mathbb{N}}$ is bounded in $W^{1,\infty}(\Omega;\mathbb R^N)$. In particular, there exists $K>0$ such that $\|D\varphi_k\|_{L^\infty(\Omega;\mathbb{R}^{N\times n})}\leq K$, for every $k \in \mathbb N$. Moreover, by Rellich-Kondrachov theorem, $\varphi_k\in C(\overline{\Omega};\mathbb{R}^N)$ and $\varphi_k \to 0$ strongly in $L^\infty(\Omega;\mathbb R^N)$. Therefore, given $\xi \in \mathbb R^{N\times n}$, $\varepsilon>0$, and $\delta=\delta(\varepsilon, K,\xi)>0$ as in the assumption, one has, for sufficiently large $k$, $\sup_{x \in \partial \Omega} |\varphi_k(x)|\leq \delta(\varepsilon, K,\xi)$. Thus, the strong Morrey quasiconvexity of $f$ implies that
	$$f(\xi)\leq \operatorname*{ess\,sup}_{x\in \Omega}f(\xi + D\varphi_k)+ \varepsilon,$$
	for $k$ sufficiently large. Passing to the limit on $k$ we obtain
	$$f(\xi)\leq \liminf_{k\to\infty} \operatorname*{ess\,sup}_{x\in \Omega}f(\xi+D\varphi_k)+\varepsilon.$$
	The arbitrariness of $\varepsilon$ concludes the proof.
\end{proof}

The following example shows that lower semicontinuity is not a necessary condition of functions enjoying the other convexity notions.

\begin{example}\label{exampleB}
Let $f:\mathbb{R}^{N\times n}\longrightarrow\mathbb{R}$ be such that $f=\chi_A$ where $A=\{\xi\in\mathbb{R}^{N\times n}:\ \xi_1^1\ge 1\}$ and for every $\eta \in \mathbb R^{N\times n}$, $\eta^1_1$ denotes the first entry of the matrix $\eta$. Note that $f$ is not lower semicontinuous. Indeed, considering the sequence $(\xi_k)_{k\in\mathbb{N}}\subseteq\mathbb{R}^{N\times n}$ such that $(\xi_k)_1^1=1-\frac{1}{k}$ and all the other components are zero, one has $\lim \xi_k=\xi$ where $\xi_1^1=1$, being all the other components of $\xi$ equal to zero. However,
$$f(\xi)=1>0=\lim\inf f(\xi_k).$$
On the other hand, one can easily get that $f$ is level convex as well as polyquasiconvex, weak Morrey quasiconvex, periodic-weak Morrey quasiconvex and rank-one quasiconvex, (see also Theorem \ref{thm relations between convexity notions} below).  Observe that $f$ is not strong Morrey quasiconvex because $f$ is not lower semicontinuous, (cf. Proposition \ref{fSMqcxOmegaflsc} below                                                                         ).
\end{example}

\begin{proposition}\label{fSMqcxOmegaflsc} Let $N,n\ge 1$. 
	
	\begin{itemize}
		\item[(i)] If $f:\mathbb R^{N\times n}\longrightarrow\mathbb{R}$ is strong Morrey quasiconvex in a bounded and open set $\Omega$ with Lipschitz boundary, then it is lower semicontinuous. 
\item[(ii)]	There are  functions $f:\mathbb{R}^{N\times n}\longrightarrow\mathbb{R}$ that are not lower semicontinuous but that are either level convex, polyquasiconvex, weak Morrey quasiconvex, periodic-weak Morrey quasiconvex or rank-one quasiconvex.

\end{itemize}
\end{proposition}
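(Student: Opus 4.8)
The plan is to obtain (i) as an immediate consequence of Lemma \ref{2.5inOmegabddopen0} by testing on affine perturbations, and to dispatch (ii) by pointing to Example \ref{exampleB}, which already exhibits a function with all the required features.

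For part (i), I would fix $\xi\in\mathbb{R}^{N\times n}$ together with an arbitrary sequence $\xi_k\to\xi$ in $\mathbb{R}^{N\times n}$, and aim directly at the inequality $f(\xi)\le\liminf_{k\to\infty}f(\xi_k)$. The key device is to perturb by the affine maps $\varphi_k(x):=(\xi_k-\xi)\,x$, $x\in\Omega$, which lie in $W^{1,\infty}(\Omega;\mathbb{R}^N)$ since $\Omega$ is bounded. Because $D\varphi_k\equiv\xi_k-\xi\to 0$ and $\|\varphi_k\|_{L^\infty(\Omega;\mathbb{R}^N)}\le|\xi_k-\xi|\,\diam(\Omega)\to 0$, the sequence $(\varphi_k)_{k\in\mathbb{N}}$ converges to $0$ strongly, hence weakly*, in $W^{1,\infty}(\Omega;\mathbb{R}^N)$. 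Applying Lemma \ref{2.5inOmegabddopen0} then yields
$$f(\xi)\le\liminf_{k\to\infty}\operatorname*{ess\,sup}_{x\in\Omega}f(\xi+D\varphi_k(x))=\liminf_{k\to\infty}\operatorname*{ess\,sup}_{x\in\Omega}f(\xi_k)=\liminf_{k\to\infty}f(\xi_k),$$
the last two equalities holding because $\xi+D\varphi_k(x)=\xi_k$ is independent of $x$, so the essential supremum collapses to the single value $f(\xi_k)$. As $\xi$ was arbitrary, this is exactly lower semicontinuity of $f$.

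For part (ii), it suffices to produce one function, and I would simply invoke Example \ref{exampleB}: taking $f=\chi_A$ with $A=\{\xi\in\mathbb{R}^{N\times n}:\ \xi_1^1\ge 1\}$, the sequence $\xi_k$ with $(\xi_k)_1^1=1-\tfrac1k$ and all other entries zero shows $f(\xi)=1>0=\liminf_{k\to\infty}f(\xi_k)$, so $f$ is not lower semicontinuous; on the other hand $f$ is simultaneously level convex, polyquasiconvex, weak Morrey quasiconvex, periodic-weak Morrey quasiconvex and rank-one quasiconvex, as recorded in Example \ref{exampleB} and, more systematically, in Theorem \ref{thm relations between convexity notions}.

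I do not anticipate any genuine obstacle. Part (ii) is essentially a citation. In part (i) the only points worth checking are that the affine perturbations are admissible test functions in Lemma \ref{2.5inOmegabddopen0} — they are Lipschitz on the bounded set $\Omega$, hence belong to $W^{1,\infty}(\Omega;\mathbb{R}^N)$ — and the elementary fact that strong convergence in $W^{1,\infty}$ implies weak* convergence; the whole point of the construction is that $D\varphi_k$ is the constant matrix $\xi_k-\xi$, which turns the essential supremum in the lemma into $f(\xi_k)$.
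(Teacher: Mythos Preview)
Your proposal is correct and follows essentially the same approach as the paper: for (i) both you and the paper apply Lemma \ref{2.5inOmegabddopen0} to the affine perturbations $\varphi_k(x)=(\xi_k-\xi)x$, noting that strong convergence to $0$ in $W^{1,\infty}$ gives weak* convergence, and for (ii) both simply invoke Example \ref{exampleB}. The only cosmetic difference is that the paper phrases (i) with a sequence $\xi_k\to 0$ and concludes $f(\xi)\le\liminf_{k\to\infty}f(\xi+\xi_k)$, whereas you work directly with $\xi_k\to\xi$.
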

\begin{proof}[Proof]
The proof of (i) follows by Lemma \ref{2.5inOmegabddopen0}.	Indeed, taken a sequence $\xi_k \to 0 $ in $\mathbb R^{N \times n}$, we can define for every $x \in \Omega$, $\varphi_k(x):= \xi_k x$. This sequence of functions lying in $W^{1,\infty}(\Omega;\mathbb R^N)$ converges strongly to $0$  in $W^{1,\infty}(\Omega;\mathbb{R^N})$, hence, by the previous proposition it follows that
	$$f(\xi)\leq\liminf_{k\to \infty} f(\xi+ \xi_k).$$
	
Condition	(ii) is a consequence of Example \ref{exampleB}.
	\end{proof}

We now prove an equivalent formulation to periodic-weak Morrey quasiconvexity.

\begin{proposition}{\bf [Periodic-weak Morrey quasiconvexity.]}\label{Prop equiv periodic-weak}
Let $f:\mathbb{R}^{N\times n}\longrightarrow \mathbb{R}$ and let $C\subseteq\mathbb{R}^n$ be a cube. Then $f$ is periodic-weak Morrey quasiconvex in $C$  if and only if for every $\varphi\in W_{\rm{loc}}^{1,\infty}(\mathbb{R}^n;\mathbb{R}^N)$, such that $D\varphi$ is $C$-periodic, one has
$$\displaystyle f(\zeta)\le \operatorname*{ess\,sup}_{x\in C}f\left( D\varphi\left( x\right)  \right),$$
where $\zeta=\frac{1}{\mathcal{L}^n(C)}\int_C D\varphi(x)\,dx$.
\end{proposition}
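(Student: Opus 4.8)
The plan is to establish the two implications separately, exploiting the affine change of variables that sends the cube $C$ to the unit cube and rescales periodicity, together with the elementary observation that for $\varphi \in W^{1,\infty}_{\rm loc}(\mathbb{R}^n;\mathbb{R}^N)$ with $C$-periodic gradient, the map $\psi(x) := \varphi(x) - \zeta x$ has $C$-periodic gradient with $\int_C D\psi = 0$, hence (after subtracting a constant) $\psi$ is itself $C$-periodic, i.e. $\psi \in W^{1,\infty}_{\rm per}(C;\mathbb{R}^N)$. The essential supremum is insensitive to the additive constant, so $\operatorname*{ess\,sup}_{x\in C} f(D\varphi(x)) = \operatorname*{ess\,sup}_{x\in C} f(\zeta + D\psi(x))$.

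First I would prove the ``if'' direction. Assume the stated inequality holds for all $\varphi \in W^{1,\infty}_{\rm loc}(\mathbb{R}^n;\mathbb{R}^N)$ with $C$-periodic gradient. Given $\xi \in \mathbb{R}^{N\times n}$ and $\eta \in W^{1,\infty}_{\rm per}(C;\mathbb{R}^N)$, set $\varphi(x) := \xi x + \eta(x)$. Then $D\varphi = \xi + D\eta$ is $C$-periodic, and since $\int_C D\eta(x)\,dx = 0$ (a periodic Lipschitz function on a cube has gradient of zero mean, by the divergence theorem applied componentwise), we get $\zeta = \frac{1}{\mathcal{L}^n(C)}\int_C D\varphi(x)\,dx = \xi$. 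The hypothesis then gives $f(\xi) = f(\zeta) \le \operatorname*{ess\,sup}_{x\in C} f(D\varphi(x)) = \operatorname*{ess\,sup}_{x\in C} f(\xi + D\eta(x))$, which is exactly periodic-weak Morrey quasiconvexity in $C$.

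Next I would prove the ``only if'' direction, which is the one carrying the real content. Assume $f$ is periodic-weak Morrey quasiconvex in $C$, and let $\varphi \in W^{1,\infty}_{\rm loc}(\mathbb{R}^n;\mathbb{R}^N)$ have $C$-periodic gradient; put $\zeta := \frac{1}{\mathcal{L}^n(C)}\int_C D\varphi(x)\,dx$. Define $\psi(x) := \varphi(x) - \zeta x$. Its gradient $D\psi = D\varphi - \zeta$ is $C$-periodic with zero mean over $C$, so $\psi$ differs from a $C$-periodic function by a constant; replacing $\psi$ by $\psi$ minus that constant, we may assume $\psi \in W^{1,\infty}_{\rm per}(C;\mathbb{R}^N)$ (the constant does not affect $D\psi$, hence not the conclusion). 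Applying the definition of periodic-weak Morrey quasiconvexity with this test function $\psi$ and the matrix $\zeta$ yields $f(\zeta) \le \operatorname*{ess\,sup}_{x\in C} f(\zeta + D\psi(x)) = \operatorname*{ess\,sup}_{x\in C} f(D\varphi(x))$, as desired.

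The main obstacle — really the only point requiring care rather than routine verification — is the claim that a $W^{1,\infty}_{\rm loc}$ function with $C$-periodic gradient and zero mean gradient over $C$ coincides, up to an additive constant, with a genuinely $C$-periodic function; equivalently, that $\psi(x+\ell) - \psi(x)$ is independent of $x$ for each period vector $\ell$ and vanishes when the mean of $D\psi$ vanishes. This follows by integrating $D\psi$ along segments and using periodicity of $D\psi$ to see that the increment over a period equals $\int$ of $D\psi$ over one period cell (hence zero under the mean condition); I would state it as a short lemma or cite the already-recorded fact in Remark \ref{rem-qcxdef}(vi), where exactly the class $\{\varphi : D\varphi \text{ is } Q\text{-periodic}, \int_Q D\varphi = 0\}$ appears interchangeably with $W^{1,\infty}_{\rm per}$. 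Everything else is a direct substitution, noting throughout that $\operatorname*{ess\,sup}$ is translation- and constant-invariant.
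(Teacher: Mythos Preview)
Your proposal is correct and follows essentially the same approach as the paper: both subtract the affine part $\zeta x$ from $\varphi$ to obtain a function whose gradient is $C$-periodic with zero mean, observe that such a function is (up to an additive constant) genuinely $C$-periodic, and then apply the definition of periodic-weak Morrey quasiconvexity. Your write-up is in fact more detailed than the paper's, which treats the ``if'' direction as trivial and simply cites an external reference for the periodicity claim you justify explicitly.
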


\begin{proof}
For the non-trivial implication, let $\varphi\in W_{\rm{loc}}^{1,\infty}(\mathbb{R}^n;\mathbb{R}^N)$ be a function with $C$-periodic gradient and let $\zeta=\frac{1}{\mathcal{L}^n(C)}\int_C D\varphi(x)\,dx$. By an argument of \cite{Fonseca-Leoni-unpub}, the function defined in $C$ by
$w(x):=\varphi(x)-\zeta\cdot x$ can be extended by $C$-periodicity to an element in $W^{1,\infty}_{\rm per}(C;\mathbb{R}^N)$. Using the hypothesis of periodic-weak Morrey quasiconvexity in $C$, one gets 
$$\displaystyle f(\zeta)\le \operatorname*{ess\,sup}_{x\in C}f\left( \zeta+ Dw\left( x\right)  \right)=\operatorname*{ess\,sup}_{x\in C}f\left( D\varphi(x) \right),$$ as whished. 
\end{proof}

Next we are going to address the question of independence of domain in the notions of weak Morrey quasiconvexity and strong Morrey quasiconvexity. We observe that the class of sets that we can achieve in an equivalent notion of weak Morrey quasiconvexity is more 
general than for strong Morrey quasiconvexity. Actually, while in the first  setting (cf. Proposition \ref{lemma weak independent}) the argument relies on Vitali's covering argument, in the second one, the proof of Proposition \ref{proposition independence of domain} exploits the lower semicontinuity of the associated supremal functional as a necesssary condition to the strong Morrey quasiconvexity of the supremand (cf. Proposition \ref{Strong Morrey implies seq weak* lsc}) that involves a strong version of Besicovitch derivation theorem constraining the class of admissible sets.

\begin{proposition}\label{lemma weak independent}
	The notion of weak Morrey quasiconvexity remains unchangeable if the set $Q$ is replaced by any other bounded and open set of $\mathbb{R}^n$ with boundary of null $\mathcal{L}^n$-measure.  
\end{proposition}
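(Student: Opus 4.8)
The plan is to reduce the proposition to a single rescale‑and‑glue argument. For a bounded open set $A\subseteq\mathbb{R}^n$ say, as in Remark~\ref{remark periodic-weak}(ii), that $f$ is \emph{weak Morrey quasiconvex in $A$} when $f(\xi)\le\operatorname*{ess\,sup}_{x\in A}f(\xi+D\varphi(x))$ for every $\xi\in\mathbb{R}^{N\times n}$ and every $\varphi\in W^{1,\infty}_0(A;\mathbb{R}^N)$. The heart of the matter is the following assertion: \emph{if $A,B\subseteq\mathbb{R}^n$ are bounded open sets and there are countably many pairwise disjoint cells $B_j:=x_j+r_jB\subseteq A$ (with $x_j\in\mathbb{R}^n$ and $r_j>0$) such that $\mathcal{L}^n\bigl(A\setminus\bigcup_j B_j\bigr)=0$, then weak Morrey quasiconvexity of $f$ in $A$ implies weak Morrey quasiconvexity of $f$ in $B$.} Granting this, the proposition follows: given a bounded open $\Omega$ with $\mathcal{L}^n(\partial\Omega)=0$, the Vitali covering argument recalled in Remark~\ref{rem-qcxdef}(iv) (the one underlying the domain‑independence of quasiconvexity, \cite[Proposition~5.11]{Dbook}) produces such a filling of $Q$ by rescaled copies of $\Omega$ (here one uses $\mathcal{L}^n(\partial\Omega)=0$) and, symmetrically, a filling of $\Omega$ by rescaled copies of $Q$; applying the assertion with $(A,B)=(Q,\Omega)$ and with $(A,B)=(\Omega,Q)$ yields the two implications, hence the equivalence.

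To prove the assertion, fix $\xi\in\mathbb{R}^{N\times n}$ and $\varphi\in W^{1,\infty}_0(B;\mathbb{R}^N)$ and set
$$\Phi(x):=r_j\,\varphi\!\left(\tfrac{x-x_j}{r_j}\right)\ \text{ for }x\in B_j,\qquad \Phi(x):=0\ \text{ for }x\in A\setminus\bigcup_j B_j.$$
Each piece $\varphi_j:=\Phi\,\chi_{B_j}$ lies in $W^{1,\infty}_0(B_j;\mathbb{R}^N)$ — it vanishes on $\partial B_j$ because $\varphi$ vanishes on $\partial B$ — with $\|\varphi_j\|_{L^\infty}\le r_j\|\varphi\|_{L^\infty}$, $\|D\varphi_j\|_{L^\infty}\le\|D\varphi\|_{L^\infty}$, and $D\varphi_j(x)=D\varphi\bigl(\tfrac{x-x_j}{r_j}\bigr)$ for a.e.\ $x\in B_j$. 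Since the cells are pairwise disjoint and $\sum_j\mathcal{L}^n(B_j)=\mathcal{L}^n\bigl(\bigcup_j B_j\bigr)\le\mathcal{L}^n(A)<\infty$, the series $\sum_j\varphi_j$ converges absolutely in $W^{1,1}(A;\mathbb{R}^N)$ with sum $\Phi$; as every partial sum belongs to $W^{1,1}_0(A;\mathbb{R}^N)$, so does $\Phi$, and its distributional gradient is the obvious piecewise one, in particular bounded. Hence $\Phi\in W^{1,\infty}_0(A;\mathbb{R}^N)$. Applying weak Morrey quasiconvexity of $f$ in $A$ to $\Phi$, and changing variables $y=\tfrac{x-x_j}{r_j}$ cell by cell,
$$f(\xi)\le\operatorname*{ess\,sup}_{x\in A}f\bigl(\xi+D\Phi(x)\bigr)=\sup_j\,\operatorname*{ess\,sup}_{x\in B_j}f\!\left(\xi+D\varphi\bigl(\tfrac{x-x_j}{r_j}\bigr)\right)=\operatorname*{ess\,sup}_{y\in B}f\bigl(\xi+D\varphi(y)\bigr),$$
the $\mathcal{L}^n$‑null leftover set being discarded since it does not affect the essential supremum. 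This is exactly weak Morrey quasiconvexity of $f$ in $B$, proving the assertion.

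I expect the delicate point to be the gluing step, namely checking that $\Phi$ — obtained by juxtaposing infinitely many rescaled copies of $\varphi$ on cells that may accumulate both against $\partial A$ and along the null leftover set — really belongs to $W^{1,\infty}_0(A;\mathbb{R}^N)$, with no spurious singular part of the distributional gradient concentrated on $\bigcup_j\partial B_j$ (which is itself $\mathcal{L}^n$‑null once the cells are disjoint). This is precisely the mechanism behind the classical proof that quasiconvexity does not depend on the domain; passing from the integral average used there to the essential supremum here actually lightens the bookkeeping, as $\mathcal{L}^n$‑null sets are simply ignored. The only other ingredient borrowed from that circle of ideas is the Vitali‑type filling of a bounded open set by pairwise disjoint rescaled copies of a bounded open set with null boundary.
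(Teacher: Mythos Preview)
Your proof is correct and follows essentially the same route as the paper's: cover one domain up to a null set by pairwise disjoint rescaled copies of the other via a Vitali-type argument, transplant the test function onto each cell, and compare essential suprema. The paper states this as a single implication (from a generic open $O$ to any bounded open $\Omega$ with $\mathcal{L}^n(\partial\Omega)=0$), which yields both directions at once since $Q$ also has null boundary; your packaging via the auxiliary ``assertion'' applied with $(A,B)=(Q,\Omega)$ and $(A,B)=(\Omega,Q)$ is equivalent. Your justification that $\Phi\in W^{1,\infty}_0(A;\mathbb{R}^N)$ through $W^{1,1}$-convergence of the partial sums is a welcome elaboration of a step the paper leaves implicit.
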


\begin{proof}
	We need to show that, if $f:\mathbb{R}^{N\times n}\longrightarrow \mathbb{R}$ is a Borel measurable function such that 
	$$\displaystyle f(\xi)\le \operatorname*{ess\,sup}_{x\in O}f\left( \xi+ D\varphi\left( x\right)  \right),\ \forall\ \xi\in\mathbb{R}^{N\times n},\ \forall\ \varphi\in W_{0}^{1,\infty}(O;\mathbb{R}^N)$$ where $O$ is a given open set in $\mathbb{R}^n$, then, for any bounded open set $\Omega$ of $\mathbb{R}^n$ whose boundary has null $\mathcal L^n$-measure, one has
	$$\displaystyle f(\xi)\le \operatorname*{ess\,sup}_{x\in \Omega}f\left( \xi+ D\psi\left( x\right)  \right),\ \forall\ \xi\in\mathbb{R}^{N\times n},\ \forall\ \psi\in W_{0}^{1,\infty}(\Omega;\mathbb{R}^N).$$
	
	Let $O$ and $f$ be as above and let $\Omega$ be a bounded and open set with $\mathcal{L}^n(\partial\Omega)=0$. Let $\xi\in\mathbb{R}^{N\times n}$ and $\psi\in W_{0}^{1,\infty}(\Omega;\mathbb{R}^N).$ Without loss of generality, assume $\Omega$ is connected, otherwise consider one of its connected components.
	
	Let $x_0\in\Omega$ and define $\Omega_0:=\Omega-x_0$. Consider, $\mathcal{G}$, the collection of open sets $a+\varepsilon\,\Omega_0$, for $a\in\mathbb{R}^n$ and $\varepsilon>0$. By Vitali covering theorem (see \cite[Corollary 10.5]{DacMar}), up to a set of measure zero, the set $O$ can be covered with a countable number of sets $G\in\mathcal{G}$ with disjoint closures. More precisely, for some countable collection $\mathcal{G}'\subseteq\mathcal{G}$,  
	$$\bigcup_{G\in\mathcal{G}'}G\subseteq O,\qquad \mathcal{L}^n\left(O\setminus\bigcup_{G\in\mathcal{G}'}G\right)=0,$$ and $G\cap F=\emptyset$ for $G,F\in \mathcal{G}'$ with $G\neq F$.
	
	Each set $G\in\mathcal{G}'$ has the form $a+\varepsilon\,\Omega_0$. On each of these sets, define a function $\psi_{a,\varepsilon}$ as $$\psi_{a,\varepsilon}(y)=\varepsilon\psi\left(\frac{y-a}{\varepsilon}+x_0\right).$$ Observe that $$D\psi_{a,\varepsilon}(y)=D\psi\left(\frac{y-a}{\varepsilon}+x_0\right).$$ Patching these functions together, we construct a function $\varphi$ defined on $O$ as 
	$\varphi=\psi_{a,\varepsilon}$ in each $a+\varepsilon\,\Omega_0\in\mathcal{G}'$, and $\varphi=0$ in $O\setminus \cup_{G\in\mathcal{G}'}G$. In this way, one gets $\varphi\in W_{0}^{1,\infty}(O;\mathbb{R}^N)$, and thus, using the hypothesis, we get
	$$\displaystyle f(\xi)\le \operatorname*{ess\,sup}_{y\in O}f\left( \xi+ D\varphi\left( y\right)  \right)\le \operatorname*{ess\,sup}_{x\in \Omega}f\left( \xi+ D\psi\left( x\right)  \right),$$ as desired.
\end{proof}

Proposition \ref{lemma weak independent} provides an answer to the question raised in the introduction regarding the minimization of some supremal functionals on a set of functions with a prescribed affine boundary condition. This is stated in the next corollary that is an immediate consequence of the previous result.

\begin{corollary}\label{Corollary 3.8}
	Let $\Omega$ be a bounded open set with boundary of null $\mathcal{L}^n$-measure and let $f:\mathbb{R}^{N\times n}\longrightarrow\mathbb{R}$ be a Borel measurable function. Consider the functional $$I(u)=\operatorname*{ess\,sup}_{x\in \Omega} f(Du(x)), \text{ for } u\in W^{1,\infty}(\Omega;\mathbb{R}^N).$$ Let $\xi\in\mathbb{R}^{N\times n}$ and denote by $u_\xi$ an affine function with gradient $\xi$.
	
	If $f$ is weak Morrey quasiconvex, then $u_\xi$ minimizes $I$ on $u_\xi+W_0^{1,\infty}(\Omega;\mathbb{R}^N)$. 
\end{corollary}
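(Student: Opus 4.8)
The plan is to reduce the statement to the very definition of weak Morrey quasiconvexity. Fix $u\in u_\xi+W_0^{1,\infty}(\Omega;\mathbb{R}^N)$; I must show $I(u_\xi)\le I(u)$, that is
$$f(\xi)=\operatorname*{ess\,sup}_{x\in\Omega}f(Du_\xi(x))\le\operatorname*{ess\,sup}_{x\in\Omega}f(Du(x))=I(u).$$
Write $\psi:=u-u_\xi$, so that $\psi\in W_0^{1,\infty}(\Omega;\mathbb{R}^N)$ and $Du(x)=\xi+D\psi(x)$ a.e.\ in $\Omega$. Then the claim becomes exactly
$$f(\xi)\le\operatorname*{ess\,sup}_{x\in\Omega}f(\xi+D\psi(x)),$$
which is the weak Morrey quasiconvexity inequality tested on $\psi$ — but stated on the domain $\Omega$ rather than on the unit cube $Q$.

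The only gap is therefore passing from the cube $Q$ in Definition \ref{def convex notions}\eqref{def weak} to the general bounded open set $\Omega$. Since $\Omega$ has boundary of null $\mathcal{L}^n$-measure, this is precisely the content of Proposition \ref{lemma weak independent}: weak Morrey quasiconvexity of $f$ on $Q$ is equivalent to the corresponding inequality on $\Omega$, for all $\xi\in\mathbb{R}^{N\times n}$ and all test functions in $W_0^{1,\infty}(\Omega;\mathbb{R}^N)$. Applying that equivalence with this particular $\xi$ and $\psi$ closes the argument immediately. So the proof is: invoke Proposition \ref{lemma weak independent} to get weak Morrey quasiconvexity in $\Omega$, then apply it to $\psi=u-u_\xi$.

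There is essentially no obstacle here — the corollary is a one-line consequence once Proposition \ref{lemma weak independent} is in hand, which is why the authors state it as ``an immediate consequence of the previous result''. The one point deserving a word of care is that $I$ is genuinely well defined on $u_\xi+W_0^{1,\infty}(\Omega;\mathbb{R}^N)$ and that ``$u_\xi$ minimizes $I$'' means $I(u_\xi)\le I(u)$ for every competitor $u$ in that affine class; the displayed chain above is uniform in $u$, so the minimality is established for all competitors simultaneously. No measurability or regularity subtleties arise beyond what is already built into the hypotheses ($f$ Borel measurable, $\Omega$ with null-measure boundary), since Borel measurability of $f$ and the $L^\infty$ bounds make $x\mapsto f(Du(x))$ measurable and essentially bounded, so all the $\operatorname*{ess\,sup}$'s are finite real numbers.
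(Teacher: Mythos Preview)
Your proof is correct and follows exactly the paper's approach: the authors state the corollary as ``an immediate consequence of the previous result'' (Proposition \ref{lemma weak independent}), and you carry this out explicitly by writing $\psi=u-u_\xi\in W_0^{1,\infty}(\Omega;\mathbb{R}^N)$ and invoking the domain-independence of weak Morrey quasiconvexity. One small quibble: your final remark that the essential suprema are necessarily finite is not justified (nothing prevents $f$ from being unbounded on the bounded range of $Du$), but this is irrelevant to the argument since $I(u_\xi)=f(\xi)\in\mathbb{R}$ and the inequality $f(\xi)\le I(u)$ holds regardless of whether $I(u)$ is finite.
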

 
Next we address the invariance of domain in the notion of strong Morrey quasiconvexity. We start with translation of sets.

\begin{remark}\label{StrongMorreytrans}
	If $f:\mathbb{R}^{N\times n}\longrightarrow \mathbb{R}$ is strong Morrey quasiconvex in a bounded open set $\Omega\subseteq\mathbb{R}^n$ 
	then, it is also strong Morrey quasiconvex in any translation of $\Omega$. That is, for any $x_0\in\mathbb{R}^n$
	\begin{equation}\label{translfSMQcx}
		\forall\ \varepsilon>0\ \forall\ \xi\in \mathbb{R}^{N\times n}\ \forall\ K>0\ \exists\ \delta=\delta(\varepsilon, K,\xi)>0:$$
		$$\left.\begin{array}{l}\varphi\in W^{1,\infty}(x_0+\Omega;\mathbb{R}^N)\vspace{0.2cm}\\ ||D\varphi||_{L^\infty(x_0+\Omega;\mathbb{R}^{N\times n})}\le K\vspace{0.2cm}\\ \max_{x\in\partial (x_0+\Omega)}|\varphi(x)|\le \delta\end{array}\right\}\Longrightarrow f(\xi)\le \operatorname*{ess\,sup}_{x\in x_0+\Omega} f(\xi+D\varphi(x))+\varepsilon,
	\end{equation}
	where $x_0+\Omega$ denotes the translation of the set $\Omega$ by the vector $x_0$. Observe that $\partial (x_0+\Omega)=x_0+\partial \Omega$ and that $x_0 +\Omega$ is also bounded and open. Also, observe that for every function $\varphi \in W^{1,\infty}(x_0+\Omega;\mathbb R^N)$, defining $\psi(x):= \varphi(x_0+x)$ for $x \in \Omega$, one has $\psi \in W^{1,\infty}(\Omega;\mathbb R^N)$.
	If $\|D \varphi\|_{L^\infty(x_0+\Omega;\mathbb{R}^{N\times n})}\leq K$, it results
	$$\|D \psi\|_{L^\infty(\Omega;\mathbb{R}^{N\times n})}\|=\|D \varphi\|_{L^\infty(x_0+\Omega;\mathbb{R}^{N\times n})}\leq K,$$
	and, if $\max_{x\in\partial (x_0+\Omega)}|\varphi(x)|\le \delta$, then
	$$\max_{x \in \partial \Omega}|\psi(x)|=\max_{x \in \partial(x_0+\Omega)}|\varphi(x)|\leq \delta.$$
	Thus,  by \eqref{SMQcxOmega}, we have for every $\varepsilon>0,$ $\xi\in\mathbb R^{N\times n}$ , $K>0$ that there exists a $\delta\equiv \delta(\varepsilon, K, \xi)>0$ such that
	$$f(\xi)\leq\operatorname*{ess\,sup}_{x\in \Omega} f(\xi+D\psi(x))+\varepsilon= \operatorname*{ess\,sup}_{x\in x_0+\Omega} f(\xi+D\varphi(x))+\varepsilon,$$
	thus proving \eqref{translfSMQcx}.
\end{remark}  

Our next goal is to address more general cases. On the one hand, it is hard to deal with the strong Morrey quasiconvexity notion directly. But, on the other hand, we can relate it with lower semicontinuity of supremal functionals, independently of their domain of definition, (see Propositions \ref{Strong Morrey implies seq weak* lsc} and \ref{seq weak* lsc implies strong Morrey} in the Appendix). Therefore to achieve our goal we pass through properties of supremal functionals.


\begin{proposition}\label{proposition independence of domain}
	In the notion of strong Morrey quasiconvexity, the set $Q$ can be replaced by any other bounded, open, and convex set of $\mathbb{R}^n$.
\end{proposition}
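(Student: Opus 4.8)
The plan is to route the argument through the sequential weak* lower semicontinuity of the supremal functional attached to $f$, which is the place where the domain dependence dissolves. I shall use two results that we prove, by adapting \cite{BJW}, in Appendix A. The first, Proposition \ref{Strong Morrey implies seq weak* lsc}, asserts that if $f$ is strong Morrey quasiconvex on one bounded open set with Lipschitz boundary, then $F(u,\Omega')=\esssup_{x\in\Omega'}f(Du(x))$ is sequentially weak* lower semicontinuous on $W^{1,\infty}(\Omega';\mathbb R^N)$ for every bounded open $\Omega'$ with Lipschitz boundary; in particular this lower semicontinuity does not privilege any domain. The second, Proposition \ref{seq weak* lsc implies strong Morrey}, is the converse statement restricted to good domains: if $\Omega'\subseteq\mathbb R^n$ is bounded, open and convex and $F(\cdot,\Omega')$ is sequentially weak* lower semicontinuous on $W^{1,\infty}(\Omega';\mathbb R^N)$, then $f$ is strong Morrey quasiconvex on $\Omega'$.

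Granting these two facts the proof is short, since every bounded open convex set — and, in particular, the cube $Q$ — has Lipschitz boundary. Fix a bounded open convex $\Omega\subseteq\mathbb R^n$. If $f$ is strong Morrey quasiconvex (that is, on $Q$), Proposition \ref{Strong Morrey implies seq weak* lsc} gives sequential weak* lower semicontinuity of $F(\cdot,\Omega)$, and then Proposition \ref{seq weak* lsc implies strong Morrey}, applied to the convex set $\Omega$, yields strong Morrey quasiconvexity of $f$ on $\Omega$. Conversely, if $f$ is strong Morrey quasiconvex on $\Omega$, Proposition \ref{Strong Morrey implies seq weak* lsc} — whose hypothesis is met by the bounded open Lipschitz set $\Omega$ — gives sequential weak* lower semicontinuity of $F(\cdot,Q)$, and Proposition \ref{seq weak* lsc implies strong Morrey} with the convex set $Q$ returns strong Morrey quasiconvexity on $Q$. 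Thus the two conditions are equivalent, which is the assertion; together with Remark \ref{StrongMorreytrans} this also makes the notion manifestly invariant under translations.

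All the work, and the sole place where convexity is genuinely used, is therefore Proposition \ref{seq weak* lsc implies strong Morrey}. To produce strong Morrey quasiconvexity on $\Omega$ out of weak* lower semicontinuity of $F(\cdot,\Omega)$ one fixes $\varepsilon>0$, $\xi\in\mathbb R^{N\times n}$, $K>0$ and a $\delta=\delta(\varepsilon,K,\xi)$ to be chosen, takes $\varphi\in W^{1,\infty}(\Omega;\mathbb R^N)$ with $\|D\varphi\|_{L^\infty}\le K$ and $\max_{\partial\Omega}|\varphi|\le\delta$, and must manufacture a sequence $u_k\weaklystar u_\xi$ in $W^{1,\infty}(\Omega;\mathbb R^N)$ — with $u_\xi$ affine of gradient $\xi$ — along which $F(u_k,\Omega)\le\esssup_\Omega f(\xi+D\varphi)+\varepsilon$, so that lower semicontinuity forces $f(\xi)=F(u_\xi,\Omega)\le\esssup_\Omega f(\xi+D\varphi)+\varepsilon$. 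Such $u_k$ is built by covering $\Omega$, up to an $\mathcal L^n$-null set, by countably many pairwise disjoint homothetic copies of $\Omega$ of diameter tending to $0$, placing on each a rescaled copy of $u_\xi+\varphi$ corrected in a thin layer near its boundary so as to match $u_\xi$ at the interfaces — the bound $\max_{\partial\Omega}|\varphi|\le\delta$ being what keeps this correction, hence its contribution to $\|Du_k\|_{L^\infty}$ and to $\esssup f(Du_k)$, small — and setting $u_k=u_\xi$ on the leftover. The hard part, and the reason the statement is restricted to convex sets (in contrast with the unconditional result for weak Morrey quasiconvexity in Proposition \ref{lemma weak independent}, where the competitors vanish on the boundary and no gluing correction arises), is to keep $\esssup f(Du_k)$ under control along this construction: this requires a strong form of the Besicovitch covering/derivation theorem for the family of homothetic copies of $\Omega$, available because $\Omega$ is a bounded convex body — so that its homothets form a Vitali class of uniformly bounded eccentricity — and failing for a general bounded open $\Omega$.
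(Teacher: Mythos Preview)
Your high-level strategy is exactly the paper's: pass through sequential weak* lower semicontinuity of the associated supremal functional, using Propositions~\ref{Strong Morrey implies seq weak* lsc} and~\ref{seq weak* lsc implies strong Morrey} from Appendix~A. However, you have swapped the hypotheses of the two appendix results, and this leads you to a sketch that does not work.

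In the paper, convexity is required in Proposition~\ref{Strong Morrey implies seq weak* lsc} (strong Morrey quasiconvexity on $\Omega$ $\Rightarrow$ lower semicontinuity of $F(\cdot,O)$ for every bounded open $O$), not in Proposition~\ref{seq weak* lsc implies strong Morrey}. The reason is that the proof of the former follows the blow-up argument of \cite[Theorem~2.6]{BJW}, which needs to differentiate along a family of shrinking homothetic copies of $\Omega$; this is where the strong Besicovitch derivation theorem \cite[Theorem~5.52]{AFP} enters, and it is there that convexity of $\Omega$ is used. By contrast, Proposition~\ref{seq weak* lsc implies strong Morrey} (lsc of $F(\cdot,\Omega)$ $\Rightarrow$ strong Morrey quasiconvexity on $\Omega$) is proved by a short contradiction argument, as in \cite[Theorem~2.7]{BJW}: if the defining condition fails at some $(\varepsilon,\xi,K)$, one obtains a sequence $\varphi_\delta$ with $\|D\varphi_\delta\|_{L^\infty}\le K$ and $\max_{\partial\Omega}|\varphi_\delta|\le\delta\to 0$, which by Ascoli--Arzel\`a converges weakly* to some $\varphi\in W^{1,\infty}_0(\Omega;\mathbb R^N)$; lower semicontinuity then contradicts weak Morrey quasiconvexity (already secured by Propositions~\ref{seq weak* lsc implies weak Morrey} and~\ref{lemma weak independent}). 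This only needs $\Omega$ bounded open with Lipschitz boundary.

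Your sketched direct construction for the implication lsc $\Rightarrow$ strong Morrey quasiconvexity --- tile $\Omega$ by homothetic copies, place rescaled copies of $u_\xi+\varphi$ and correct near the interfaces --- does not work for supremal functionals. The correction layer, however thin, carries gradients that lie in a ball of radius of order $K$ around $\xi$ but are \emph{not} of the form $\xi+D\varphi(y)$; since the functional is an essential supremum, its value on $u_k$ picks up $\esssup$ of $f$ over these extraneous gradients, which is not bounded by $\esssup_\Omega f(\xi+D\varphi)+\varepsilon$ (contrast the integral case, Proposition~\ref{new characterization of quasiconvexity}, where the layer contributes a term small in \emph{measure}). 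So the covering/correction idea is placed on the wrong side and, on that side, fails.
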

\begin{proof}[Proof] Assume that $f$ is strong Morrey quasiconvex in a bounded, convex, and open set $\Omega\subseteq\mathbb{R}^n$. By Proposition \ref{Strong Morrey implies seq weak* lsc} one has that $F(u,O):=\operatorname*{ess\,sup}_{x\in \Omega}f\left(Du\left( x\right)  \right)$ is sequentially weakly* l.s.c. in $W^{1,\infty}(O;\mathbb{R}^N)$ for any bounded and open set $O\subseteq\mathbb{R}^n$. Then it is enough to invoke Proposition \ref{seq weak* lsc implies strong Morrey} to conclude that $f$ is also strong Morrey quasiconvex in $O$. 
	\hfill
\end{proof}

As a side result, we also get that the sequential weak* lower semicontinuity of $F$ is independent of the domain $\Omega$ in the class of bounded, open and convex sets. 

\begin{proposition}\label{proposition independence for functionals} Let $f:\mathbb{R}^{N\times n}\longrightarrow\mathbb{R}$ be a Borel measurable function and consider the functional
	$$F(u,\Omega):=\operatorname*{ess\,sup}_{x\in \Omega}f\left(Du\left( x\right) \right),$$ where $\Omega$ is a bounded open set of $\mathbb{R}^n$ and $u\in W^{1,\infty}(\Omega;\mathbb{R}^N)$. 
	
If $F(\cdot,\mathcal{O}_1)$ is sequentially weakly* lower semicontinuous in $W^{1,\infty}(\mathcal{O}_1;\mathbb{R}^N)$ with $\mathcal{O}_1$ bounded, convex, and open, then $F(\cdot,\mathcal{O}_2)$ is sequentially weakly* lower semicontinuous in $W^{1,\infty}(\mathcal{O}_2;\mathbb{R}^N)$ for any bounded open set $\mathcal{O}_2$. 
	
In particular, the sequential weak* lower semicontinuity of $F(\cdot,\Omega)$ is independent of the domain $\Omega$ in the class of bounded, convex, and open sets.
\end{proposition}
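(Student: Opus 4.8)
The plan is to reduce the statement to the two appendix results relating strong Morrey quasiconvexity of $f$ and sequential weak* lower semicontinuity of the supremal functional $F$, namely Propositions \ref{Strong Morrey implies seq weak* lsc} and \ref{seq weak* lsc implies strong Morrey}, together with Proposition \ref{proposition independence of domain}. No independent argument is attempted: the content is simply a chain of implications, since the analytic work is already contained in those results.

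First I would start from the hypothesis that $F(\cdot,\mathcal{O}_1)$ is sequentially weakly* lower semicontinuous in $W^{1,\infty}(\mathcal{O}_1;\mathbb{R}^N)$ and apply Proposition \ref{seq weak* lsc implies strong Morrey} to obtain that $f$ is strong Morrey quasiconvex in $\mathcal{O}_1$. This is exactly the step where the convexity (hence Lipschitz regularity) of $\mathcal{O}_1$ is needed, since the derivation of strong Morrey quasiconvexity from lower semicontinuity rests on the strengthened Besicovitch covering/derivation argument underlying that proposition, which is only available for a restricted class of domains. If one prefers to phrase strong Morrey quasiconvexity on the reference cube, Proposition \ref{proposition independence of domain} shows that strong Morrey quasiconvexity in the bounded, open, convex set $\mathcal{O}_1$ is equivalent to strong Morrey quasiconvexity (i.e.\ in $Q$); but this intermediate step can also be bypassed.

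Next, since $f$ is strong Morrey quasiconvex in the bounded, open, convex set $\mathcal{O}_1$, Proposition \ref{Strong Morrey implies seq weak* lsc} yields that $F(\cdot,\mathcal{O}_2)$ is sequentially weakly* lower semicontinuous in $W^{1,\infty}(\mathcal{O}_2;\mathbb{R}^N)$ for every bounded open set $\mathcal{O}_2\subseteq\mathbb{R}^n$, which is the first assertion. For the ``in particular'' clause, one observes that when both $\mathcal{O}_1$ and $\mathcal{O}_2$ are bounded, open, and convex, the argument just given applies with the roles of $\mathcal{O}_1$ and $\mathcal{O}_2$ interchanged, so sequential weak* lower semicontinuity of $F(\cdot,\mathcal{O}_1)$ is equivalent to that of $F(\cdot,\mathcal{O}_2)$.

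The only point requiring care --- and the main (mild) obstacle --- is matching the admissible classes of domains in the cited propositions: Proposition \ref{seq weak* lsc implies strong Morrey} produces strong Morrey quasiconvexity from lower semicontinuity only on sufficiently regular domains (so it must be invoked with the convex $\mathcal{O}_1$, not with a general target), whereas Proposition \ref{Strong Morrey implies seq weak* lsc} conversely delivers lower semicontinuity on an arbitrary bounded open $\mathcal{O}_2$ once $f$ is strong Morrey quasiconvex on a convex set. Since all cited results precede this statement, there is no circularity, and no analytic work beyond the appendix is involved.
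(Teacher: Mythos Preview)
Your proposal is correct and follows essentially the same route as the paper's proof, which simply states that the result follows directly from Propositions \ref{Strong Morrey implies seq weak* lsc} and \ref{seq weak* lsc implies strong Morrey}. One small inaccuracy in your commentary: the Besicovitch derivation argument actually underlies Proposition \ref{Strong Morrey implies seq weak* lsc} (strong Morrey quasiconvexity $\Rightarrow$ lower semicontinuity), not Proposition \ref{seq weak* lsc implies strong Morrey}; the latter only needs Lipschitz boundary (which convexity of $\mathcal{O}_1$ provides), while the former is where convexity of the set on which $f$ is strong Morrey quasiconvex genuinely enters. This does not affect the validity of your chain of implications.
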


\begin{proof}
	The result follows directly from the Propositions \ref{Strong Morrey implies seq weak* lsc} and \ref{seq weak* lsc implies strong Morrey}.
\end{proof}

%
%
%


\medskip

\subsection{Hierarchy of convexity notions.}\label{subsection Hierarchies}

The convexity notions related to the lower semicontinuity of the supremal functionals under consideration having been introduced, we investigate in the sequel how they relate to each other. We retake the work by \cite{BJW} and we try to make an exhaustive study of the notions of convexity introduced above in terms of necessary and sufficient conditions to each of them. We review the properties stated therein, we establish other relations and we provide counter-examples whenever possible. The section finishes with a list of questions that remain open.


\begin{theorem}\label{thm relations between convexity notions}
Let $N, n \in \mathbb N$ and let $f:\mathbb{R}^{N\times n}\longrightarrow \mathbb{R}$.
\begin{enumerate} 
\item If $f$ is level convex then $f$ is polyquasiconvex and rank one quasiconvex. 

\noindent If $f$ is also Borel measurable then $f$ is weak and periodic-weak Morrey quasiconvex. 

\noindent Moreover, if $f$ is level convex and lower semicontinuous, then $f$ is strong Morrey quasiconvex.
\item Assume that $f$ is polyquasiconvex and satisfies one of the following hypotheses:
\begin{enumerate} 
\item[(i)] $f=g\circ T$, where $T$ is as in the definition of polyquasiconvexity and $g:\mathbb{R}^{\tau(n,N)}\longrightarrow\mathbb{R}$ is level convex and l.s.c.;
\item[(ii)] $f$ is l.s.c. and $\lim_{|\xi|\to+\infty}f(\xi)=+\infty$, 
\end{enumerate}
then $f$ is strong Morrey quasiconvex.

\noindent If $f$ is polyquasiconvex and $f=g\circ T$, with $g$ level convex and Borel measurable, then $f$ is weak and periodic-weak Morrey quasiconvex.

\noindent If $f$ is polyquasiconvex then $f$ is rank-one quasiconvex.
\item  If $f$ is  strong Morrey quasiconvex then $f$ is weak and  periodic-weak Morrey quasiconvex.
\item \label{(4)} If $f$ is periodic-weak Morrey quasiconvex then $f$ is weak Morrey quasiconvex.
\item If $f$ is weak Morrey quasiconvex and upper semi-continuous then $f$ is rank-one quasiconvex.
\end{enumerate}

\begin{enumerate} 
\setcounter{enumi}{5}
\item If $f$ is periodic-weak Morrey quasiconvex in any cube $C\subseteq\mathbb{R}^n$ then $f$ is rank-one quasiconvex. In particular, if $f$ is strong Morrey quasiconvex then $f$ is rank-one quasiconvex.
\item Let $n=1$ or $N=1$. Then $f$ is level convex if and only if it is polyquasiconvex and if and only if it is rank-one quasiconvex. Furthermore $(i)$ if $f$ is lower semicontinuous, then $f$ is level convex if and only if it is strong Morrey quasiconvex, $(ii)$ if $f$ is upper semicontinuous then $f$ is level convex if and only if $f$ is weak Morrey quasiconvex, and if and only if it is periodic-weak Morrey quasiconvex. In particular, if $f$ is continuous, all the notions are equivalent. 

If $n=1$, the upper semicontinuity can be replaced by Borel measurability, to get $f$ level convex if and only if 
$f$ weak Morrey quasiconvex and if and only $f$ periodic-weak Morrey quasiconvex. 

\end{enumerate} 
\end{theorem}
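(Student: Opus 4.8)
The plan is to organise the seven statements by the mechanism of proof rather than to treat them in order.

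\emph{Implications driven by a Jensen-type inequality.} The basic tool is that a level convex $g$ satisfies $g\bigl(\dashint_\Omega v\bigr)\le\operatorname*{ess\,sup}_\Omega g(v)$ for every $v\in L^\infty$, which is immediate since the sublevel sets of $g$ are convex and $\dashint_\Omega v$ lies in the closed convex hull of the essential range of $v$. If $f$ is level convex and Borel, taking $v=\xi+D\varphi$ with $\varphi\in W^{1,\infty}_0(Q;\mathbb R^N)$ (resp. $\varphi\in W^{1,\infty}_{\rm per}(Q;\mathbb R^N)$) gives $\dashint_Q v=\xi$ and hence weak (resp. periodic-weak) Morrey quasiconvexity; level convexity restricted to rank-one segments is rank-one quasiconvexity; and $f=g\circ T$ with $g(X_1,\dots,X_{\min\{n,N\}}):=f(X_1)$ level convex, so $f$ is polyquasiconvex. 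For a polyquasiconvex $f=g\circ T$ with $g$ level convex and Borel, the same computation gives weak and periodic-weak Morrey quasiconvexity once one recalls that the subdeterminants are null Lagrangians, so $\dashint_Q\mathrm{adj}_s(\xi+D\varphi)=\mathrm{adj}_s(\xi)$ for $\varphi\in W^{1,\infty}_0(Q;\mathbb R^N)$ and for $Q$-periodic $D\varphi$ of zero mean; thus $\dashint_Q T(\xi+D\varphi)=T(\xi)$ and $f(\xi)=g(T(\xi))\le\operatorname*{ess\,sup}_Q f(\xi+D\varphi)$. Polyquasiconvexity implies rank-one quasiconvexity because $\lambda\mapsto\mathrm{adj}_s(\lambda\xi+(1-\lambda)\eta)$ is affine when $\mathrm{rank}(\xi-\eta)=1$, so $T$ is affine along rank-one segments and level convexity of $g$ closes the argument. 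This settles item~(1) (apart from its last assertion), item~(2) (apart from (i)--(ii)), and item~(4), the latter because a $\varphi\in W^{1,\infty}_0(Q;\mathbb R^N)$ extends $Q$-periodically to an admissible periodic test function.

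\emph{Strong Morrey downward, and rank-one quasiconvexity via laminates (items (3), (5), (6)).} Strong Morrey quasiconvexity implies weak Morrey quasiconvexity at once, since a $\varphi\in W^{1,\infty}_0(Q;\mathbb R^N)$ has $\max_{\partial Q}|\varphi|=0\le\delta$ for every $\delta$, so the defining inequality holds for all $\varepsilon>0$; for periodic-weak quasiconvexity, rescale $\varphi_m(x):=\tfrac1m\varphi(mx)$ of a periodic $\varphi$, which keeps $\|D\varphi_m\|_{L^\infty(Q)}$ and $\operatorname*{ess\,sup}_Q f(\xi+D\varphi_m)$ unchanged while $\|\varphi_m\|_{L^\infty}\to0$, then apply strong Morrey quasiconvexity for $m$ large and let $\varepsilon\to0$ (and similarly in any cube, using Remark~\ref{StrongMorreytrans} and Proposition~\ref{proposition independence of domain}). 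For item~(6), fix $\xi-\eta=a\otimes b$, $\lambda\in(0,1)$, set $\xi_\lambda:=\lambda\xi+(1-\lambda)\eta$, rotate so that $b$ is a coordinate axis, and let $\phi$ be the one-periodic sawtooth with $\phi'=1-\lambda$ on intervals of total length $\lambda$ and $\phi'=-\lambda$ on intervals of total length $1-\lambda$; then $\varphi(x):=a\phi(x\cdot b)$ is periodic on the (rotated) unit cube, $D\varphi$ takes only the values $(1-\lambda)(\xi-\eta)$ and $-\lambda(\xi-\eta)$, so $\xi_\lambda+D\varphi\in\{\xi,\eta\}$ a.e.\ and $\dashint D\varphi=0$, and periodic-weak Morrey quasiconvexity in that cube gives $f(\xi_\lambda)\le\max\{f(\xi),f(\eta)\}$; together with item~(3) (in an arbitrary cube) this also yields that strong Morrey quasiconvexity implies rank-one quasiconvexity. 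Item~(5) is the subtlest: on a fixed cube the periodic laminate must be truncated to a $W^{1,\infty}_0$ test function and upper semicontinuity is used to control $f$ on the (vanishing) transition layer, following the construction of Barron--Jensen--Wang \cite{BJW} with upper semicontinuity in place of continuity.

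\emph{The scalar case and polyquasiconvexity $\Rightarrow$ strong Morrey quasiconvexity (items (7) and (2)(i)--(ii)).} When $n=1$ or $N=1$ one has $\min\{n,N\}=1$, so $T=\mathrm{id}$ and polyquasiconvexity reduces to level convexity, while every matrix has rank at most one, so rank-one quasiconvexity also reduces to level convexity; the equivalences with the Morrey notions under the stated semicontinuity hypotheses then follow by combining the previous items with Proposition~\ref{fSMqcxOmegaflsc}, and when $n=1$ the truncation of item~(5) is unnecessary because the sawtooth on an interval vanishes at both endpoints, so Borel measurability suffices there. For the implication polyquasiconvexity~$\Rightarrow$~strong Morrey quasiconvexity, the cut-off trick of Proposition~\ref{new characterization of quasiconvexity} does not by itself absorb the boundary error, since essential suprema ignore sets of small measure; I would instead route through the equivalence, established in Appendix~A, between strong Morrey quasiconvexity of $f$ and sequential weak* lower semicontinuity on every bounded open domain of $u\mapsto\operatorname*{ess\,sup}_\Omega f(Du)$ (Propositions~\ref{Strong Morrey implies seq weak* lsc} and \ref{seq weak* lsc implies strong Morrey}). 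In case (i), if $u_k\weaklystar u$ in $W^{1,\infty}$ then $T(Du_k)\weaklystar T(Du)$ by the weak continuity of minors, and since $g$ is level convex and lower semicontinuous the functional $v\mapsto\operatorname*{ess\,sup}_\Omega g(v)$ is sequentially weak* lower semicontinuous on $L^\infty$, so $F(\cdot,\Omega)$ is weak* lower semicontinuous; case (ii) runs along the same lines after using the coercivity $f(\xi)\to+\infty$ to confine, on subsequences of bounded energy, $Du_k$ to a fixed ball and replacing $g$ by its level-convex lower semicontinuous envelope. Finally, the last assertion of item~(1) is the special case of (2)(i) with $g(X_1,\dots):=f(X_1)$, which is level convex and lower semicontinuous together with $f$. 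The two delicate points I anticipate are precisely the transition-layer estimate in item~(5) under upper semicontinuity alone, and, in item~(2)(ii), checking that passing to the level-convex lower semicontinuous envelope $g^{**}$ of $g$ preserves $f=g^{**}\circ T$ on the range of $T$ (this is where lower semicontinuity and coercivity of $f$ are needed); if the latter cannot be arranged, one must instead establish the weak* lower semicontinuity of $u\mapsto\operatorname*{ess\,sup}_\Omega f(Du)$ directly, combining the weak continuity of minors with a Young-measure argument of the type developed in Appendix~B.
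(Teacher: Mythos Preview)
Your outline matches the paper closely for items (1), (3)--(7) and for the second and third assertions of item (2); the mechanisms you identify (supremal Jensen's inequality via Theorem~\ref{Jensensupremalscalar}, the null-Lagrangian identity $\dashint_Q T(\xi+D\varphi)=T(\xi)$, the rescaling $\varphi_m(x)=\tfrac1m\varphi(mx)$, the rotated sawtooth laminate, and the route through sequential weak* lower semicontinuity via Propositions~\ref{Strong Morrey implies seq weak* lsc} and~\ref{seq weak* lsc implies strong Morrey}) are exactly those the paper uses. One small caveat: your one-line justification of the supremal Jensen inequality (``$\dashint v$ lies in the closed convex hull of the essential range'') is too quick when $g$ is not lower semicontinuous, since sublevel sets of a merely level-convex $g$ need not be closed; the paper handles this carefully in Appendix~C by an induction on the dimension of the convex set and a supporting-hyperplane argument, avoiding any closure.

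The genuine gap is in your treatment of (2)(ii). Your proposed route---replace the (unknown) level-convex $g$ in $f=g\circ T$ by its level-convex lower-semicontinuous envelope---does not go through, and you are right to flag it: there is no reason the envelope of $g$ should agree with $g$ on the range $T(\mathbb{R}^{N\times n})$, even under the lower semicontinuity and coercivity of $f$. The paper does \emph{not} try to improve $g$ at all. Instead it works directly with the sublevel sets $E_{r+\varepsilon}:=\{f\le r+\varepsilon\}$: polyquasiconvexity of $f$ forces these to be \emph{polyconvex sets} in the sense of \cite[Definition~7.2(ii)]{Dbook}, which by \cite[Theorem~7.4]{Dbook} means $E_{r+\varepsilon}=\{\xi:\ T(\xi)\in\mathrm{co}(T(E_{r+\varepsilon}))\}$. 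Coercivity and lower semicontinuity make $E_{r+\varepsilon}$ compact, hence $T(E_{r+\varepsilon})$ compact and $\mathrm{co}(T(E_{r+\varepsilon}))$ closed. One then passes to the limit via the sequentially weak* lower semicontinuous integral functional $V\mapsto\int_\Omega\mathrm{d}(V(x),\mathrm{co}(T(E_{r+\varepsilon})))\,dx$ applied to $V=T(Du_k)$, using weak* continuity of minors; this pins $T(Du(x))\in\mathrm{co}(T(E_{r+\varepsilon}))$ and hence $Du(x)\in E_{r+\varepsilon}$ a.e. The same distance-functional idea, with $E_{r+\varepsilon}$ replaced by the convex closed sublevel set of $g$, is also how the paper proves (2)(i); your statement ``$v\mapsto\operatorname*{ess\,sup}_\Omega g(v)$ is sequentially weak* lsc on $L^\infty$'' is the right conclusion but needs this argument to be justified.
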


To ease the reading of the theorem, consider the following figure.
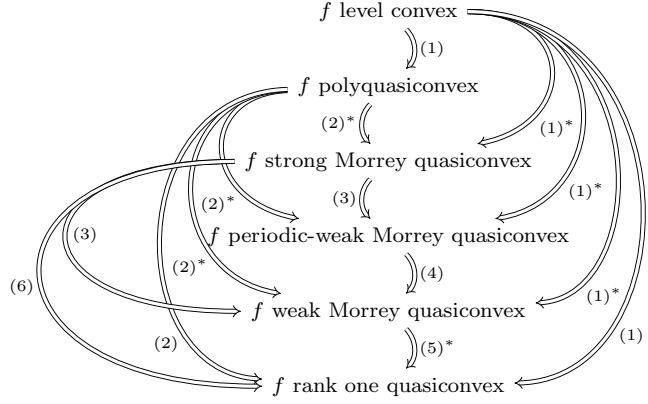
\begin{figure}[H]
\begin{tikzcd}[arrows=Rightarrow]
{f \text{ level convex}} \arrow[out=-45,in=45]{d}{}[right]{(1)} 
\arrow[dd, "(1)^*", to path={[pos=0.75]..controls +(2.5,0) and  +(2.5,0.5).. (\tikztotarget) \tikztonodes}] 
\arrow[ddd, "(1)^*", to path={[pos=0.75]..controls +(3,0) and  +(3,0.5).. (\tikztotarget) \tikztonodes}] 
\arrow[dddd, "(1)^*", to path={[pos=0.85]..controls +(3.5,0) and  +(3.5,0.2).. (\tikztotarget) \tikztonodes}] 
\arrow[ddddd, "(1)", to path={[pos=0.75]..controls +(4,0) and  +(4,0.1).. (\tikztotarget) \tikztonodes}] 
\\
{f \text{ polyquasiconvex}} \arrow[out=-135,in=135]{d}{}[left]{(2)^*} 
\arrow[dd, "(2)^*", to path={[swap, pos=0.75]..controls +(-2.5,-0.1) and  +(-2.5,0.5).. (\tikztotarget) \tikztonodes}] 
\arrow[ddd, "(2)^*", to path={[swap, pos=0.75]..controls +(-3,-0.1) and  +(-3,0.5).. (\tikztotarget) \tikztonodes}] 
\arrow[dddd, "(2)", to path={[swap, pos=0.75]..controls +(-3.5,-0.1) and  +(-3.5,0.2).. (\tikztotarget) \tikztonodes}] 
\\
{f \text{ strong Morrey quasiconvex}} \arrow[out=-135,in=135]{d}{}[left]{(3)} 
\arrow[dd, "(3)", to path={[pos=0.55]..controls +(-5,0) and  +(-5,0).. (\tikztotarget) \tikztonodes}] 
\arrow[ddd, "(6)", to path={[swap, pos=0.5]..controls +(-5.5,0) and  +(-5.5,0).. (\tikztotarget) \tikztonodes}] 
\\
{f \text{ periodic-weak Morrey quasiconvex}} \arrow[out=-45,in=45]{d}{}[right]{(4)}  \\
{f \text{ weak Morrey quasiconvex}} \arrow[out=-45,in=45]{d}{}[right]{(5)^*} \\
{f \text{ rank one quasiconvex}}
\end{tikzcd}
\caption{The figure shall be considered accompanied by Theorem \ref{thm relations between convexity notions}. Namely, the labels to each implications refer to the items in the theorem containing the referred implication and, when additional hypotheses are required, the label is signed with $^*$.}
\end{figure}

\begin{remark}
 \begin{enumerate}
\item As observed earlier, strong Morrey quasiconvex functions are lower semicontinuous while polyquasiconvex functions may fail to enjoy this property. For that reason we considered in (2) of the previous proposition lower semicontinuity assumptions.
As we will see in Proposition \ref{proposition-counter-implications} the upper semicontinuity hypothesis cannot be removed in (5).
\item With respect to (6), observe that if we only assume $f$ is periodic-weak Morrey quasiconvex (in the cube $Q$) then we can only get the rank-one quasiconvexity of $f$ in some rank one directions, namely those that are given by matrices with only one non-null column. Note that this is what is established by Ansini and Prinari in \cite[Proposition 5.1 (i)]{APSIMA} in the case of the ${\rm curl}$ operator. Actually, as already mentioned, $\rm curl$-weak quasiconvexity in \cite{APSIMA} is what we called periodic-weak Morrey quasiconvexity. According to \cite[Proposition 5.1 (i)]{APSIMA}, this ensures the level convexity inequality in the directions of the kernel of the  ${\rm curl}$ operator that are the directions that we find in our argument ({\sl cf.} Remark \ref{remark periodic-weak} (iii)).
\item[(3)] Contrary to the case $n=1$, where weak Morrey quasiconvexity and periodic-weak Morrey quasiconvexity imply level convexity, we shall see in Proposition \ref{proposition-counter-implications}, that in the case $N=1$, this is not true for weak Morrey quasiconvexity,  even if the function $f$ is lower semicontinuous, while it is currently open the periodic-weak Morrey setting.
\end{enumerate}
\end{remark}

\begin{proof}
Conditions (1), (3), (4) and (6) follow from standard arguments, as well as the last two assertions of (2). For the first statement of (1), it is enough to make use of the first component of the vector function $T$. The second assertion of (1) follows by restricting to rank-one connected matrices. The next two assertions of (1) follow from Jensen's inequality (cf. Theorem \ref{Jensensupremalscalar}) 
applied with $\varphi= \xi+ D\psi$, for $\psi\in W_0^{1,\infty}(Q;\mathbb{R}^N)$ or $\psi\in W_{\rm per}^{1,\infty}(Q;\mathbb{R}^N)$,  $\Omega= Q$, and $\mu$ the Lebesgue measure restricted to the cube $Q$ if we observe that, either $W_0^{1,\infty}(Q;\mathbb{R}^N)$ and $W_{\rm per}^{1,\infty}(Q;\mathbb{R}^N)$ have zero integral average. 
 The last assertion of (1) follows from the previous ones and (2) (i), once this is proved.

With respect to (3), given $\varphi\in W_{\rm{per}}^{1,\infty}(Q;\mathbb{R}^N)$, 
define $\varphi_n(x):=\frac{1}{n}\varphi(nx)$. Then let $\varepsilon=\frac{1}{n}$, $K=||D\varphi||_{L^{\infty}(Q;\mathbb{R}^{N\times n})}$, and $\xi\in\mathbb{R}^{N\times n}$. Consider the constant $\delta>0$ provided by the assumption of strong Morrey quasiconvexity. Note that, for sufficiently large $n$, $\max_{x\in\partial Q}|\varphi_n(x)|\le \delta$. Therefore, applying the assumption, one gets $$f(\xi)\le \operatorname*{ess\,sup}_{x\in Q} f(\xi+D\varphi_n(x))+\frac{1}{n}=\operatorname*{ess\,sup}_{x\in Q} f(\xi+D\varphi(x))+\frac{1}{n},$$ and the desired inequality is achieved by letting $n\to\infty$. Regarding (4), it is enough to observe that $W_{0}^{1,\infty}(Q;\mathbb{R}^N)\subseteq W_{\rm{per}}^{1,\infty}(Q;\mathbb{R}^N)$. With respect to second assertion in (2), let $\xi\in\mathbb{R}^{N\times n}$ and $\varphi\in W_{\rm per}^{1,\infty}(Q;\mathbb{R}^N)$. Since $\mathrm{adj}_s$, $2\le s\le \min\{n,N\}$, are quasiaffine functions (in the sense of \cite[Definition 1.5]{Dbook}), $$T(\xi)=\int_QT(\xi+D\varphi(x))\,dx$$ and thus, by Theorem \ref{Jensensupremalscalar},
$$\displaystyle f(\xi)=g(T(\xi))=g\left(\int_QT(\xi+D\varphi(x))\,dx\right)\le \operatorname*{ess\,sup}_{x\in Q}g\left( T(\xi+ D\varphi\left( x\right) ) \right)= \operatorname*{ess\,sup}_{x\in Q}f\left( \xi+ D\varphi\left( x\right)  \right).$$ In this way we proved that $f$ is periodic-weak Morrey quasiconvex. Next, we prove the last assertion of (2). Let $\xi,\eta\in\mathbb{R}^{N\times n}$ such that ${\rm rank}(\xi-\eta)=1$. Then, for some level convex function $g:\mathbb{R}^{\tau(n,N)}\longrightarrow\mathbb{R}$, $f=g\circ T$ and 
\begin{align*}
f(\lambda\xi+(1-\lambda)\eta)= & 
g(T(\lambda\xi+(1-\lambda)\eta))
=g(\lambda T(\xi)+(1-\lambda)T(\eta))\\
\leq & \max\{g(T(\xi)),g(T(\eta))\}
=\max\{f(\xi),f(\eta)\}
\end{align*} 
where we have used \cite[Lemma 5.5]{Dbook} and the level convexity of $g$, achieving the rank-one quasiconvexity of $f$. Finally, we prove (6). It follows as in \cite[proof of Theorem 7.7 (ii)]{Dbook}. Let $\xi,\eta\in\mathbb{R}^{N\times n}$ be such that ${\rm rank}(\xi-\eta)=1$. Then $\xi-\eta=a\otimes\nu$ for some $a\in\mathbb{R}^N$ and $\nu\in\mathbb{R}^n$ is a unit vector. Let $R\in \mathcal{SO}(n)$ be a special orthogonal matrix such that $R e_1=\nu$, where $e_1$ is the first vector of the canonical basis of $\mathbb{R}^n$, and denote by $C$ the cube $RQ$. Then, we can construct a function $\varphi\in W^{1,\infty}_{\rm per}(C;\mathbb{R}^N)$ such that $D\varphi\in\{(1-\lambda)(\xi-\eta),-\lambda(\xi-\eta)\}$ {\sl a.e.} in $C$. Therefore, applying the periodic-weak assumption on $f$ in every cube, one gets
$$f(\lambda\xi+(1-\lambda)\eta)\le \operatorname*{ess\,sup}_{x\in C}f(\lambda\xi+(1-\lambda)\eta+D\varphi)=\max\{f(\xi),f(\eta)\},$$ proving the rank-one quasiconvexity. The statement regarding strong Morrey quasiconvexity, follows from Proposition \ref{proposition independence of domain} combined with the previous.

Condition (5) was proved in \cite[Theorem A.5]{RZ}. Regarding (7), it suffices to observe that level convexity is equivalent to rank-one quasiconvexity and polyquasiconvexity. All the previous points guarantee the remaining equivalences, up to the last assertion, which follows by standard arguments.

It remains to prove the first part of (2). 

This relies on results regarding lower semicontinuity of functionals presented in the Appendix.
By Proposition \ref{seq weak* lsc implies strong Morrey} combined with Proposition \ref{proposition independence of domain}, it is enough to show that, under each of the two set of hypotheses, the functional $F(u,O):=\operatorname*{ess\,sup}_{x\in \Omega}f\left(Du\left( x\right)  \right)$ is sequentially weakly* l.s.c. in $W^{1,\infty}(O;\mathbb{R}^N)$. 

First we present the proof of the sequential weak* l.s.c. of the functional $F$ under assumption (i). Let $(u_k)_{k\in\mathbb{N}}\subseteq W^{1,\infty}(O,\mathbb{R}^N)$ be an arbitrary sequence weakly* converging to some function $u$ in $W^{1,\infty}(O,\mathbb{R}^N)$. We want to show that 
$$F(u,O) \le \liminf_{k\to\infty}F(u_k,O).$$
Since, by \cite[Theorem 8.20, Remark 8.21 (iv)]{Dbook}, $T(Du_k)$ weakly* converges to $T(Du)$ in $L^\infty(O;\mathbb{R}^{\tau(n,N)})$, it is enough to show that 
$$G(V,O):=\operatorname*{ess\,sup}_{x\in O}g\left(V\left( x\right)  \right),\qquad V\in L^\infty(O;\mathbb{R}^{\tau(n,N)})$$
is sequential weak* l.s.c. in $L^\infty(O;\mathbb{R}^{\tau(n,N)}).$ Let then $(V_k)_{k\in\mathbb{N}}\subseteq L^\infty(O;\mathbb{R}^{\tau(n,N)})$ be an arbitrary sequence weakly* converging in $L^\infty(O;\mathbb{R}^{\tau(n,N)})$ to some function $V$.

Let $r:=\liminf_{k\to\infty}G(V_k,O)=\lim_{i\to\infty}G(V_{k_i},O)$ for some subsequence $(V_{k_i})_{i\in\mathbb{N}}$ of $(V_k)_{k\in\mathbb{N}}$. Then, by definition of limit, for arbitrary $\varepsilon>0$, there is $i_0\in\mathbb{N}$ such that, for $i\ge i_0$, 
$$g(V_{k_i}(x))\le \operatorname*{ess\,sup}_{x\in O}g\left(V_{k_i}\left(x\right)\right)=G(V_{k_i},O)\le r+\varepsilon\qquad\text{for}\ a.e.\ x\in O.$$
That is, denoting $E_{r+\varepsilon}:=\{S\in \mathbb{R}^{\tau(n,N)}:\ g(S)\le r+\varepsilon \}$
one has, for $i\ge i_0$, 
$V_{k_i}(x)\in E_{r+\varepsilon}\ \text{for}\ a.e.\ x\in O$ and thus $\mathrm{d}(V_{k_i}(x),E_{r+\varepsilon})=0\ \text{for}\ a.e.\ x\in O,$ where $\mathrm{d}(\cdot,E_{r+\varepsilon})$ denotes the distance function to the set $E_{r+\varepsilon}$.
Since $g$ is level convex, the set $E_{r+\varepsilon}$ is convex and by \cite[Theorem 5.14]{Fonseca-Leoni-Lp} the functional 
$$D(V,O):=\int_O\mathrm{d}\left(V(x);E_{r+\varepsilon}\right)\,dx$$ is sequentially weakly* l.s.c. in $L^\infty(O;\mathbb{R}^{\tau(n,N)})$. Therefore, 
$$0\le \int_O\mathrm{d}\left(V(x);E_{r+\varepsilon}\right)\,dx\le \liminf_{i\to\infty}\int_O\mathrm{d}\left(V_{k_i}(x);E_{r+\varepsilon}\right)\,dx=0$$
and $\mathrm{d}\left(V(x);E_{r+\varepsilon}\right)$ for $a.e.\ x\in O$. Using the hypothesis that $g$ is l.s.c. we have that $E_{r+\varepsilon}$ is closed and thus $V(x)\in E_{r+\varepsilon}$ for $a.e.\ x\in O$ that gives 
$$\operatorname*{ess\,sup}_{x\in O}g\left(V\left( x\right)  \right)\le r+\varepsilon$$
ensuring the desired condition by letting $\varepsilon\to 0$ and recalling the definition of $r$.

Finally, we prove the sequential weak* l.s.c. of the functional $F$ under condition (ii). As before, let $(u_k)_{k\in\mathbb{N}}\subseteq W^{1,\infty}(O,\mathbb{R}^N)$ be an arbitrary sequence weakly* converging to some function $u$ in $W^{1,\infty}(O,\mathbb{R}^N)$. Let $(u_{k_i})_{i\in\mathbb{N}}$ be a subsequence of $(u_k)_{k\in\mathbb{N}}$ such that $\liminf_{k\to\infty}F(u_k,O)=\lim_{i\to\infty}F(u_{k_i},O)$ and let $r:=\lim_{i\to\infty}F(u_{k_i},O)$. Defining $E_{r+\varepsilon}:=\{\xi\in \mathbb{R}^{N\times n}:\ f(\xi)\le r+\varepsilon \}$, one has that, given $\varepsilon>0$, there is $i_0\in\mathbb{N}$ such that, for $i\ge i_0$, 
$$T(Du_{k_i}(x))\in T(E_{r+\varepsilon})\ \text{for}\ a.e.\ x\in O.$$ In particular, $\mathrm{d}(T(Du_{k_i}(x)),T(E_{r+\varepsilon}))=0\ \text{for}\ a.e.\ x\in O$ and also
$$\mathrm{d}(T(Du_{k_i}(x)),\mathrm{co}(T(E_{r+\varepsilon})))=0\ \text{for}\ a.e.\ x\in O.$$ Now we invoke, as above, the sequential weak* l.s.c. in $L^\infty(O;\mathbb{R}^{\tau(n,N)})$ of the functional $$B(V,O):=\int_O\mathrm{d}\left(V(x);\mathrm{co}(T(E_{r+\varepsilon}))\right)\,dx.$$
Since, $T(Du_{k_i})$ weakly* converges to $T(Du)$ in $L^\infty(O;\mathbb{R}^{\tau(n,N)})$,
$$0\le \int_O\mathrm{d}\left(T(Du(x));\mathrm{co}(T(E_{r+\varepsilon}))\right)\,dx\le \liminf_{i\to\infty}\int_O\mathrm{d}\left(T(Du_{k_i}(x));\mathrm{co}(T(E_{r+\varepsilon}))\right)\,dx=0$$
giving 
\begin{equation}\label{eq distance to convex}
	\mathrm{d}\left(T(Du(x));\mathrm{co}(T(E_{r+\varepsilon}))\right)=0\ \text{for }a.e.\ x\in O.
\end{equation}

Since $f$ is l.s.c., the set $E_{r+\varepsilon}$ is closed. Moreover, the growth assumption on $f$, ensures that $E_{r+\varepsilon}$ is  bounded and thus compact. Therefore, $T(E_{r+\varepsilon})$ is also compact and we can apply \cite[Theorem 2.14]{Dbook} to ensure that $\mathrm{co}(T(E_{r+\varepsilon}))$ is closed. 
This, together with \eqref{eq distance to convex} entails that 
$$T(Du(x))\in \mathrm{co}(T(E_{r+\varepsilon}))\ \text{for }a.e.\ x\in O.$$

It is now enough to show that 
\begin{equation}\label{eq polyconvex set}
	\{\xi\in\mathbb{R}^{N\times n}:\ T(\xi)\in \mathrm{co}(T(E_{r+\varepsilon}))\}=E_{r+\varepsilon}
\end{equation}
to conclude that $Du(x)\in E_{r+\varepsilon}\ \text{for }a.e.\ x\in O$ that, in turn, ensures $f(Du(x))\le r\ \text{for }a.e.\ x\in O$ as wished.  

Regarding \eqref{eq polyconvex set}, it follows from \cite[Theorem 7.4 (iii)]{Dbook} and the fact that $E_{r+\varepsilon}$ is polyconvex in the sense of \cite[Definition 7.2 (ii)]{Dbook}, by \cite[Theorem 7.4 (ii)]{Dbook} and the polyquasiconvexity of $f$. \hfill 
\end{proof}

We give, next, several examples of functions enjoying or not the convexity notions discussed above. These examples, besides the interest in itself, will be useful to discuss in Proposition \ref{proposition-counter-implications} below the validity of the counter-implications of the previous proposition.

\begin{example}\label{exampleA}
Let $g:\mathbb{R}\longrightarrow\mathbb{R}$ be the characteristic function $g=\chi_{]1,\infty)}$ and, for $n >1$, define $f:\mathbb{R}^{n\times n}\longrightarrow\mathbb{R}$ as $f(\xi)=g(\det(\xi))$. Trivially, since $g$ is level convex, $f$ is polyquasiconvex. However, one can easily see that $f$ is not level convex. Moreover, $f$ is lower semicontinuous because $g$ is lower semicontinuous and the determinant is a continuous function.
\end{example}

\begin{example}\label{exampleC} This example was given in \cite[Example A.3]{RZ}. Exploiting it further, it also serves to discuss periodic-weak Morrey quasiconvexity. Let $N\ge 1$ and $n>1$. Let $S:=\{\xi, \eta\}\subset\mathbb{R}^{N\times n}$ such that $\rm{rank}(\xi-\eta)=1$ and let $f:=1-\chi_S$, where $\chi_S$ is the characteristic function of $S$. As proved in \cite[Example A.3]{RZ}, the function $f$ is not rank one quasiconvex, but it is weak Morrey quasiconvex. In particular, as noticed in \cite[Example 2.7]{P}, $f$ is not strong Morrey quasiconvex. Note also, that $f$ is lower semicontinuous (although it is not continuous).

Moreover, if we choose $\xi$ and $\eta$ such that $\xi-\eta=a\otimes e_1$ for some $a\in \mathbb{R}^N$ ($e_1$ being the first vector of the canonical basis in $\mathbb{R}^n$), arguing as in \cite[proof of Theorem 3.2 (ii)]{DR} (see also \cite[page 319]{Dbook}) we conclude that $f$ is not periodic-weak Morrey quasiconvex. (Note that here we need to construct a function $\varphi\in W_{\rm{per}}^{1,\infty}(Q;\mathbb{R}^N)$ and that is the reason to choose $\xi$ and $\eta$ so that $\xi-\eta$ is compatible with the cube $Q$.) In an analogous way, considering appropriate matrices $\xi$ and $\eta$ we can ensure that $f$ is not periodic-weak Morrey quasiconvex in a given cube $C$.
\end{example}

\begin{example}\label{exampleE} According to the result proved by Kirchheim \cite{Kirchheim} (see also \cite[Theorem 7.12]{Dbook}), if $N\ge 2$ and $n\ge 2$, there is a finite number of $N\times n$ matrices, $\xi_1,...,\xi_m\in\mathbb{R}^{N\times n}$, such that
$\operatorname*{rank}(\xi_i-\xi_j)>1,\ \forall\ i\neq j$ and
there exist $\xi_0\notin\{\xi_1,...,\xi_m\}$ and $u\in u_{\xi_0}+W_0^{1,\infty}(Q;\mathbb{R}^N)$
(where $u_{\xi_0}$ denotes an affine function verifying $Du_{\xi_0}(x)\equiv\xi_0$) with $Du(x)\in \{\xi_1,...,\xi_m\},\
a.e.$ in $Q$. Consider then the function $f=1-\chi_S$ where $S=\{\xi_1-\xi_0,...,\xi_m-\xi_0\}$ and $\chi_S$ is the characteristic function of $S$. Of course $f$ is lower semi-continuous and, by the properties stated above it is rank-one quasiconvex, but not strong Morrey quasiconvex. To show this last statement, it's enough to consider $\varphi:=u-u_{\xi_0}\in W_0^{1,\infty}(Q;\mathbb{R}^N)$ to get a contradiction to strong Morrey quasiconvexity. Indeed, take $\varepsilon\in(0,1)$, $\xi=0$, and $K=\max\{|\xi_0-\xi_1|,...,|\xi_0-\xi_m|\}$ and observe that $f(0)=1>\varepsilon= \operatorname*{ess\,sup}_{x\in Q}f(0+D\varphi(x))+\varepsilon$.
\end{example}

\begin{proposition}\label{proposition-counter-implications}
Let $N,n\in\mathbb{N}$ and denote by $f$ a real valued function defined in $\mathbb{R}^{N\times n}$. 
\begin{itemize}
\item[(i)] If $N, n>1$, there are (lower semicontinuous) non-level convex functions $f$ that are polyquasiconvex, strong Morrey quasiconvex, periodic-weak Morrey quasiconvex, weak Morrey quasiconvex, and rank one quasiconvex.
\item[(ii)] For $N\ge 1$ and $n>1$, there are (lower semicontinuous) functions $f$ that are weak Morrey quasiconvex, but neither polyquasiconvex, nor strong Morrey quasiconvex, nor rank one quasiconvex, nor periodic-weak Morrey quasiconvex in a fixed cube $C$. In particular, taking $C=Q$, there are non periodic-weak Morrey quasiconvex functions satisfying all the previous properties.
\item [(iii)] If $N, n >1$, there are (lower semicontinuous) rank one quasiconvex functions $f$ that are not strong Morrey quasiconvex.
\end{itemize}
\end{proposition}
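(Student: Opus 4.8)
The plan is to prove each of the three items by exhibiting an explicit counterexample, relying entirely on the examples already constructed above together with the hierarchy established in Theorem \ref{thm relations between convexity notions}. For item (i), I would take the function $f$ of Example \ref{exampleA}, namely $f(\xi)=g(\det\xi)$ with $g=\chi_{]1,\infty)}$ on $\mathbb{R}^{n\times n}$ (so here $N=n$; for the general rectangular case $N,n>1$ one replaces $\det$ by a suitable $s\times s$ minor, $s=\min\{n,N\}$, or composes with a projection). It was already observed there that $f$ is polyquasiconvex, lower semicontinuous, and not level convex. Then strong Morrey quasiconvexity follows from Theorem \ref{thm relations between convexity notions}(2)(i), since $f=g\circ T$ with $g$ level convex and l.s.c.; and weak, periodic-weak Morrey quasiconvexity and rank-one quasiconvexity then follow from the implications (3), (6) in the same theorem (or directly from (2)). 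So item (i) reduces to quoting Example \ref{exampleA} and the relevant arrows of the diagram.

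For item (ii), I would use the function $f=1-\chi_S$ of Example \ref{exampleC}, with $S=\{\xi,\eta\}$, $\mathrm{rank}(\xi-\eta)=1$, and $\xi-\eta=a\otimes e_1$ chosen compatible with the cube $C$ in question. Example \ref{exampleC} already records that this $f$ is lower semicontinuous, weak Morrey quasiconvex, not rank-one quasiconvex, not strong Morrey quasiconvex, and — with the compatibility choice — not periodic-weak Morrey quasiconvex in $C$. It remains only to note it is not polyquasiconvex: this is immediate because polyquasiconvexity implies rank-one quasiconvexity by Theorem \ref{thm relations between convexity notions}(2), which $f$ fails. Taking $C=Q$ gives the final sentence of (ii).

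For item (iii), I would invoke Example \ref{exampleE}: using Kirchheim's rigidity example (for $N,n\ge 2$) one gets finitely many matrices $\xi_1,\dots,\xi_m$ with $\mathrm{rank}(\xi_i-\xi_j)>1$ for $i\neq j$ admitting a nontrivial gradient Young-measure-type construction, and the function $f=1-\chi_S$, $S=\{\xi_1-\xi_0,\dots,\xi_m-\xi_0\}$, is lower semicontinuous and rank-one quasiconvex (there are no rank-one connections within $S$, nor between $S$ and its complement that would be violated) but not strong Morrey quasiconvex, the failure being witnessed by the test function $\varphi=u-u_{\xi_0}\in W^{1,\infty}_0(Q;\mathbb{R}^N)$ exactly as spelled out in Example \ref{exampleE}.

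The main obstacle here is not in the logic of the proof — which is essentially bookkeeping over the previously established examples and implications — but in making sure each cited example genuinely has \emph{all} the listed properties and \emph{fails} all the listed ones, with the rank-one-connection directions chosen compatibly with the relevant cube; the one point deserving a careful line is verifying in (ii) that the weak-but-not-rank-one example is also not polyquasiconvex, which I would dispatch by the contrapositive of the polyquasiconvex $\Rightarrow$ rank-one implication rather than by any direct argument. I would therefore keep the written proof short, essentially a paragraph per item, each consisting of "take the function of Example \dots; it has properties X, Y by construction; the remaining claim Z follows from Theorem \ref{thm relations between convexity notions}(\dots)."
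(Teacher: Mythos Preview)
Your proposal is correct and follows essentially the same approach as the paper: items (i), (ii), (iii) are handled by Examples \ref{exampleA}, \ref{exampleC}, \ref{exampleE} respectively, with the missing properties filled in via the implications of Theorem \ref{thm relations between convexity notions}. Your explicit observation in (ii) that failure of polyquasiconvexity follows from the contrapositive of ``polyquasiconvex $\Rightarrow$ rank-one quasiconvex'' is exactly the right way to close that case, and your remark about adapting Example \ref{exampleA} to the rectangular case $N\neq n$ matches the paper's own parenthetical comment.
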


\begin{proof}
Statement (i) is proved by Example \ref{exampleA} having in mind the implications (2) (i), (7), (4), and (8), stated in Theorem \ref{thm relations between convexity notions}. Observe that this example can be easily adapted to the case $N \not = n$. Statement (ii) is proved by Example \ref{exampleC} having in mind Theorem \ref{thm relations between convexity notions} (8) and (9).
Finally, statement (iii) is proved by Example \ref{exampleE}.
\end{proof}



\color{black}
\begin{example}
	\label{arctanex}
	If $f:\mathbb R^{N\times n}\to \mathbb R$ is a lower semicontinuous polyquasiconvex function, satisfying $\lim_{|\xi|\to +\infty} $ $f(\xi)=+\infty$, then the bounded function $\arctan(f):\mathbb R^{N\times n}\to (-\pi/2,\pi/2)$ is polyquasiconvex and strong Morrey quasiconvex. 
\end{example}

The previous analysis leaves open several questions that we list below.
\begin{enumerate}
\item Example \ref{arctanex} shows that the assumptions of Theorem \ref{thm relations between convexity notions} (2)(ii) are not sharp. We can wonder if the coercivity condition $\lim_{|\xi|\to +\infty} f(\xi)=+\infty$ can be removed in general. 
Remind also that if we assume $f$ is level convex and lower semicontinuous (that are in particular polyquasiconvex) then $f$ is strong Morrey quasiconvex with no need of any growth assumption.

\item Can we get an example showing that strong Morrey quasiconvexity does not imply polyquasiconvexity? Recall that in the integral setting there are examples of quasiconvex functions which are not polyconvex (\cite[Theorem 5.51]{Dbook}.

\item Does periodic-weak Morrey quasiconvexity imply polyquasiconvexity?
\item Does periodic-weak Morrey quasiconvexity in any cube $C$ together with lower semicontinuity imply strong Morrey quasiconvexity? (This being the case, then the two conditions are equivalent.)
\item Does weak Morrey quasiconvexity together with the continuity of the function imply strong Morrey quasiconvexity?
\item The results of next section suggest that necessary and sufficient conditions may be obtained under a coercivity assumption. In particular does weak Morrey quasiconvexity imply rank-one or strong Morrey quasiconvexity in the class of coercive functions?
\end{enumerate}

Questions $(3), (4)$ are open even in the scalar case $N=1$.


\section{Convexity notions arising in connection with power-law approximation}
 
In this section, we address the comparison between definitions of the previous sections and those arising in the context of so-called power-law approximation, in particular the notions of ${\rm curl}-\infty$ quasiconvexity and ${\rm curl}$-Young quasiconvexity (see Definition \ref{def more convex notions}). 
The importance of these notions goes beyond the lower semicontinuity of supremal functionals and we review, in the next introductory paragraphs, the context of their introduction in the literature as well as the motivation to our analysis. Having in mind the scope of power-law approximation in the applications (see the list of references in Section 1), we start our discussion focusing on its interplay with the broader notion of $\mathcal A-\infty$ quasiconvexity, $\mathcal A$ denoting a generic differential constraint (e.g. $\mathcal A= \rm div$, in the case of plasticity, or $\mathcal A=(\rm curl,\rm div)$ in the case of micromagnetics, or $\mathcal A= \rm curl$, as in our subsequent analysis).

 At this point it is worth to recall that the theory of $\mathcal A$-quasiconvexity has been introduced by Dacorogna, (see e.g. \cite[pages 100-102]{D1}), 
the theory was later formalized in \cite{FMAq}, in the case of constant rank operators, (to which we refer for a detailed treatment of the subject).
It has been then extended to the context of $L^\infty$ problems, first in the case when $\mathcal A={\rm div}$ treated by Bocea and Nesi \cite{BN} and later, with much wider generality, by Ansini and Prinari in \cite{APSIMA, APESAIM}, giving particular emphasis to power-law approximation.

Indeed, departing from the material science's results already mentioned in the introduction, where it was satisfactory to provide sufficient conditions on a supremand $f:\Omega \times \mathbb R^{m}\to [0,+\infty)$, in order to guarantee the variational convergence, as $p\to +\infty$, of functionals of the type
\begin{equation}\label{powerlawfunct}\left(\int_\Omega f^p(x, v(x))\,dx\right)^{\frac{1}{p}}  
\end{equation}
towards
\begin{equation}\label{suppl}
\operatorname*{ess\,sup}_{x \in \Omega} f(x, v(x)),
\end{equation}
with $v(x)$ possibly satisfying $\mathcal A v=0$ (cf. \cite{GNP, BN, BM, CDP, EP} among a wider literature), the asymptotic behaviour of functionals of the type \eqref{powerlawfunct} has been object of investigation, leading to limiting $L^\infty$ energies different from \eqref{suppl}, see for instance \cite{APESAIM, APSIMA, BGP, BPZ, PZ}.


In particular, in \cite[Theorem 4.2]{APSIMA}, it has been computed the $\Gamma$-limit with respect to the $L^\infty$-weak* convergence of \eqref{powerlawfunct}, for Carath\'eodory integrands, under a generic differential constraint $\mathcal A$ on the fields $v$ and
a linear coercivity condition on $f$ on the second variable, i.e.  there exists $\alpha >0$ such that
\begin{equation}\label{coerci}
	f(x,\xi) \geq \alpha |\xi|,
\end{equation}
 for every $\xi \in \mathbb R^{m}$ and a.e. $x \in \Omega$.
Having in mind the case of $\mathcal A= \rm curl$ and $m=N\times n$,
the obtained limit energy has the form \begin{equation}\label{supplimit}
\operatorname*{ess\,sup}_{x \in \Omega} Q_\infty f(x, v(x)),
\end{equation}
where the density $Q_\infty f$ is the so-called $\rm curl-\infty$ quasiconvex envelope of $f(x,\cdot)$, namely the greatest $\rm curl-\infty$ quasiconvex minorant of $f(x,\cdot)$ (see \cite[Section 3.2]{APSIMA} and \cite{APESAIM} for definitions and proofs of this result in a more general framework and \eqref{Qinfty} below for an equivalent definition). 
More precisely, in \cite[Theorem 4.4]{APSIMA} it has been proven that the ${\rm curl}-\infty$ quasiconvexity of $f$ is necessary and sufficient for the $L^p$ approximation of \eqref{suppl} in terms of \eqref{powerlawfunct} in the continuous, homogeneous (and $\rm curl$-free, among more general operators $\mathcal A$) setting, assuming  \eqref{coerci}. 

With ${\rm curl}-\infty$ quasiconvexity playing a crucial role for the attainment of a variational limit with supremal form, it arises naturally the question of comparing this notion with the other (necessary and) sufficient conditions for this variational convergence and consequently with the necessary and sufficient conditions for lower semicontinuity of supremal functionals expressed in Section 3.
It is worth, indeed, to recall that, from the theoretical stand-point, the variational power-law approximation (for instance obtained via $\Gamma$-convergence), guarantees that the limit functional is weakly* lower semicontinuous (see \cite{DM}).  This entails that the  `convexity' hypotheses which provide power-law approximation are sufficient conditions for the lower semicontinuity of the limiting supremal functional, anyway leaving open the necessity condition. With the aim of adopting power-law approximation to get lower semicontinuity of their limiting supremal functionals, the definition of {\it generalized Jensen's inequality} has been introduced in \cite{CDP},  later revisited by \cite{APESAIM} (for general operators $\mathcal A$), leading to the notion of $\rm curl$-Young quasiconvexity, and $\rm curl_{(p>1)}$-Young quasiconvexity (see Definition \ref{def more convex notions}). 
Hence it emerges also the question of comparing and establishing a hierarchy among the notions of ${\rm curl}_{(p>1)}$-Young, ${\rm curl}-\infty$ and ${\rm curl}$-Young quasiconvexity and those introduced in Section 3. At this point it should be emphasized that the question was already completely solved in the scalar case $n=1$ in \cite{BL}, using three different approaches. Performing an $L^p$ approximation, by means of the duality theory in Convex Analysis, and making use of Young measures, level convex envelopes appear as densities of relaxed functionals. 
It is not yet known how to obtain the relaxation in the vectorial framework, and, at the same time, how to deal with the three approaches  mentioned above. 

In order to introduce some of the mentioned properties we will need the concept of (gradient) Young measures. We start recalling the fundamental theorem of Young measure theory, which we present as in \cite[Theorem 4.1]{Rindler}.

Denote by $\mathcal Pr(\mathbb R^{m})$ the set of Borel probability measures defined in $\mathbb R^{m}$.

\begin{theorem}\label{fundamental-theorem} Let $\Omega\subseteq\mathbb{R}^n$ be an open, 
	bounded, connected set with Lipschitz boundary.
	Let $(V_j)_{j\in\mathbb{N}}$ be a sequence bounded in $ L^p(\Omega;\mathbb R^m)$, 
	where $p \in [1,+\infty]$. Then, there exists a subsequence (not explicitly
	labeled) and a family of probability measures,
	$\{\nu_x\}_{x \in \Omega}\subset {\mathcal Pr}(\mathbb R^m)$,
	called the $(L^p)$-Young measure 
generated by the (sub)sequence $(V_j)_{j\in\mathbb{N}}$, 
such that the
	following assertions are true:
	\begin{itemize}
		\item[(i)] The family $\{\nu_x\}_{x \in \Omega}$ is weakly* measurable, that is, for all Carath\'eodory integrands
		$f : \Omega \times \mathbb R^m \to \mathbb R$, the compound function
		$x\mapsto  <f (x, \cdot), \nu_x>=\int_{\mathbb R^m} f(x, \xi)\,d \nu_x(\xi), \; x \in \Omega$ is Lebesgue measurable,
		\item[(ii)]
		If $p \in [1,+\infty)$, it holds that
		$\int_\Omega \int_{\mathbb R^{m}}|\xi|^p\,d\nu_x(\xi)\,dx <+\infty,$
		or, if $p =\infty$, there exists a compact set $K \subset \mathbb R^m$ such that
		${\rm supp}\,\nu_x \subset K$ for a.e. $x \in \Omega$.
		\item[(iii)] For all Carath\'eodory integrands $f : \Omega \times \mathbb R^m \to \mathbb R$ with the property that the
		family $(f (x, V_j ))_{j\in\mathbb{N}}$ is uniformly bounded in $L^1$ and equiintegrable, it holds that
		$f(x,V_j)\rightharpoonup \left( x \mapsto \int_{\mathbb R^m}f(x,\xi)\,d\nu_x(\xi)\right)$ in $L^1$.
	\end{itemize}
\end{theorem}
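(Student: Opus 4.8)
The plan is to realise the family $\{\nu_x\}$ as an element of a dual Banach space, extract it by weak-$*$ compactness, and then verify (i)--(iii) by approximation. Since $\Omega$ has finite measure and $C_0(\mathbb{R}^m)$ is separable, the Bochner space $\mathbf{E}:=L^1(\Omega;C_0(\mathbb{R}^m))$ is separable, and a disintegration argument identifies its dual with $L^\infty_{\mathrm w}(\Omega;\mathcal M(\mathbb{R}^m))$, the space of (classes of) weakly-$*$ measurable, essentially bounded maps $x\mapsto\mu_x$ into finite Radon measures, under the pairing $\langle\Phi,\mu\rangle=\int_\Omega\int_{\mathbb{R}^m}\Phi(x)(\xi)\,d\mu_x(\xi)\,dx$. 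To each $V_j$ I would associate its elementary Young measure $\delta[V_j]:=(x\mapsto\delta_{V_j(x)})$, an element of the unit ball of this dual. By Banach--Alaoglu together with separability of $\mathbf{E}$, a subsequence satisfies $\delta[V_{j_k}]\overset{*}{\rightharpoonup}\nu=(x\mapsto\nu_x)$; testing against $\phi(x)g(\xi)$ with $\phi,g\geq0$ shows $\nu_x\geq0$ with $\|\nu_x\|_{\mathcal M}\leq1$ for a.e.\ $x$, and weak-$*$ measurability (assertion (i) for $g\in C_0(\mathbb{R}^m)$) is built into the identification of the dual. Assertion (i) for a general Carath\'eodory $f$ then follows by the usual reduction of Carath\'eodory integrands to countable sums $\sum_i\chi_{A_i}(x)g_i(\xi)$ with $g_i$ continuous, for which $x\mapsto\langle f(x,\cdot),\nu_x\rangle$ is patently measurable, followed by a monotone/dominated passage to the limit.

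The next step, where the $L^p$-bound is used, is to rule out loss of mass at infinity. For $p<\infty$, put $M:=\sup_j\|V_j\|_{L^p}$ and, for $R>0$, choose $\varphi_R\in C_c(\mathbb{R}^m)$ with $0\le\varphi_R\le1$ and $\varphi_R\equiv1$ on the ball $B_R$. Then, by Chebyshev's inequality,
$$\int_\Omega\langle\varphi_R,\nu_x\rangle\,dx=\lim_k\int_\Omega\varphi_R(V_{j_k}(x))\,dx\ \ge\ \mathcal L^n(\Omega)-M^p R^{-p};$$
letting $R\to\infty$ gives $\int_\Omega\|\nu_x\|_{\mathcal M}\,dx\ge\mathcal L^n(\Omega)$, hence $\|\nu_x\|_{\mathcal M}=1$ for a.e.\ $x$, so $\nu_x\in\mathcal Pr(\mathbb{R}^m)$. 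Tightness of $(|V_{j_k}|^p)_k$, which follows from the uniform $L^p$-bound, lets one upgrade the test class from $C_0$ to bounded continuous functions; applying this to $\xi\mapsto\min(|\xi|^p,R)$ and letting $R\to\infty$ with lower semicontinuity yields $\int_\Omega\int_{\mathbb{R}^m}|\xi|^p\,d\nu_x(\xi)\,dx\le\liminf_k\|V_{j_k}\|_{L^p}^p<\infty$, which is (ii). For $p=\infty$ all the $V_j$ take values in a fixed closed ball $\overline{B_M}$, so $\operatorname{supp}\nu_x\subset\overline{B_M}=:K$, giving both (ii) and the probability property at once.

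For assertion (iii), take $f$ Carath\'eodory with $(f(\cdot,V_j))_j$ bounded and equi-integrable in $L^1(\Omega)$. I would first establish the convergence for integrands $\phi(x)g(\xi)$ with $\phi\in L^\infty(\Omega)$ and $g\in C_0(\mathbb{R}^m)$ --- this is precisely $\delta[V_{j_k}]\overset{*}{\rightharpoonup}\nu$ tested against $\psi(x)\phi(x)g(\xi)\in\mathbf{E}$ for $\psi\in L^\infty(\Omega)$ --- then extend it to $g\in C_b(\mathbb{R}^m)$ using the tightness above, hence to finite sums $\sum_i\chi_{A_i}(x)g_i(\xi)$ and, via the Carath\'eodory reduction and dominated convergence, to every bounded Carath\'eodory integrand. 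To drop boundedness, truncate: with $\tau_R(t):=\max(-R,\min(R,t))$, the function $f_R:=\tau_R\circ f$ is a bounded Carath\'eodory integrand, so $f_R(\cdot,V_{j_k})\rightharpoonup\big(x\mapsto\langle f_R(x,\cdot),\nu_x\rangle\big)$ in $L^1(\Omega)$; equi-integrability forces $\sup_j\|f(\cdot,V_j)-f_R(\cdot,V_j)\|_{L^1(\Omega)}\to0$ as $R\to\infty$, Fatou's lemma controls $\|\langle f(x,\cdot),\nu_x\rangle-\langle f_R(x,\cdot),\nu_x\rangle\|_{L^1(\Omega)}$, and passing $R\to\infty$ gives (iii).

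The main obstacle I expect is the careful proof of the duality $\big(L^1(\Omega;C_0(\mathbb{R}^m))\big)^*=L^\infty_{\mathrm w}(\Omega;\mathcal M(\mathbb{R}^m))$ --- with its measurable-selection content --- together with the repeated need to move from $C_0$-integrands to merely bounded continuous, and then to unbounded equi-integrable, Carath\'eodory integrands. The tightness estimate of the second step is exactly what enables these upgrades, so obtaining it in clean, reusable form is the crux; everything else is bookkeeping. (Note that connectedness and the Lipschitz regularity of $\partial\Omega$ play no role here, only $\mathcal L^n(\Omega)<\infty$.)
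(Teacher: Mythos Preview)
The paper does not prove this statement: it is explicitly recalled from \cite[Theorem 4.1]{Rindler} as background (``We start recalling the fundamental theorem of Young measure theory, which we present as in \cite[Theorem 4.1]{Rindler}''), so there is no in-paper proof to compare against. Your sketch is the standard textbook argument --- embed elementary Young measures in the dual pair $L^1(\Omega;C_0(\mathbb{R}^m))^*\cong L^\infty_{\mathrm w}(\Omega;\mathcal M(\mathbb{R}^m))$, extract a weak-$*$ limit via Banach--Alaoglu, use the $L^p$-bound for tightness and no mass escape, then extend the test class by truncation and the Scorza--Dragoni/Carath\'eodory reduction --- and this is precisely the route taken in Rindler's book (and in the earlier treatments of Ball, M\"uller, and Fonseca--Leoni that it synthesises). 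Your remark that only $\mathcal L^n(\Omega)<\infty$ is needed, not connectedness or Lipschitz regularity of $\partial\Omega$, is correct; those hypotheses in the paper's statement are there because the theorem is invoked later for gradient Young measures on Sobolev-regular domains, not because the Young-measure existence itself requires them.
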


In the case $p<\infty$, $(iii)$ follows by $(i)$ and $(ii)$, (cf. \cite[Problem 4.3]{Rindler}) and thus we will refer to
($L^p$)-Young measure to any family of parametrized measures $\nu=\{\nu_x\}_{x\in \Omega}$ satisfying $(i)$ and $(ii)$, either if $p \in [1,+\infty)$ or $p=+\infty$.

We will write $V_j\overset{Y}{\to}\nu$ to refer to the sequence $(V_j)_{j\in\mathbb{N}}$ which generates the Young measure $\nu$. 
The Young measure $\nu$ is said to
be homogeneous if there is a measure $\nu_0 \in \mathcal Pr(\mathbb R^m)$  such that $\nu_x = \nu_0$ for $\mathcal L^n$- a.e. $x \in  \Omega.$

In the sequel, we will be interested in Young measures that are generated by sequences of gradients.  Recall that, given an $(L^p)$-Young measure $\nu \equiv\{\nu_x\}_{x\in \Omega}$ 
we say that $\nu$ is a $W^{1,p}$-{\it gradient Young measure}, $p \in [1,+\infty]$, 
if there exists $u_j \in W^{1,p}(\Omega)$ such that $\nabla u_j$ generates $\nu\equiv\{\nu_x\}_{x \in \Omega}$. If $p=+\infty$ we can simply say that $\{\nu_x\}_{x\in \Omega}$ is a  gradient Young measure (we refer to  Kinderlehrer and Pedregal \cite{KP1991, KP1994}, and to Rindler \cite{Rindler}, where these measures are called $W^{1,\infty}$-gradient Young measures).
A {\it homogeneous} $W^{1,p}$-{\it gradient Young measure} ({\it homogeneous gradient Young measure} respectively) is a $W^{1,p}$-{\it gradient Young measure} (a {\it gradient Young measure} respectively) which is homogeneous in the above mentioned sense. 


Having in mind the more general setting of $\mathcal A$- free fields in $L^p$, and in order to understand the results available in literature dealing with $L^p$-approximation, the Fundamental Theorem can be considered also to justify the introduction of $\mathcal A-\infty$ Young measures,  as in \cite[Section 2]{FMAq} and \cite{APESAIM}. Indeed, without loss of generality, these are measures generated by sequences in $L^\infty(\Omega; \mathbb R^{d\times N}) \cap {\rm Ker} \mathcal A$ (where ${\rm Ker}\mathcal A$ denotes the kernel of the operator $\mathcal A$) uniformly bounded in the $L^\infty$ norm (or  equivalently, possibly passing to a subsequence, weakly*
converging in $L^\infty$ (see \cite[Section 2]{FMAq})). 
In the case $\mathcal A= {\rm curl}$, it results that
${\rm curl} -\infty$ Young measures are gradient Young measures. In the following, we will adopt the latter terminology.


\begin{definition}\label{def more convex notions}
Let $f:\mathbb{R}^{N\times n}\longrightarrow \mathbb{R}$ be a Borel measurable function.
\begin{enumerate}
\item Assume that $f$ is lower semicontinuous and bounded from below. We say that $f$ is ${\rm curl}_{(p>1)}$-Young quasiconvex, if 
\begin{align}\label{curlp}
f\left(\int_{\mathbb R^{N\times n}}\xi\, d\nu_x(\xi) \right)\leq \operatorname*{ess\,sup}_{y \in Q}  \left(\mathop{\nu_y - \rm ess\,sup}_{\xi \in \mathbb R^{N\times n}}f(\xi)\right),\ \mathrm{a.e.}\ x\in Q
\end{align}
whenever $\nu\equiv\{\nu_x\}_{x \in Q}$ is a $W^{1,p}$-gradient Young measure for every $p\in(1,\infty)$.

\item Assume that $f$ is lower semicontinuous and bounded from below. We say that $f$ is ${\rm curl}$-Young quasiconvex, if 
\begin{align}
\label{curlY}
f\left(\int_{\mathbb R^{N\times n}}\xi\, d\nu_x(\xi) \right)\leq \operatorname*{ess\,sup}_{y \in Q}  \left(\mathop{\nu_y - \rm ess\,sup}_{\xi \in \mathbb R^{N\times n}}f(\xi)\right),\ \mathrm{a.e.}\ x\in Q,
\end{align}
whenever $\nu\equiv\{\nu_x\}_{x \in Q}$ is a 
gradient Young measure.

\item Assume that $f$ is non-negative. We say that $f$ is $\mathbf{{\rm curl}-\infty}$ quasiconvex if for every $\xi \in \mathbb R^{N\times n}$
\begin{align}\label{fAinfty}
f(\xi) = \lim_{p\to +\infty} \inf\left\{\left(\int_Q f^p(\xi + D u (x))\,dx\right)^{\tfrac{1}{p}}: u \in W^{1,\infty}_{\rm per} (Q; \mathbb R^N)\right\}.
\end{align}
	
\end{enumerate}	
\end{definition}

\begin{remark}\label{remark on Young quasiconvexity defs} 
\begin{enumerate}			
\item[(i)] We observe that the double essential suprema in (1) and (2) of the previous definition are meaningful because the function $f$ is assumed to be lower semicontinuous and bounded from below. In principle, one can give definitions of ${\rm curl}_{(p>1)}$-Young and ${\rm curl}$-Young quasiconvexity without these assumptions, adding the condition that the right-hand sides of (1) and (2) make sense. This may be the approach done in \cite{APESAIM,CDP}. Next, we describe why the assumptions of lower semicontinuity and boundedness from below are sufficient for this goal. We first observe that there is no loss of generality assuming the bound from below is zero. In that case, $$\mathop{\nu_y - \rm ess\,sup}_{\xi \in \mathbb R^{N\times n}}f(\xi)=\lim_{k\to \infty}||f||_{L^k(\mathbb{R}^{N\times n};\nu_y)}.$$ This identity ensures the Lebesgue measurability of $y\mapsto\mathop{\nu_y - \rm ess\,sup}_{\xi \in \mathbb R^{N\times n}}f(\xi)$ in view of the Lebesgue measurability of $$y\mapsto\int_{\mathbb{R}^{N\times n}}|f(\xi)|^k\,d\nu_y$$ which follows from Theorem \ref{fundamental-theorem} (i), extended to normal integrands by using \cite[Corollary 6.30]{Fonseca-Leoni-Lp}. 

\item[(ii)]  The notion of ${\rm curl}_{(p>1)}$-Young quasiconvexity already appeared in \cite[eq. (3.1) in Theorem 3.1, under the name of 'generalized Jensen's inequality']{CDP}. There, the function $f$ is considered with also $x$ and $u$ dependence.

\item[(iii)] Note that, if $1\le p<q\le\infty$, then every $W^{1,q}$-gradient Young measure (gradient Young measure if $q=\infty$) is also a $W^{1,p}$-gradient Young measure. Therefore, to verify ${\rm curl}_{(p>1)}-$Young quasiconvexity it is enough to check \eqref{curlp} for parametrized measures that are $W^{1,p}$-gradient Young measures for every $p\in (p_0,\infty)$ with $p_0>1$.

\item[(iv)] The set $Q$ in the definition of ${\rm curl}$-Young quasiconvexity can be replaced by any other bounded open set as observed in \cite[Remark 4.3]{APESAIM} in the context of $\mathcal A -$quasiconvexity under a coercivity assumption.

\item[(v)] the notion of ${\rm curl}-\infty$ quasiconvexity can be found in \cite[Definition 3.3]{APSIMA}, with $\mathcal A= {\rm curl}$.
\end{enumerate}		
\end{remark}

Next we provide some characterizations of ${\rm curl}_{(p>1)}$-Young  quasiconvexity and of ${\rm curl}$-Young quasiconvexity.

\begin{proposition}\label{curlpYoungCharacterizations}
Let $f:\mathbb{R}^{N\times n}\longrightarrow \mathbb{R}$ be a lower semicontinuous function and bounded from below. Then the following conditions are equivalent
\begin{enumerate}
\item[(i)] $f$ is ${\rm curl}_{(p>1)}$-Young quasiconvex;
\item[(ii)] $f$ verifies 
\begin{align}
f\left(\int_{\mathbb R^{N \times n}}\xi\,d\nu_x(\xi)\right) \leq \mathop{\nu_x - \rm ess\,sup}_{\xi \in \mathbb R^{N\times n}} f(\xi)\
\hbox{ for a.e. } x \in Q
\end{align}
whenever $\nu\equiv\{\nu_x\}_{x \in Q}$ is a $W^{1,p}$-gradient Young measure for every $p\in(1,\infty)$;
\item[(iii)] $f$ verifies
\begin{align}\label{fcurlYoungh}
f\left(\int_{\mathbb R^{N \times n}}\xi\,d\nu(\xi)\right) \leq  \mathop{\nu - \rm ess\,sup}_{\xi \in \mathbb R^{N\times n}} f(\xi) 
\end{align} 
whenever $\nu$ is a homogeneous $W^{1,p}$-gradient Young measure for every $p\in(1,\infty)$.
\end{enumerate}
Moreover, in the definition of ${\rm curl}_{(p>1)}$-Young quasiconvexity the domain $Q$ can be replaced by any open, bounded, connected set $\Omega\subseteq\mathbb{R}^n$, with Lipschitz boundary.  

An analogous statement holds by replacing ${\rm curl}_{(p>1)}$-Young quasiconvexity by ${\rm curl}$-Young quasiconvexity and $W^{1,p}$-gradient Young measures for every $p\in(1,\infty)$ by gradient Young measures.
\end{proposition}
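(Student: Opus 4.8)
The plan is to prove the cycle of implications $(i)\Rightarrow(iii)\Rightarrow(ii)\Rightarrow(i)$, and then to deduce the domain independence from the observation that condition $(iii)$ involves no reference to any domain. The implication $(ii)\Rightarrow(i)$ is immediate: for a.e.\ $x\in Q$ one trivially has $\mathop{\nu_x - \rm ess\,sup}_{\xi \in \mathbb{R}^{N\times n}} f(\xi)\le\operatorname*{ess\,sup}_{y\in Q}\big(\mathop{\nu_y - \rm ess\,sup}_{\xi \in \mathbb{R}^{N\times n}} f(\xi)\big)$, so $(ii)$ yields \eqref{curlp}. For $(i)\Rightarrow(iii)$, I would take a homogeneous $W^{1,p}$-gradient Young measure $\nu$ (for every $p\in(1,\infty)$), view it as the constant family $\nu_x\equiv\nu$ on $Q$ --- which is in particular a $W^{1,p}$-gradient Young measure for every $p$ --- and note that $\int_{\mathbb{R}^{N\times n}}\xi\,d\nu_x=\int_{\mathbb{R}^{N\times n}}\xi\,d\nu$ while the right-hand side of \eqref{curlp} collapses to $\mathop{\nu - \rm ess\,sup}_{\xi \in \mathbb{R}^{N\times n}} f(\xi)$; this is exactly \eqref{fcurlYoungh}.

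The substantial step is $(iii)\Rightarrow(ii)$, which I would derive from the localization (blow-up) principle for gradient Young measures. Let $\nu\equiv\{\nu_x\}_{x\in Q}$ be a $W^{1,p}$-gradient Young measure for every $p\in(1,\infty)$. Fixing a generating sequence $\nabla u_j\weakly\nabla u$ in $L^p(Q;\mathbb{R}^{N\times n})$ for some $p>1$, the integrand $\xi\mapsto\xi$ is equiintegrable (since $p>1$), so Theorem~\ref{fundamental-theorem}(iii) gives that the barycenter field of $\nu$ is $\nabla u$, i.e.\ $\int_{\mathbb{R}^{N\times n}}\xi\,d\nu_x=\nabla u(x)$ for a.e.\ $x\in Q$. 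By the Kinderlehrer--Pedregal characterization of homogeneous gradient Young measures \cite{KP1991,KP1994,Rindler}, for every integer $k\ge 2$ and for a.e.\ $x_0\in Q$ the homogeneous measure $\nu_{x_0}$ is itself a homogeneous $W^{1,k}$-gradient Young measure --- this amounts to localizing at the common Lebesgue points of the maps $x\mapsto\langle\nu_x,h\rangle$, where $h$ ranges over a countable family of quasiconvex integrands of $k$-growth, using also that $\int_Q\int_{\mathbb{R}^{N\times n}}|\xi|^k\,d\nu_x\,dx<\infty$. Intersecting these sets of full measure over all $k\ge 2$, we get that for a.e.\ $x_0\in Q$ the probability measure $\nu_{x_0}$ is a homogeneous $W^{1,k}$-gradient Young measure for every $k$; since the $k$-moment and $k$-growth conditions defining these classes are monotone in $k$, $\nu_{x_0}$ is then a homogeneous $W^{1,p}$-gradient Young measure for every $p\in(1,\infty)$ (cf.\ Remark~\ref{remark on Young quasiconvexity defs}(iii)). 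Applying $(iii)$, i.e.\ \eqref{fcurlYoungh}, to $\nu_{x_0}$ yields $f(\nabla u(x_0))=f\!\big(\int_{\mathbb{R}^{N\times n}}\xi\,d\nu_{x_0}\big)\le\mathop{\nu_{x_0} - \rm ess\,sup}_{\xi \in \mathbb{R}^{N\times n}} f(\xi)$ for a.e.\ $x_0\in Q$, which, combined with the barycenter identity, is precisely $(ii)$.

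For the statement about replacing $Q$, I would observe that the only ingredients used above are the Fundamental Theorem of Young measures (Theorem~\ref{fundamental-theorem}) and the localization principle, both valid on any open, bounded, connected $\Omega\subseteq\mathbb{R}^n$ with Lipschitz boundary, together with the fact that the class of homogeneous $W^{1,p}$-gradient Young measures is intrinsic, independent of any domain. Hence repeating the three implications with $Q$ replaced by such an $\Omega$ shows that the analogue of \eqref{curlp} on $\Omega$ is equivalent to \eqref{fcurlYoungh}, hence to the original condition on $Q$. The ${\rm curl}$-Young quasiconvex case is entirely analogous: one replaces throughout ``$W^{1,p}$-gradient Young measure for every $p\in(1,\infty)$'' by ``gradient Young measure'' (i.e.\ $W^{1,\infty}$-gradient Young measure), the barycenter identity then following directly from the weak-$*$ convergence in $L^\infty$, and the localization principle for $L^\infty$-gradient Young measures requires no exhaustion in the exponent.

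The main obstacle is precisely this localization step: one must make rigorous that a.e.\ blow-up of a gradient Young measure of a prescribed integrability class is a \emph{homogeneous} gradient Young measure of the same class, and that such a blow-up can be chosen simultaneously for a countable, exhausting family of exponents. Everything else reduces to unwinding the definitions and exploiting the elementary monotonicity of the moment and growth conditions.
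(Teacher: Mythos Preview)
Your proof is correct and follows essentially the same approach as the paper: the trivial direction $(ii)\Rightarrow(i)$, the specialization $(i)\Rightarrow(iii)$ to constant families, and the localization principle (that a.e.\ slice $\nu_x$ of a $W^{1,p}$-gradient Young measure is a homogeneous $W^{1,p}$-gradient Young measure) to close the loop, with domain independence then read off from the domain-free formulation $(iii)$. The paper simply cites \cite[Proposition 5.14 and Remark 5.15]{Rindler} for the localization step, whereas you sketch it via Lebesgue points and add the (useful) observation that one must intersect the full-measure sets over a countable exhausting family of exponents to obtain a single null set valid for all $p\in(1,\infty)$.
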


\begin{remark}

 We observe that the result just stated is still true without the lower semicontinuity and boundedness from below assumptions, if conditions \eqref{curlp} and \eqref{curlY} are meaningful in the sense of Remark \ref{remark on Young quasiconvexity defs} (i).
\end{remark}

\begin{proof}[Proof]
Clearly (ii) entails (i). On the other hand, (i) is equivalent to (iii). Indeed, (i) implies (iii) because homogeneous $W^{1,p}$-gradient Young measures are particular cases of $W^{1,p}$-gradient Young measures. In turn, (iii) implies (i) taking into account that given an arbitrary $W^{1,p}$-gradient Young measure  $\{\nu_x\}_{x \in Q}$, each $\nu_x$ (for almost every $x$ fixed) is a homogeneous $W^{1,p}$-gradient Young measure ({\sl cf.} \cite[Proposition 5.14 and Remark 5.15]{Rindler}). Similarly, (i) implies (ii).

In particular, by (iii), we conclude that (i) does not depend on the domain $Q$. Actually, reasoning as above we can show that (iii) is equivalent to
$$f\left(\int_{\mathbb R^{N\times n}}\xi\, d\nu_x(\xi) \right)\leq \operatorname*{ess\,sup}_{y \in \Omega}  \left(\mathop{\nu_y - \rm ess\,sup}_{\xi \in \mathbb R^{N\times n}}f(\xi)\right),\ \mathrm{a.e.}\ x\in \Omega$$
whenever $\nu\equiv\{\nu_x\}_{x \in \Omega}$ is a $W^{1,p}$-gradient Young measure for every $p\in(1,\infty)$
where $\Omega\subseteq\mathbb{R}^n$ is any open, bounded, connected set with Lipschitz boundary.

The proof of the case of ${\rm curl}$-Young quasiconvex functions is analogous to the previous one.
\end{proof}

Next we provide a characterization of ${\rm curl}-\infty$ quasiconvexity through a power-law approximation of quasiconvex hulls.
 
\begin{proposition}\label{curlinftyqcxlsc}
Let $f:\mathbb{R}^{N\times n}\longrightarrow [0,\infty)$ be a Borel measurable function. Then $f$ is $\rm curl-\infty$ quasiconvex if and only if 
\begin{align}\label{cahrQfinfty}
f(\xi)= \lim_{p\to +\infty} (\mathcal Q (f^p))^{1/p}(\xi),
\end{align}
where $\mathcal Q (f^p)$ stands for the quasiconvex envelope of $f^p$. In particular, if $f$ is $\rm curl-\infty$ quasiconvex then $f$ is lower semicontinuous and locally bounded.
\end{proposition}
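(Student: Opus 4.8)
The plan is to read off the equivalence almost immediately from the characterisation of the quasiconvex envelope in Lemma~\ref{lemmaenvelope}, and then to deduce lower semicontinuity and local boundedness from the monotone, variational structure of the approximating densities.

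First I would fix $\xi\in\mathbb R^{N\times n}$ and $p\in[1,\infty)$ and observe that $f^p$ is again real-valued, nonnegative and Borel measurable, so Lemma~\ref{lemmaenvelope} applies to $g=f^p$ and gives
$$\mathcal Q(f^p)(\xi)=\inf\left\{\int_Q f^p(\xi+Du(x))\,dx:\ u\in W^{1,\infty}_{\rm per}(Q;\mathbb R^N)\right\}.$$
Since $t\mapsto t^{1/p}$ is a strictly increasing continuous bijection of $[0,\infty)$ onto itself, it commutes with the infimum, so the right-hand side of \eqref{fAinfty} is exactly $\lim_{p\to+\infty}\bigl(\mathcal Q(f^p)\bigr)^{1/p}(\xi)$; comparing with Definition~\ref{def more convex notions}(3) this already yields the asserted equivalence between ${\rm curl}-\infty$ quasiconvexity of $f$ and \eqref{cahrQfinfty} (the possibility that the integral diverges for some $u$ is harmless, as $u=0$ is admissible and $f^p(\xi)<\infty$).

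For the ``in particular'' part I would set $g_p:=\bigl(\mathcal Q(f^p)\bigr)^{1/p}$ and record two facts. On the one hand, each $\mathcal Q(f^p)$ is real-valued, since $0\le\mathcal Q(f^p)\le f^p<\infty$, and quasiconvex, hence continuous by Remark~\ref{rem-qcxdef}(iii); so each $g_p$ is continuous. On the other hand, for a fixed $u$ the map $p\mapsto\bigl(\int_Q f^p(\xi+Du(x))\,dx\bigr)^{1/p}$ is non-decreasing on $[1,\infty)$ by Jensen's inequality on the probability space $(Q,\mathcal L^n)$ (recall $\mathcal L^n(Q)=1$), i.e. by monotonicity of $L^p$-norms; since a pointwise infimum of non-decreasing functions is non-decreasing, $p\mapsto g_p(\xi)$ is non-decreasing for every $\xi$. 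Hence, under \eqref{cahrQfinfty}, $f=\sup_{p\ge1}g_p$ pointwise, a supremum of continuous functions, which forces $f$ to be lower semicontinuous.

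For local boundedness I would fix $R>0$ and let $C_R$ be the box of all $N\times n$ matrices with entries in $[-R,R]$. Each $\mathcal Q(f^p)$, being real-valued and quasiconvex, is rank-one convex (see \cite{Dbook}), in particular convex along each coordinate direction $e_i\otimes e_j$, i.e. separately convex; a real-valued separately convex function attains its maximum over $C_R$ at one of the finitely many vertices of $C_R$. Since $\mathcal Q(f^p)\le f^p$, this gives $\sup_{C_R}\mathcal Q(f^p)\le\bigl(\max_{v}f(v)\bigr)^p$, the maximum running over the vertices $v$ of $C_R$, hence $g_p\le\max_v f(v)$ on $C_R$ uniformly in $p$; passing to the supremum in $p$ we get $f\le\max_v f(v)<\infty$ on $C_R$, and together with $f\ge0$ this is local boundedness. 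The only genuinely non-routine point here is this uniform-in-$p$ bound: one must resist estimating through the ($p$-dependent) local Lipschitz constant of $\mathcal Q(f^p)$ and instead use the maximum principle for separately convex functions on a box.
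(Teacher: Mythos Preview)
Your argument is correct and follows essentially the same route as the paper: the equivalence \eqref{cahrQfinfty} comes straight from Lemma~\ref{lemmaenvelope} applied to $f^p$, lower semicontinuity from writing $f$ as an increasing pointwise supremum of the continuous functions $(\mathcal Q(f^p))^{1/p}$, and local boundedness from the separate convexity of $\mathcal Q(f^p)$. The only cosmetic difference is in the last step: the paper invokes \cite[Exercise~5.11]{AFP}, which bounds a separately convex function on a cube by its values at nearby lattice points with a dimensional constant $(2^{N\times n+1}-1)$ (that constant then disappears after taking the $1/p$-th root and letting $p\to\infty$), whereas you use the cleaner ``vertex maximum principle'' for separately convex functions on a box, which needs no auxiliary constant at all; both arguments exploit exactly the same structure.
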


\begin{remark}
Note that, combining \eqref{cahrQfinfty} with the several characterizations of a quasiconvex envelope provided in Lemmas \ref{lemmaenvelope} and \ref{lemmaenvelope2} (this second lemma applies to functions with linear growth), we can explicit $\rm curl-\infty$ quasiconvex functions in terms of a limit of several type of minimization problems.
\end{remark}

\begin{proof}[Proof] The characterization of $\rm curl-\infty$ quasiconvexity follows from Lemma $\ref{lemmaenvelope}$ applied to $f^p$.

Now assume that $f$ is a $\rm curl-\infty$ quasiconvex function. By Remark \ref{rem-qcxdef} (iii), $\mathcal Q (f^p)$ is lower semicontinuous. Therefore, also $(\mathcal Q (f^p))^{1/p}$ is lower semicontinuous and thus, $f$ being the limit of a monotone increasing family of lower semicontinuous functions, it is also lower semicontinuous. Finally, to show that $f$ is locally bounded, we invoke \cite[Exercise 5.11]{AFP}. Indeed, since $\mathcal Q (f^p)$ is quasiconvex, it is also separately convex and we get that, for $h\in\mathbb{N}$,
\begin{align*}
\sup_{\xi\in[-h,h]^{N\times n}} \mathcal Q (f^p)(\xi) &\le (2^{N\times n+1}-1)\max\left\{ Q (f^p)(\zeta):\ \zeta\in[-h-1,h+1]^{N\times n}\cap\mathbb{Z}^{N\times n}\right\}\\
& \le  (2^{N\times n+1}-1)\max\left\{f^p(\zeta):\ \zeta\in[-h-1,h+1]^{N\times n}\cap\mathbb{Z}^{N\times n}\right\}.
\end{align*}
implying 
\begin{align*}
\sup_{\xi\in[-h,h]^{N\times n}} \mathcal (Q (f^p))^{1/p}(\xi) &\le (2^{N\times n+1}-1)^{1/p}\max\left\{ f(\zeta):\ \zeta\in[-h-1,h+1]^{N\times n}\cap\mathbb{Z}^{N\times n}\right\}
\end{align*}
that provides a local bound for $f$ if we have in mind \eqref{cahrQfinfty}.
\end{proof}
	
As a complement to Theorem \ref{thm relations between convexity notions}, we state a result, establishing some relations between the convexity notions introduced above and those of the previous section. We will consider the following growth and coercivity conditions

\begin{align*}
(G) \qquad \exists\ C>0:\ f(\xi) \leq C(1+|\xi|)\; \hbox{ for every }\xi \in \mathbb R^{N\times n}
\end{align*}

\begin{align*}
(C) \qquad \exists\ \alpha, \beta>0:\ f(\xi)\geq \alpha |\xi|-\beta\; \hbox{ for every }\xi \in \mathbb R^{N\times n}.
\end{align*}

\begin{theorem}\label{Other} Let $f:\mathbb R^{N\times n}\to \mathbb R$ be a Borel measurable function, bounded from below.
\begin{enumerate} 
\item If $f$ is level convex then it satisfies inequality \eqref{fcurlYoungh} for every $W^{1,p}$-gradient Young measure, for $p \in (1,+\infty]$. In particular, if it is also lower semicontinuous then $f$ is ${\rm curl}_{(p>1)}-$Young quasiconvex and ${\rm curl}$-Young quasiconvex. 

\item If $f:\mathbb R^{N\times n}\to [0,+\infty)$ is level convex, lower semicontinuous, and satisfies the coercivity condition (C), then $f$ is $\rm curl-\infty$ quasiconvex.

\item\label{1a} If $f: \mathbb R^{N\times n}\to[0, +\infty)$ and polyquasiconvex with $f=g\circ T$, for $g:\mathbb{R}^{\tau(n,N)}\longrightarrow\mathbb{R}$ level convex and lower semicontinuous and satisfying $(C)$ in $\mathbb{R}^{\tau(n,N)}$ then $f$ is ${\rm curl}-\infty$ quasiconvex. 
\item If $f$ is polyquasiconvex with $f=g\circ T$, for $g:\mathbb{R}^{\tau(n,N)}\longrightarrow\mathbb{R}$ level convex then $f$ satisfies \eqref{fcurlYoungh} for every $W^{1,p}$-gradient Young measure
for $p \in (1,+\infty]$. In particular, if $f$ is also lower semicontinuous then $f$ is ${\rm curl}_{(p>1)}-$Young quasiconvex and ${\rm curl}$-Young quasiconvex. 

\item If $f$ is ${\rm curl}_{(p>1)}-$Young quasiconvex, then $f$ is  ${\rm curl}-$Young quasiconvex.

\item Let $f:\mathbb R^{N\times n}\to [0,+\infty)$ be  a ${\rm curl}_{(p>1)}-$Young quasiconvex function satisfying the coercivity condition (C). If, moreover, either $f$ is upper semi-continuous or $f$ satisfies the growth condition (G), then $f$ is $\rm curl-\infty$ quasiconvex.

\item If $f:\mathbb R^{N\times n}\to [0,+\infty)$ is ${\rm curl}-\infty$ quasiconvex then $f$ is ${\rm curl}-$Young quasiconvex.  Moreover if it also satisfies the growth condition (G), then $f$ is ${\rm curl}_{(p>1)}-$Young quasiconvex.

\item If $f:\mathbb R^{N\times n}\to [0,+\infty)$ is $\rm curl-$Young quasiconvex and locally bounded and satisfies the coercivity condition (C), then $f$ is also $\rm curl-\infty$ quasiconvex.
 Moreover, if $f$ satisfies the growth condition $(G)$, then $f$ is $\rm curl_{(p>1)}$- Young quasiconvex.

\item If $f$ is either ${\rm curl}$-Young quasiconvex or ${\rm curl}_{(p>1)}-$Young quasiconvex then $f$ is strong Morrey quasiconvex.
Moreover, if we only assume \eqref{fcurlYoungh} for every $W^{1,p}$-gradient Young measure with $p>1$ or for any gradient Young measure, then $f$ is weak Morrey quasiconvex.



Also, if $f$ satisfies \eqref{fcurlYoungh} for every gradient Young measure, or for every $W^{1,p}$-gradient Young measure for every $p>1$ then $f$ is rank-one quasiconvex. 


 \noindent 

\item If $f:\mathbb R^{N\times n}\to [0,+\infty)$ is $\rm curl-\infty$ quasiconvex, then $f$ is strong Morrey quasiconvex.
In particular, it is periodic-weak Morrey quasiconvex in any cube $C\subset \mathbb R^n$, weak Morrey quasiconvex, and  rank-one quasiconvex.
\end{enumerate}
\end{theorem}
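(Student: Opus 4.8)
The plan is to split the ten items into three families and treat each with its own tool. Items (1), (4), (5) and the last two sentences of (9) are ``Jensen/Young‑measure bookkeeping'': one inserts explicit gradient Young measures into the definitions and uses Jensen's inequality for level convex functions. Items (2), (3), (6), (7), (8) are the ``power‑law'' equivalences, handled through the characterization $f=\lim_p(\mathcal Q f^p)^{1/p}$ of Proposition \ref{curlinftyqcxlsc} together with a $\Gamma$‑convergence–type compactness argument in which the coercivity (C) is indispensable. Items (9) (first sentence) and (10) follow by feeding laminate/periodic gradient Young measures into \eqref{fcurlYoungh} and chaining with Theorem \ref{thm relations between convexity notions}. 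Concretely, for (1) a real‑valued level convex $f$ satisfies $f(\int\xi\,d\nu(\xi))\le\nu$-$\operatorname*{ess\,sup}f$ for \emph{every} Borel probability measure $\nu$ (Theorem \ref{Jensensupremalscalar}), hence for every (homogeneous) $W^{1,p}$‑gradient Young measure, $p\in(1,+\infty]$; adding lower semicontinuity makes the right‑hand sides of \eqref{curlp}–\eqref{curlY} meaningful, giving ${\rm curl}_{(p>1)}$‑ and ${\rm curl}$‑Young quasiconvexity. For (4) one uses that the minors $\mathrm{adj}_s$ are quasiaffine, so $\int T(\xi)\,d\nu_x(\xi)=T(\int\xi\,d\nu_x(\xi))$ for any gradient Young measure, and then applies (1) to $g$. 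For (5) every $W^{1,\infty}$‑gradient Young measure is a $W^{1,p}$‑gradient Young measure for all finite $p$ (Remark \ref{remark on Young quasiconvexity defs}(iii)), so the class of test measures for ${\rm curl}$‑Young quasiconvexity is contained in the one for ${\rm curl}_{(p>1)}$‑Young quasiconvexity, and the implication is immediate.

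For (7) and its corollaries: by Hölder's inequality $p\mapsto(\mathcal Q f^p)^{1/p}$ is nondecreasing and, for ${\rm curl}$‑$\infty$ quasiconvex $f$, its pointwise limit is $f$ by Proposition \ref{curlinftyqcxlsc}; each $\mathcal Q f^p$ is real‑valued, hence continuous (Remark \ref{rem-qcxdef}(iii)) and quasiconvex, so it obeys Jensen's inequality along gradient Young measures. Thus, for a homogeneous gradient Young measure $\nu$ with barycenter $\xi$, $\mathcal Q f^p(\xi)\le\int\mathcal Q f^p\,d\nu\le\int f^p\,d\nu$, whence $(\mathcal Q f^p)^{1/p}(\xi)\le\|f\|_{L^p(\nu)}$; letting $p\to\infty$ gives $f(\xi)\le\nu$-$\operatorname*{ess\,sup}f$, i.e. ${\rm curl}$‑Young quasiconvexity, and under (G) the envelopes $\mathcal Q f^p$ inherit $p$‑growth and are $W^{1,p}$‑quasiconvex (Remark \ref{W1pqcx}), so the same argument with $W^{1,p}$‑gradient Young measures yields ${\rm curl}_{(p>1)}$‑Young quasiconvexity. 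For (2) and (3) I would observe that a real‑valued level convex function, and hence any $g\circ T$ with $g$ level convex and lower semicontinuous, is locally bounded (an extreme‑point estimate on simplices), and that (C) for $g$ propagates to $f=g\circ T$ since $|T(\xi)|\ge|\xi|$; then (1)/(4) give ${\rm curl}$‑Young quasiconvexity and (8) upgrades it to ${\rm curl}$‑$\infty$ quasiconvexity. Item (6) reduces to (5) together with the remark that upper semicontinuity or (G) forces local boundedness, so (8) applies.

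The heart of the matter is (8): ${\rm curl}$‑Young quasiconvex $+$ locally bounded $+$ (C) $\Rightarrow$ ${\rm curl}$‑$\infty$ quasiconvex. Using Lemma \ref{lemmaenvelope}, choose near‑minimizers $\varphi_p\in W^{1,\infty}_{\rm per}(Q;\mathbb R^N)$ of $\mathcal Q f^p(\xi)=\inf_\varphi\int_Q f^p(\xi+D\varphi)\,dx$; since $\mathcal Q f^p(\xi)\le f^p(\xi)$, coercivity (C) gives $\int_Q(\alpha|\xi+D\varphi_p|-\beta)_+^p\,dx\le f^p(\xi)+1$, hence a bound on $\|\xi+D\varphi_p\|_{L^p(Q)}$ and, since $\mathcal L^n(Q)=1$, a bound on $\|D\varphi_p\|_{L^q(Q)}$ uniform in $p\ge q$ for every fixed $q$. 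A diagonal extraction then produces a subsequence generating a parametrized measure $\{\mu_x\}$ which is a $W^{1,q}$‑gradient Young measure for every finite $q$, with barycenter field $\xi+D\varphi_\infty$, $\varphi_\infty\in W^{1,q}_{\rm per}(Q;\mathbb R^N)$ and $\int_Q D\varphi_\infty\,dx=0$; lower semicontinuity of $\nu\mapsto\int f^q\,d\nu$ along $L^q$‑bounded sequences, followed by $q\to\infty$, yields $\lim_p(\mathcal Q f^p)^{1/p}(\xi)\ge\operatorname*{ess\,sup}_{x\in Q}(\mu_x\text{-}\operatorname*{ess\,sup}f)$. It then remains to bound this from below by $f(\xi)$: the homogeneous average $\bar\mu=\int_Q\mu_x\,dx$ has barycenter $\xi$ and $\bar\mu$-$\operatorname*{ess\,sup}f\le\operatorname*{ess\,sup}_x(\mu_x\text{-}\operatorname*{ess\,sup}f)$, but $\bar\mu$ is only a $W^{1,q}$‑gradient Young measure for finite $q$, whereas ${\rm curl}$‑Young quasiconvexity is a statement about $W^{1,\infty}$‑gradient Young measures. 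Bridging this gap is the hard part of the whole theorem: it should be done either by a Zhang‑type truncation of the generating sequence, using local boundedness of $f$ (and (G) or upper semicontinuity in the refinements) to pass to an $L^\infty$‑bounded generating sequence at a controlled cost, or by directly invoking the $\Gamma$‑limit computations of Ansini–Prinari (cf. \cite{APSIMA, APESAIM} and the equivalent definition \eqref{Qinfty}); coercivity (C) is exactly what prevents loss of mass/concentration here. The ``moreover'' of (8) follows by running the same truncation entirely in $W^{1,p}$.

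Finally, for (9) and (10): rank‑one quasiconvexity comes from evaluating \eqref{fcurlYoungh} on the first‑order laminate $\lambda\delta_\xi+(1-\lambda)\delta_\eta$ with $\mathrm{rank}(\xi-\eta)=1$, a gradient (and $W^{1,p}$‑gradient) Young measure of barycenter $\lambda\xi+(1-\lambda)\eta$ whose essential supremum of $f$ is $\max\{f(\xi),f(\eta)\}$. For weak Morrey quasiconvexity, given $\varphi\in W^{1,\infty}_0(Q;\mathbb R^N)$ I would extend $\varphi$ $Q$‑periodically and set $u_k(x)=\xi x+\tfrac1k\varphi(kx)$: the gradients $Du_k$ generate the homogeneous gradient Young measure $\nu$ with $\int h\,d\nu=\int_Q h(\xi+D\varphi)\,dx$, barycenter $\xi$ and $\nu$-$\operatorname*{ess\,sup}f=\operatorname*{ess\,sup}_{Q}f(\xi+D\varphi)$, so \eqref{fcurlYoungh} \emph{is} the weak Morrey inequality. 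For strong Morrey quasiconvexity under the full ${\rm curl}$‑Young (or, via (5), ${\rm curl}_{(p>1)}$‑Young) hypothesis I would argue by contradiction — or, more quickly, combine the weak$^*$ lower semicontinuity of the associated supremal functional (Appendix B) with Proposition \ref{seq weak* lsc implies strong Morrey}: if strong Morrey fails for some $\varepsilon,\xi,K$, take $\varphi_k$ with $\|D\varphi_k\|_{L^\infty}\le K$, $\max_{\partial Q}|\varphi_k|\le 1/k$, $f(\xi+D\varphi_k)\le f(\xi)-\varepsilon$ a.e.; multiplying by the cutoff $\psi_{\eta_k}$ (as in the proof of Proposition \ref{new characterization of quasiconvexity}) with $\eta_k\to0$ yields $\widetilde\varphi_k\in W^{1,\infty}_0(Q;\mathbb R^N)$ with uniformly bounded gradients agreeing with $\varphi_k$ on $(1-\eta_k)Q$, and $\xi+D\widetilde\varphi_k$ generates (a subsequence) a gradient Young measure $\{\mu_x\}$ supported a.e. in the closed set $\{f\le f(\xi)-\varepsilon\}$ (lower semicontinuity of the indicator of its open complement); its homogeneous average $\bar\mu$ is a homogeneous gradient Young measure of barycenter $\xi$ with $\bar\mu$-$\operatorname*{ess\,sup}f\le f(\xi)-\varepsilon$, contradicting ${\rm curl}$‑Young quasiconvexity. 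Item (10) is then immediate: ${\rm curl}$‑$\infty$ quasiconvex $\Rightarrow$ ${\rm curl}$‑Young quasiconvex by (7) $\Rightarrow$ strong Morrey quasiconvex by (9), and periodic‑weak Morrey quasiconvexity in every cube, weak Morrey quasiconvexity and rank‑one quasiconvexity follow from Theorem \ref{thm relations between convexity notions}(3),(6).
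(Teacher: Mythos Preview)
Your treatment of items (1), (4), (5), (7), and the rank-one/weak Morrey parts of (9) is essentially the paper's argument. Your direct contradiction argument for strong Morrey quasiconvexity in (9) is a correct and pleasant alternative to the paper's route (the paper simply cites \cite[Proposition 6.1]{APESAIM}, i.e.\ it passes through the weak$^*$ lower semicontinuity of the supremal functional, Theorem~\ref{LsccurlYoung}, and then Proposition~\ref{seq weak* lsc implies strong Morrey}); your cutoff-plus-averaging construction produces a homogeneous gradient Young measure supported in $\{f\le f(\xi)-\varepsilon\}$ with barycenter $\xi$, and this is a clean self-contained proof. For (10) you chain (7)$\Rightarrow$(9), while the paper cites \cite[Proposition 5.2]{PZ} directly; both are fine.

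The genuine weakness is item (8), and by your chosen architecture it contaminates (2), (3), (6). Your near-minimizer/diagonal-extraction argument produces an averaged measure $\bar\mu$ that is only a $W^{1,q}$-gradient Young measure for every finite $q$, and you correctly identify that the ${\rm curl}$-Young hypothesis speaks only about $W^{1,\infty}$-gradient Young measures; you then defer to ``Zhang-type truncation or Ansini--Prinari''. That is the whole difficulty, and the paper does \emph{not} close it by truncation. Instead it combines two external ingredients: (i) the $\Gamma$-limit identification $\Gamma\text{-}\lim_p F_p=\operatorname*{ess\,sup}\mathcal Q_\infty f(Du)$ on $W^{1,\infty}$ from \cite[Theorem 2.2, Remark 5.2]{PZ}, and (ii) the inequality $\operatorname*{ess\,sup}_Q f(Du)\le\liminf_p\big(\int_Q f^p(Du_p)\big)^{1/p}$ for $u_p\stackrel{*}{\rightharpoonup}u$ in $W^{1,\infty}$ under ${\rm curl}$-Young quasiconvexity from \cite[Theorem 4.1]{APESAIM}. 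Matching the two representations on affine functions yields $f=\mathcal Q_\infty f$. Your sketch does not supply either ingredient, and a Zhang truncation argument alone (without a growth bound from above) does not obviously control the cost on the truncated set well enough to pass to a $W^{1,\infty}$ Young measure while preserving the $\mu_x$-essential supremum bound.

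A second, independent discrepancy is your reduction of (2) (and (3)) to (8). The paper proves (2) directly and more elementarily, without any Young-measure or $\Gamma$-convergence machinery: one uses the Pasch--Hausdorff regularization $f_\lambda$ of $f$ (level convex, continuous, linear growth, and still coercive $f_\lambda(\xi)\ge\tfrac{\alpha}{2}|\xi|-\beta$ for $\lambda\ge\alpha$), applies \cite[Corollary 3.11]{P0} to get $f_\lambda=\sup_p\big((f_\lambda^p)^{**}\big)^{1/p}$, and then takes $\sup_\lambda$ to obtain $f\le\sup_p\big((f^p)^{**}\big)^{1/p}\le\sup_p(\mathcal Q(f^p))^{1/p}\le f$. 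This is worth knowing because it makes (2) and (3) independent of the harder item (8), so the gap in your (8) does not need to propagate.
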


The following picture depicts the implications stated in Theorem \ref{Other} concerning the notions introduced in this section. Note that the theorem contains broader results.

\begin{figure}[H]
\begin{tikzcd}[arrows=Rightarrow]
\ & {f \text{ level convex}} 
\arrow[out=-45,in=45]{d}{}[right]{{\rm Thm\,} \ref{thm relations between convexity notions}}
\arrow[ddl, "(1)^*", to path={[swap, pos=0.75]..controls +(-2.5,0) and  +(0,1.5).. (\tikztotarget) \tikztonodes}]
\arrow[ddr, "(2)^*_+", to path={[pos=0.75]..controls +(2.5,0) and  +(0,1.5).. (\tikztotarget) \tikztonodes}]
\arrow[ddd, "(1)^*", to path={[pos=0.2]..controls +(-2.5,0) and  +(-2.7,1.5).. (\tikztotarget) \tikztonodes}]
& \ 
\\
\ & {f \text{ polyquasiconvex}} 
\arrow[dr, "(3)^*_+", to path={[pos=0.25]..controls +(2.5,0) and  +(0,1.5).. (\tikztotarget) \tikztonodes}]
\arrow[dd, "(4)", to path={[pos=0.45]..controls +(2.5,0) and  +(2.7,1.5).. (\tikztotarget) \tikztonodes}]
\arrow[dl, "(4)", to path={[pos=0.55]..controls +(-2.5,0) and  +(0,1.5).. (\tikztotarget) \tikztonodes}]
& \ 
\\
\begin{array}{c}
f\ \mathrm{curl}_{(p>1)}\text{-Young}\\ \text{quasiconvex}
\end{array}
\arrow[rr, "(6)^*_+", to path={[pos=0.5]..controls +(3.5,0.2) and  +(-3.5,0.2).. (\tikztotarget) \tikztonodes}]
\arrow[dr, "(5)", to path={[pos=0.5]..controls +(1,-1.5) and  +(-1.2,0.2).. (\tikztotarget) \tikztonodes}]
\arrow[ddr, "(9)", to path={[swap,pos=0.65]..controls +(-2.5,0) and  +(-2.5,0).. (\tikztotarget) \tikztonodes}]
\arrow[dddr, "(9)", to path={[swap,pos=0.65]..controls +(-2.5,0) and  +(-2.5,0).. (\tikztotarget) \tikztonodes}]
\arrow[ddddr, "(9)", to path={[swap,pos=0.65]..controls +(-2.5,0) and  +(-2.5,0).. (\tikztotarget) \tikztonodes}]
& \  & \begin{array}{c}f\ \mathrm{curl}-\infty\\ \text{quasiconvex}\end{array}
\arrow[ll, "(7)^*", to path={[pos=0.5]..controls +(-3.5,-0.2) and  +(3.5,-0.2).. (\tikztotarget) \tikztonodes}]
\arrow[dl, "(7)", to path={[swap,pos=0.5]..controls +(-1,-1.5) and  +(1.2,0.2).. (\tikztotarget) \tikztonodes}]
\arrow[ddl, "(10)", to path={[pos=0.8]..controls +(2.5,0) and  +(2.5,0).. (\tikztotarget) \tikztonodes}]
\arrow[dddl, "(10)", to path={[pos=0.8]..controls +(2.5,0) and  +(2.5,0).. (\tikztotarget) \tikztonodes}]
\arrow[ddddl, "(10)", to path={[pos=0.8]..controls +(2.5,0) and  +(2.5,0).. (\tikztotarget) \tikztonodes}]
\\
\ & \begin{array}{c}f\ \mathrm{curl}\text{-Young}\\ \text{quasiconvex}\end{array}
\arrow[ur, "(8)^*_+", to path={[swap,pos=0.25]..controls +(2.5,0) and  +(0,-1.5).. (\tikztotarget) \tikztonodes}]
\arrow[ul, "(8)^*_+", to path={[pos=0.3]..controls +(-2.5,0) and  +(0,-1.5).. (\tikztotarget) \tikztonodes}]
\arrow[out=-135,in=135]{d}{}[right]{(9)}
\arrow[dd, "(9)", to path={[pos=0.75]..controls +(-2.5,0) and  +(-3.5,0).. (\tikztotarget) \tikztonodes}]
\arrow[ddd, "(9)", to path={[pos=0.75]..controls +(-2.5,0) and  +(-3.5,0).. (\tikztotarget) \tikztonodes}]
 & \ 
\\ 
\ & \ {f \text{ strong Morrey quasiconvex}} \arrow[out=-45,in=45]{d}{}[right]{{\rm Thm\,} \ref{thm relations between convexity notions}} & \ 
\\
\ & {f \text{ weak Morrey quasiconvex}} \arrow[out=-45,in=45]{d}{}[right]{{\rm Thm\,} \ref{thm relations between convexity notions}^*} &\ 
\\
\ & {f \text{ rank one quasiconvex}} & \ 
\end{tikzcd}
\caption{The figure shall be considered accompanied by Theorem \ref{Other}. Namely, the labels to each implications refer to the items in the theorem containing the referred implication and, when additional hypotheses are required, the label is signed with $^*$. The subscript + means that the corresponding implication only applies to nonnegative functions.}
\end{figure}

\begin{remark}\label{Counteremark}
\begin{enumerate}
\item[(i)] In view of (5), (1) improves \cite[Proposition 3.3 (2)]{APESAIM} in the case of $\mathcal A=\rm curl$. Observe also that the proof of (1) ensures that level convex functions satisfy condition \eqref{fcurlYoungh} without any assumption of lower semicontinuity nor on the sign of the function. These assumptions are only needed to relate with ${\rm curl}_{(p>1)}-$Young quasiconvexity.
\item[(ii)] As a by-product of the proof of (2), we have obtained a generalization of \cite[Proposition 5.1]{PZ} asserting 
$$\lim_{p\to +\infty}\left((f^p)^{**}\right)^{1/p}=f$$ under the weaker coercivity assumption $(C)$. (The notation $(f^p)^{**}$ stands, as usual, for the bidual of $f^p$, cf. \cite[Definition 2.41 (ii)]{Dbook}.
\item[(ii)] We observe that \eqref{1a} has been already proven under continuity assumptions of $g$ in \cite[Proposition 5.7]{APSIMA}.
\item[(iv)] Regarding (4), the fact that a polyquasiconvex function is also ${\rm curl}$-Young quasiconvex has been already proven in \cite[Proposition 6.3]{APESAIM} in the case of a nonnegative  and a lower senjcontinuous $g$, since their argument exploits our first implication of (7).
\item[(v)] We observe that the first implication of (7) in the above result has been proved in \cite[Proposition 3.4]{APESAIM}, in the more general setting of $\mathcal A-\infty$-Young measures, under the continuity assumption on $f$. As seen in (7), this extra requirement can be removed in the case $\mathcal A= \rm curl$. Note, however that, in this case, as observed in Remark \ref{curlinftyqcxlsc}, $f$ is lower semicontinuous.
\item[(vi)] The assertion of (9) related to rank-one quasiconvexity has been proved in \cite[Proposition 6.4 (3)]{APESAIM.} under the extra lower semicontinuity assumption. 
\item[(vii)] If $n=1$ or $N=1$, $f$ is level convex if and only if it satisfies \eqref{fcurlYoungh} for every $W^{1,p}$-gradient Young measure, for any $p \in (1,+\infty]$. In particular, if $f$ is also lower semicontinuous it is level convex (equivalently strong Morrey quasiconvex) if and only if it is $\rm curl_{(p>1)}$-Young quasiconvex and if and only if it is $\rm curl$-Young quasiconvex.  
The above considerations follow by (1) and (9), recalling that  in the scalar case, rank-one quasiconvexity reduces to level convexity. Note, however, that, with this restriction, in the proof of (9) we can invoke the zig-zag lemma, cf. \cite[Lemma 20.2]{DM}, in place of the construction of approximate solutions.   We recall that if $N=1$ there exist lower semicontinuous functions which are weak Morrey quasiconvex but neither ${\rm curl}_{(p>1)}$-Young quasiconvex nor ${\rm curl}$-Young quasiconvex.

\item[(viii)] If $n=1$ or $N=1$ and $f:\mathbb R^{N\times n}\to [0,+\infty)$ satisfies $(C)$, then it is level convex, lower semicontinuous (equivalently strong Morrey quasiconvex) if and only if it is $\rm curl-\infty$ quasiconvex. 

\end{enumerate}
\end{remark}

\begin{proof}
\begin{enumerate}
\item The argument to prove this implication follows from the fact that level convexity and Borel measurability entail the supremal Jensen's inequality. Namely
$$
f\left(\int_\Omega \varphi\,d\mu\right)\leq \mu-\operatorname*{ess\,sup}_{x\in\Omega} f(\varphi(x))
$$
for every probability measure $\mu$ on $\mathbb{R}^{d}$ supported on the open set $\Omega \subseteq \mathbb{R}^{d}$, and every $\varphi \in L^1_\mu(\Omega;\mathbb{R}^{n})$ (see Theorem \ref{Jensensupremalscalar} for a proof). 

We apply the previous inequality with $\varphi = id$, $\Omega= \mathbb R^{N\times n}$, and $\mu$ any probability measure in $\mathbb{R}^{N\times n}$. In particular, we observe that $f$ satisfies \eqref{fcurlYoungh}, whenever $\nu$ is a homogeneous $W^{1,p}$-gradient Young measure for every $p\in(1,\infty]$ which, by Proposition \ref{curlpYoungCharacterizations}, implies that $f$ is ${\rm curl}_p-$Young quasiconvex and ${\rm curl}$-Young quasiconvex provided $f$ is also lower semicontinuous.

\item This result has been obtained under the requirement that $f(\xi)\geq \alpha |\xi|$, for every $\xi \in \mathbb R^{N\times n}$ in \cite[Proposition 2.9]{APESAIM}. Here we observe that this condition can be relaxed. We start observing that
\begin{align}
\label{limsupfpaastast}\limsup_{p\to +\infty} (\mathcal Q (f^p)(\xi))^{1/p}\leq f(\xi).
\end{align}
Having in mind that $(f^p)^{**}\le \mathcal Q (f^p)$, we will prove the opposite inequality, showing that $$f(\xi)\le \limsup_{p\to +\infty} ((f^p)^{**}(\xi))^{1/p}.$$

To this end we will invoke \cite[Corollary 3.11]{P0} which relies on the relaxation result \cite[Theorem 3.9]{P0} where the coercivity is only needed to ensure boundedness of gradients. Therefore \cite[Corollary 3.11]{P0} can be generalized to the coercivity condition $f(\xi)\ge \alpha|\xi|-\beta$. However, \cite[Corollary 3.11]{P0} requires continuity and linear growth from above. To deal with these assumptions, we replace $f$ by its Pasch-Hausdorff transform, as in \cite[Proposition 2.9]{APESAIM}. This is defined as
$f_\lambda(\xi):=\inf\{\max\{f(\eta), \lambda |\xi-\eta|\}: \eta \in \mathbb R^{N\times n}\}$ for every $\lambda >0$ and it turns out that $f_\lambda$ is level convex, continuous and $f=\sup_{\lambda >0} f_\lambda$. Moreover $f_\lambda(\xi)\leq \max\{f(0), \lambda |\xi|\} \leq f(0)+ \lambda |\xi|$, so it has linear growth from above.
For what concerns the coercivity condition, it results that 
\begin{align*}
	f_\lambda(\xi)& \geq \inf\left\{\max\{\alpha|\eta|-\beta, \lambda |\xi-\eta|-\beta\}: \eta \in \mathbb R^{N\times n}\right\}\\
	& =\inf\left\{\max\{\alpha|\eta|,\lambda |\eta-\xi|\}: \eta \in \mathbb R^{N\times n}\right\}-\beta\\
	& \geq\inf\left\{\max \{\alpha |\eta|,\alpha |\eta-\xi|\}: \eta \in \mathbb R^{N\times n}\right\}- \beta; \hbox{ for } \lambda \geq \alpha.
\end{align*}
Then, in view of the level convexity of $|\cdot|$,
\begin{align*}
		f_\lambda(\xi)\geq \frac{1}{2}\alpha |\xi|-\beta, \hbox{ for } \lambda \geq \alpha.
\end{align*}

Applying \cite[Corollary 3.11]{P0} to $f_\lambda$, we have the same chain of inequalities as in \cite[Proposition 2.9]{APESAIM}, i.e. 
\begin{align*}
	f(\xi)&=\sup_{\lambda>0}f_\lambda(\xi)=\sup_{\lambda >0}(\sup_{p>1}(((f_\lambda)^p)^{**}(\xi))^{1/p}&\\
	&=\sup_{p>1}\sup_{\lambda >0}(((f_\lambda)^p)^{**}(\xi))^{1/p}\leq \sup_{p>1}((f^p)^{**}(\xi))^{1/p}.
\end{align*}
Finally, observe that, since $\{((f^p)^{**}(\xi))^{1/p}\}$ is nondecreasing, $$\sup_{p>1}((f^p)^{**}(\xi))^{1/p}=\lim_{p\to +\infty} ((f^p)^{**}(\xi))^{1/p}$$ concluding the proof.

 \item The proof develops along the lines of \cite[Proposition 5.7]{APSIMA}. Taking into account the arguments developed in (2), we can deal with the weaker assumptions of our statement, allowing us to write $$\lim_{p\to +\infty}\left((g^p)^{**}\right)^{1/p}(T(\xi))= g(T(\xi))= f(\xi).$$
On the other hand,
\begin{equation}\label{1ineq}
	f(\xi) = g(T (\xi)) = \lim_{p\to +\infty}
	((g^p)^
	{**})^{1/p}(T (\xi)) \leq \lim_{p\to +\infty}
	f_p(\xi) \leq f(\xi),\end{equation}
where, adopting the same notation as in \cite{APESAIM} and \cite{APSIMA},
$$
f_p(\xi):=\inf\left\{\left(\int_Q f^p(\xi + D u (x))\,dx\right)^{\tfrac{1}{p}}: u \in W^{1,\infty}_{\rm per} (Q; \mathbb R^N)\right\}.
$$
Indeed the polyconvex function $(g^p)^{\ast \ast}(T(\xi))\leq g^p(T(\xi))=f^p(\xi)$, from which the first inequality in \eqref{1ineq} follows.
Finally, \eqref{1ineq} concludes the proof of the statement. 

\item If follows as the proof of (1), by applying Jensen's inequality with $W^{1,p}$-gradient Young measures, ($p \in [1,\infty]$) to the function $g$, recalling that $T$ is quasiaffine and invoking \cite[Corollary 5.12]{Rindler}.

\item As observed in Remark \ref{remark on Young quasiconvexity defs}, gradient Young measures are $W^{1,p}$-gradient Young measures for every $1\le p\le\infty$. This entails the desired implication.

\item In the case $f$ is upper semicontinuous, the result follows from \cite[Theorem 3.1]{CDP} together with \cite[Theorem 4.4]{APSIMA} (note that these results are still valid under the current weaker coercivity assumption). We stress that Theorem 4.4 in \cite{APSIMA} requires the continuity of the function.

Regarding the other case, first we invoke \cite[Theorem 2.2, (61), and Remarks 3.3 and 5.1]{PZ}, where it has been proven that the $\Gamma$-limit with respect to the uniform ($L^\infty$) convergence in $C(\overline Q;\mathbb R^N)$ of 
\begin{align}\label{Ip}
F_p(u):=\left\{\begin{array}{ll}
\displaystyle \left(\int_Q f^p(D u(x))\,dx\right)^\frac{1}{p} &\hbox{ if }u \in W^{1,p}(Q;\mathbb R^N)\cap C(\overline Q;\mathbb R^N),\vspace{0.2cm}\\ 
+ \infty &\hbox{ otherwise}
\end{array}\right.
\end{align}   
is given by 
\begin{align}\label{Fgammalimit}
F(u)=\left\{\begin{array}{ll}\displaystyle \operatorname*{ess\,sup}_{x \in Q} \mathcal{Q}_\infty f(D u) & \hbox{ if }u \in W^{1,\infty}(Q;\mathbb R^N)\cap C(\overline Q;\mathbb R^N),\vspace{0.2cm}\\
+ \infty &\hbox{ otherwise}
\end{array}\right.
\end{align}
where  
\begin{align}\label{Qinfty}
\mathcal{Q}_\infty f(\xi):=\sup_{n \in\mathbb{N}} (\mathcal{Q}(f^n)(\xi))^{1/n}=\lim_{p\to \infty}(\mathcal{Q}(f^p)(\xi))^{1/p}.
\end{align}
	
On the other hand, since $f$ is $\rm curl_{(p>1)}$-Young quasiconvex,  by \cite[Theorem 3.1]{CDP}, the $\Gamma$-limit (with respect to the uniform convergence in $C(\overline Q;\mathbb R^N)$) of the functionals $F_p$ coincides with $\operatorname*{ess\,sup}_{x\in Q}f(D u(x))$, when $u\in W^{1,\infty}(Q;\mathbb R^N)\cap C(\overline Q;\mathbb R^N)$. Therefore we can conclude that 
$$\operatorname*{ess\,sup}_{x \in Q} f(D  u(x))= \operatorname*{ess\,sup}_{x \in Q} \mathcal Q_\infty f(D u(x)),$$
for every $u \in W^{1,\infty}(Q;\mathbb R^N)\cap C(\overline Q;\mathbb R^N)$. Applying the equality to linear fuctions $u$ with $D u=\xi$ arbitrary, we obtain
$$f(\xi)= \mathcal Q_\infty f(\xi).$$
Having in mind the characterization of $\rm curl-\infty$ quasiconvexity  provided by \eqref{cahrQfinfty} and recalling \eqref{Qinfty}, this proves our claim. 

\item We prove the second part of the statement, the first being very similar, just observing that \cite[Theorem 7.15]{Rindler} applies to gradient Young measures with no need of any growth condition.  Let $\{\nu_x\}_{x\in Q}$ be a parametrized measure that is a $W^{1,p}$-gradient Young measure for every $p\in(1,\infty)$. The growth assumption (G) allows to apply \cite[Theorem 7.15]{Rindler} to get, 
\begin{align}\label{zappale}
\mathcal Q (f^p)\left(\int_{\mathbb R^{N\times n}} \xi\,d \nu_x(\xi)\right)\leq \int_{\mathbb R^{N\times n}} f^p(\xi)\,d \nu_x(\xi).
\end{align}

Therefore $$\mathcal Q (f^p)\left(\int_{\mathbb R^{N\times n}} \xi\,d \nu_x(\xi)\right)\leq  \int_{\mathbb R^{N\times n}} f^p(\xi)\,d \nu_x(\xi)\leq \mathop{\nu_x - \rm ess\,sup}_{\xi \in \mathbb R^{N\times n}} f^p(\xi).$$
Taking the power $\tfrac{1}{p}$ on this inequality and passing to the limit as $p\to \infty$ we get the ${\rm curl}_{(p>1)}-$Young quasiconvexity of $f$, having in mind the assumption that $f$ is ${\rm curl}-\infty$ quasiconvex and condition \eqref{cahrQfinfty}.


\item 
The last assertion follows by the first part and $(7)$.

\noindent For the first part, start considering the functionals $F_p$ and $F$ introduced in \eqref{Ip} and \eqref{Fgammalimit}, as in the proof of (6). First, observe that \cite[(5) in Remark 5.2]{PZ}  guarantees that the $\Gamma$-limit with respect to the $L^\infty$ convergence of the restriction of the functionals $F_p$ to $ W^{1,\infty}(Q;\mathbb R^N)$ is given by the functional $F$ in \eqref{Fgammalimit}. Therefore, if we consider, for every $u \in W^{1,\infty}(Q;\mathbb R^N)$, the functional 
\begin{align}\label{Fcal}
	\mathcal F(u):=\inf\left\{
	\liminf_{p \to +\infty} \left(\int_Q f^p(D u_p)\,dx\right)^{\frac{1}{p}}:\ u_p \in W^{1,\infty}(Q;\mathbb R^N), \right.
	\\
	\left. \sup_ p\|u_p\|_{W^{1,\infty}} <+\infty, u_p \to u \hbox{ in }L^\infty  \right \}, \nonumber
\end{align}
one has
 \begin{align}\label{<=Fcal}
 	\operatorname*{ess\,sup}_{x \in  Q} \mathcal Q_\infty f(D u(x))=F(u) \leq \mathcal F(u) \hbox{ for every }u \in W^{1,\infty}(Q;\mathbb R^N).
 	\end{align}
 
In order to prove the opposite inequality, {\sl i.e.} 
\begin{align}\label{>=Fcal}
\mathcal F(u) \leq \operatorname*{ess\,sup}_{x \in  Q} \mathcal Q_\infty f(D u(x)) \hbox{ for every }u \in W^{1,\infty}(Q;\mathbb R^N),
\end{align} we start observing that, under our coercivity $(C)$ assumptions on $f$, by \cite[Theorem 9.1]{Dbook} and \cite[Proposition 6.16]{DM}, for every $p$, the functional
\begin{align}
\inf\left\{\liminf_{n \to +\infty}\left(\int_Q f^p(D u_n(x))\right)^{\frac{1}{p}}: \sup_n\|u_n\|_{W^{1,\infty}}<+\infty, u_n \to u \hbox{ in }L^\infty\right\} \nonumber\\
=\left(\int_Q Q(f^p)(D u(x))\,dx\right)^{\frac{1}{p}}.\label{relpinfty}
\end{align} 
Hence, by \cite[Proposition 6.11]{DM}, in view of \eqref{relpinfty}, \eqref{Fcal} can be written as
\begin{align*}
\mathcal F(u)= 
\inf\left\{
\liminf_{p \to +\infty} \left(\int_Q Q(f^p)(D u_p)\,dx\right)^{\frac{1}{p}}:\ u_p \in W^{1,\infty}(Q;\mathbb R^N), \right.
\\
\left. \sup_ p\|u_p\|_{W^{1,\infty}} <+\infty, u_p \to u \hbox{ in }L^\infty  \right \}.
\end{align*}

Thus,
 \begin{align*}
 \mathcal F(u) &\leq  \liminf_{p\to +\infty} \left(\int_Q Q(f^p)(D u(x))\,dx\right)^{\frac{1}{p}}\\
  &\leq \liminf_{p\to +\infty} \left(\operatorname*{ess\,sup}_{x \in Q} Q(f^p) (D u(x))\right)^{\frac{1}{p}} \\
  &=\liminf_{p\to +\infty}\operatorname*{ess\,sup}_{x \in Q} (Q(f^p))^{\frac{1}{p}}(D u(x))\\
  &\leq \operatorname*{ess\,sup}_{x \in Q}\mathcal Q_\infty f(D u(x))
 \end{align*}
where we have exploited \cite[Proposition 6.8 and 6.11]{DM}, and the definition of $\mathcal Q_{\infty} f$ and the monotonicity of $(Q(f^p))^{1/p}$.
 
Now we have that 
\begin{align}\label{Fcalweak}
\mathcal F(u)=\inf\left\{
\liminf_{p \to +\infty} \left(\int_Q f^p(D u_p(x))\,dx\right)^{1/p}: u_p \in W^{1,\infty}(Q;\mathbb R^N), \right.
\\
\left. \sup_ p\|u_p\|_{W^{1,\infty}} <+\infty, u_p \overset{\ast}{\rightharpoonup} u \hbox{ in }W^{1,\infty}(Q;\mathbb R^N)  \right \}, \nonumber
\end{align}
Indeed, the inequality  `$\leq $' is consequence of Rellich theorem and the opposite one follows by Banach-Alaoglu-Bourbaki theorem. 

To finish our proof, we recall that, under the assumption of $\rm curl$-Young quasiconvexity on $f$, \cite[ Theorem 4.1]{APESAIM} provides 
\begin{align}\label{<=weakFcal}
	\operatorname*{ess\,sup}_{x \in Q} f(D u(x)) \leq \liminf_{p \to +\infty} \left(\int_Q f^p(D u_p(x))\,dx \right)^{1/p},
\end{align}  for every $u_p \overset{\ast}{\rightharpoonup} u$ in $W^{1,\infty}(Q;\mathbb R^N)$, i.e.
$$\operatorname*{ess\,sup}_{x \in Q} f(D u(x)) \leq \mathcal F(u) \hbox{ for every }u \in W^{1,\infty}(Q;\mathbb R^N).$$
The opposite inequality is a consequence of the standard approximation of $L^\infty$ norm by the $L^p$ one, {\sl i.e.}
\begin{align*}
\mathcal F(u) \leq \liminf_{p\to +\infty}\left(\int_Q f^p(D u(x))\,dx\right)^{\frac{1}{p}}=\operatorname*{ess\,sup}_{x \in Q }f(D u(x)).
\end{align*}

Finally, putting together the representations for $\mathcal F$, obtained for \eqref{Fcal} and its equivalent \eqref{Fcalweak}, we can conclude that 
$$\operatorname*{ess\,sup}_{x \in  Q} f(D u(x)) = \operatorname*{ess\,sup}_{x \in  Q} \mathcal Q_\infty f(D u(x))$$
for every $u \in W^{1,\infty}(Q;\mathbb R^N)$. This, in turn, provides the equality $f(\xi)= \mathcal Q_\infty f(\xi)$ for every $\xi \in \mathbb R^{N \times n}$, which proves the $\rm curl-\infty$ quasiconvexity of $f$.

\item By (5), to prove the first statement it is enough to consider the case $f$ is ${\rm curl}$-Young quasiconvex. Then the result follows from \cite[Proposition 6.1]{APESAIM}, which in turn relies on the lower semicontinuity result in Theorem \ref{LsccurlYoung} (see also \cite[Theorem 4.2]{APESAIM} for another argument). 

For what concerns the second implication, let $\xi\in\mathbb{R}^{N\times n}$, $\varphi\in W_0^{1,\infty}(Q;\mathbb{R}^{N})$, $u(x)=\xi\cdot x+\varphi(x)$ and the push-forward measure $\nu=D u_{\sharp}\mathcal{L}^{N\times n}$. As observed in \cite[page 60]{AFP}, the measure $\nu$ coincides with the generalized product $\mathcal{L}^n\otimes\delta_{Du(x)}$. Considering \cite[Theorem 2.1]{KP1991}, we observe that this latter measure is a homogeneous gradient Young measure whose barycenter is $\xi$. Hence, by \eqref{fcurlYoungh},
\begin{align*}
	f(\xi)=f\left(\int_Q Du(x)\,dx\right)=f\left(\int_{\mathbb{R}^{N\times n}} \xi\,d\nu(\xi)\right)\leq \operatorname*{\nu-ess\,sup}_{\xi \in \mathbb{R}^{N\times n}}f(\xi)=\\
	=\operatorname*{\mathcal{L}^n\otimes\delta_{Du(x)}-ess\,sup}_{(x,\xi) \in Q\times\mathbb{R}^{N\times n}}f(\xi)= \operatorname*{ess\,sup}_{x \in  Q}f(Du(x))
\end{align*}
proving the weak Morrey quasiconvexity.

For the next implication, let $\xi,\eta\in\mathbb{R}^{N\times n}$ be such that $\mathrm{rank}(\xi-\eta)=1$ and let $0<\lambda<1$. One needs to show that 
$$f(\lambda\xi+(1-\lambda)\eta)\le \max\{f(\xi),f(\eta)\}.$$ Considering the construction of approximate solutions carried out in \cite[page 97]{mullerlecturenotes}, one gets a sequence $(u_j)\subseteq W^{1,\infty}(Q;\mathbb{R}^N)$ such that $u_j(x)=(\lambda\xi+(1-\lambda)\eta)\cdot x$ on $\partial Q$ and ${\rm dist}(Du_j,\{\xi,\eta\})\to 0$ in measure in $Q$. As discussed in \cite[page 120]{mullerlecturenotes}, $Du_j$ generates the homogeneous Young measure $\nu_x=\lambda\delta_\xi+(1-\lambda)\delta_\eta$. This measure has $\lambda\xi+(1-\lambda)\eta$ as barycenter, thus 
\begin{align*}
	f(\lambda\xi+(1-\lambda)\eta)=f\left(\int_{\mathbb{R}^{N\times n}}\zeta\,d\nu_x(\zeta)\right)\leq \operatorname*{\nu_x-ess\,sup}_{\zeta \in \mathbb{R}^{N\times n}} f(\zeta)=\max\{f(\xi),f(\eta)\}. 
\end{align*}
where we have also used \eqref{fcurlYoungh}.


\item
The first implication is a consequence of \cite[Proposition 5.2]{PZ} together with \eqref{cahrQfinfty}.

\noindent For what concerns the second implication since $f$ is strong Morrey quasiconvex and, invoking Proposition \ref{proposition independence of domain}, we get that $f$ is also strong Morrey quasiconvex in any other cube $C$. Hence, using (3) in Theorem \ref{thm relations between convexity notions}, we conclude that $f$ is periodic-weak Morrey quasiconvex in the cube $C$. The last parts are also consequences of (3) and (6) in Theorem \ref{thm relations between convexity notions}.
\end{enumerate}
\end{proof}

We observe that many implications in Theorem \ref{Other} do not invert. Indeed, we have the following result.

\begin{example}	\label{EXasymptoticresults}
\begin{enumerate}
\item In \cite[Example 6.6]{APESAIM} it has been shown that the function ${\arctan}({\rm det}): \xi \in \mathbb R^{2 \times 2} \to [-\pi/2,\pi/2]$ is polyquasiconvex (hence ${\rm curl}$-Young quasiconvex)  but
not level convex.
\item The example provided in \cite[Example 6.7]{APESAIM} shows that to ensure that a ${\rm curl}$-Young quasiconvex function is ${\rm curl}-\infty$ quasiconvex, some additional assumption, like $(C)$ in (8) of Theorem \ref{Other}, should be imposed. Indeed the function $W:\mathbb R^{2 \times 2} \to \mathbb [0,+\infty]$, defined as 
	$$W(\xi)= \sup\{h(|\xi|), k(\xi)\},$$ with $h$ and $k$ given by
	$k(\xi):= \arctan (\rm det \xi)$ and 
	$h(t)= \left\{
	\begin{array}{ll} 0 &\hbox{ if } t \leq 1,
		\\
		t-1 &\hbox{ if } 1 \leq  t \leq 2,\\
		1 &\hbox{ if  }t \geq 2\end{array}
	\right. $
	is ${\rm curl}$-Young quasiconvex but neither ${\rm curl}-\infty$ quasiconvex, nor quasiconvex,  nor level convex.
	
\item In \cite[Proposition 5.9]{APSIMA} it has been observed that 
the continuous function $f : \mathbb R \to [0,+\infty)$ given by
	$f(t) :=\left\{
	\begin{array}{ll}
		0  & \hbox{ if } t \leq 0,\\
	t & \hbox{ if } 0 \leq t\leq 1,\\
	1 &\hbox{ if } t \geq 1.
\end{array}
	\right.$
	is periodic-weak Morrey quasiconvex and strong Morrey quasiconvex since it is level
	convex. On the other hand, it has been proven that $f$ is not ${\rm curl}-\infty$ quasiconvex.
\end{enumerate}
\end{example}

\begin{proposition}

\begin{enumerate}
\item There exist ${\rm curl}$-Young quasiconvex functions which are not level convex, i.e. (1) in Proposition \ref{Other} does not invert.
\item There exist level convex and polyquasiconvex functions which are not ${\rm curl}-\infty$ quasiconvex. 
\item A function $f$ which satisfies \eqref{fcurlYoungh}  for every homogeneous gradient Young measure is not necessarily ${\rm curl}-\infty$ quasiconvex. Moreover, even if the function is assumed lower semicontinuous and bounded from below, \eqref{fcurlYoungh} does not imply that the function is ${\rm curl}-\infty$ quasiconvex, i.e. $\rm curl$-Young quasiconvexity does not imply ${\rm curl}-\infty$ quasiconvexity. 
\item There exist lower semicontinuous level convex (hence lower semicontinuous and polyquasiconvex, strong Morrey quasiconvex, periodic-weak Morrey quasiconvex, weak Morrey quasiconvex and rank-one quasiconvex)  functions which are not ${\rm curl}-\infty$ quasiconvex. 

\end{enumerate}
\end{proposition}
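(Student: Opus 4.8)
The plan is that every assertion is witnessed by a function that has already appeared in Examples \ref{arctanex} and \ref{EXasymptoticresults}, so the proof reduces to identifying the right example for each item and then tracing the relevant implications of Theorems \ref{thm relations between convexity notions} and \ref{Other}; no new construction is needed.

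For (1) I would take $f=\arctan(\det)$ on $\mathbb R^{2\times2}$, as in Example \ref{EXasymptoticresults}(1). Writing $T(\xi)=(\xi,\det\xi)$ and $g(\xi,d):=\arctan d$ on $\mathbb R^{\tau(2,2)}=\mathbb R^{5}$, one checks that $g$ is level convex, since its sublevel sets $\{g\le c\}$ are the half-spaces $\{d\le\tan c\}$ (or $\emptyset$, or all of $\mathbb R^{5}$), so that $f=g\circ T$ is polyquasiconvex; being also continuous and bounded from below, Theorem \ref{Other}(4) yields that $f$ is ${\rm curl}$-Young quasiconvex, while it is not level convex by Example \ref{EXasymptoticresults}(1). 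Any $f$ as in Example \ref{arctanex} that is not level convex would serve equally well.

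For (2), (3) and (4) I would use the single scalar function $f:\mathbb R\to[0,1]$ of Example \ref{EXasymptoticresults}(3), given by $f(t)=0$ for $t\le0$, $f(t)=t$ for $0\le t\le1$, $f(t)=1$ for $t\ge1$. It is continuous, hence lower semicontinuous and bounded from below, and level convex, so in the scalar case it is also polyquasiconvex by Theorem \ref{thm relations between convexity notions}(7), and by Theorem \ref{thm relations between convexity notions}(1) it is strong Morrey, periodic-weak Morrey, weak Morrey and rank-one quasiconvex. By Theorem \ref{Other}(1) --- together with the remark following it, which stresses that level convexity alone already yields \eqref{fcurlYoungh} --- the function $f$ satisfies \eqref{fcurlYoungh} for every $W^{1,p}$-gradient Young measure with $p\in(1,+\infty]$, in particular for every (homogeneous) gradient Young measure, and, being lower semicontinuous and bounded from below, it is ${\rm curl}$-Young quasiconvex. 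On the other hand, Example \ref{EXasymptoticresults}(3) records that $f$ is not ${\rm curl}-\infty$ quasiconvex. This proves (2) at once; it proves (3), since $f$ is also lower semicontinuous and bounded from below while being ${\rm curl}$-Young quasiconvex; and it proves (4), since the listed properties all follow from lower semicontinuous level convexity. For the ``moreover'' part of (3) one may alternatively invoke the function $W$ on $\mathbb R^{2\times2}$ of Example \ref{EXasymptoticresults}(2), which is continuous, nonnegative and ${\rm curl}$-Young quasiconvex but not ${\rm curl}-\infty$ quasiconvex.

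I do not expect a real obstacle here: the substantive work is contained in the examples, which are borrowed from \cite{APESAIM,APSIMA}. The only points needing a line of verification are that the scalar $f$ is polyquasiconvex (immediate from the scalar equivalence between level convexity and polyquasiconvexity) and that it satisfies \eqref{fcurlYoungh} with no coercivity hypothesis, which is exactly what the supremal Jensen inequality of Theorem \ref{Jensensupremalscalar} provides for Borel level convex functions. The failure of ${\rm curl}-\infty$ quasiconvexity is, at the heuristic level, forced by the boundedness of $f$, which precludes the coercivity condition $(C)$ underlying the positive ${\rm curl}-\infty$ results of Theorem \ref{Other}(2) and (8).
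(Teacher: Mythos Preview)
Your proof is correct. For item (1) and item (4) you use exactly the same examples as the paper. For items (2) and the first sentence of (3), however, your route differs from the paper's: you recycle the continuous scalar function of Example \ref{EXasymptoticresults}(3) as a single witness for (2), (3), and (4), whereas the paper instead invokes Example \ref{exampleB} (the non-lower-semicontinuous characteristic function $\chi_{\{\xi_1^1\ge 1\}}$) together with the structural fact from Proposition \ref{curlinftyqcxlsc} that ${\rm curl}{-}\infty$ quasiconvex functions are automatically lower semicontinuous. The paper's argument is thus a one-line obstruction (failure of lsc) rather than a direct appeal to a known counterexample, and it separates cleanly the two mechanisms by which ${\rm curl}{-}\infty$ quasiconvexity can fail: lack of lower semicontinuity versus lack of coercivity. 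Your approach, on the other hand, is more economical in that one function covers three items at once, and it has the mild advantage of producing for (2) a \emph{continuous} level convex and polyquasiconvex function that is not ${\rm curl}{-}\infty$ quasiconvex, which is a slightly sharper statement than what the paper's non-lsc example gives. For the ``moreover'' part of (3) the paper points to Example \ref{EXasymptoticresults}(2); your scalar example works there too, and you correctly mention Example \ref{EXasymptoticresults}(2) as an alternative.
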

\begin{proof}
	Condition (1) follows from Example \ref{EXasymptoticresults}(1).
	
	Condition (2) relies on the fact that ${\rm curl}-\infty$ quasiconvex functions are lower semicontinuous (cf. Proposition \ref{curlinftyqcxlsc}), while level convexity and polyquasiconvexity do not entail lower semicontinuity (cf. Example \ref{exampleB}).

The first part of (3) follows by the same argument as in (2).
Indeed, condition \eqref{fcurlYoungh} follows from (1) in Theorem \ref{Other} and by Proposition \ref{curlinftyqcxlsc}, a function that is not lower semicontinuous, it is not ${\rm curl}-\infty$ quasiconvex. To prove the second assertion of (3), it suffices to refer to  Example \ref{EXasymptoticresults}(2).

The proof of (4) follows by  Example \ref{EXasymptoticresults}(3).
\end{proof}

In the following result we show a partial inversion of (3) in Proposition \ref{Other}. It stems ideas from \cite[Theorem 4.1]{BM} and \cite[Theorem 5.46]{Dbook}. 

\begin{proposition}\label{BallMuratProposition}
Let $f:\mathbb R^{N\times n}\to [0,+\infty)$ be a ${\rm curl}-\infty$ quasiconvex function, of the form 
\begin{align}
	\label{gaff}f(\xi)= g(\alpha + \langle \beta, T(\xi) \rangle),
\end{align}
with $\alpha \in \mathbb R$ and $\beta \in \mathbb R^{\tau(N, n)}$ (see \cite[ Theorem 5.46]{Dbook}),
or in particular  $f:\mathbb R^{N\times N}\to [0,+\infty)$
\begin{align}\label{gdet}
	f(\xi):=g (\det\xi),\end{align}
where $g:\mathbb R\to [0,+\infty)$ is a lower semicontinuous function and
there exist $a_1 >0, a_2 \in \mathbb R$ such that for every $\delta \in \mathbb R$
$$g(\delta)\geq a_1 |\delta|+ a_2.$$
Then $g$ is level convex, hence $f$ is polyquasiconvex.
\end{proposition}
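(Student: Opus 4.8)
The plan is to prove that $g$ is level convex; the polyquasiconvexity of $f$ then follows at once, since if $g$ is level convex then $G:\mathbb{R}^{\tau(n,N)}\to\mathbb{R}$ defined by $G(Y):=g(\alpha+\langle\beta,Y\rangle)$ is level convex as well (pre-composition of the level convex $g$ with the affine map $Y\mapsto\alpha+\langle\beta,Y\rangle$ preserves level convexity, as one checks directly), and $f=G\circ T$, so $f$ is polyquasiconvex by Definition \ref{def convex notions}(2). Hence it suffices to fix $\delta_1,\delta_2\in\mathbb{R}$ with $\delta_1\neq\delta_2$ and $\lambda\in(0,1)$, and to show $g(\lambda\delta_1+(1-\lambda)\delta_2)\le\max\{g(\delta_1),g(\delta_2)\}$.

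The tool is that, by item (10) of Theorem \ref{Other}, a ${\rm curl}-\infty$ quasiconvex function is strong Morrey quasiconvex and therefore periodic-weak Morrey quasiconvex in every cube $C\subseteq\mathbb{R}^n$. I will feed into this inequality an exact first-order periodic laminate adapted to the relevant quasiaffine function $\ell$, namely $\ell(\xi):=\alpha+\langle\beta,T(\xi)\rangle$ in the general case \eqref{gaff} and $\ell(\xi):=\det\xi$ in the particular case \eqref{gdet}, so that $f=g\circ\ell$. The first step is to locate rank-one connected matrices $\xi_1,\xi_2\in\mathbb{R}^{N\times n}$ with $\ell(\xi_1)=\delta_1$ and $\ell(\xi_2)=\delta_2$. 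In the determinant case this is explicit: take $\xi_i:=\mathrm{diag}(\delta_i,1,\dots,1)$, so that $\det\xi_i=\delta_i$ and $\xi_1-\xi_2=(\delta_1-\delta_2)\,e_1\otimes e_1$ has rank one. In the general case, $\ell$ is quasiaffine and non-constant (we take $\beta\neq0$, since otherwise $f$ is constant and there is nothing to prove about $g$); by the rank-one affinity of $\ell$ (cf. \cite[Lemma 5.5]{Dbook}) and the fact that $\mathbb{R}^{N\times n}$ is rank-one connected, $\ell$ must be non-constant along some rank-one line $L$, on which it is a non-constant affine function of the line parameter, hence surjective; so one can pick $\xi_1\in L$ with $\ell(\xi_1)=\delta_1$ and then $\xi_2\in L$ with $\ell(\xi_2)=\delta_2$, and $\xi_1,\xi_2$ are rank-one connected.

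Now set $\xi_\lambda:=\lambda\xi_1+(1-\lambda)\xi_2$; by rank-one affinity of $\ell$, $\ell(\xi_\lambda)=\lambda\delta_1+(1-\lambda)\delta_2$. By the periodic first-order laminate construction used in the proof of Theorem \ref{thm relations between convexity notions}(6) — working in a cube $C=RQ$ with $R\in\mathcal{SO}(n)$ mapping $e_1$ to the normal of the rank-one direction $\xi_1-\xi_2$, with $C=Q$ sufficing in the determinant case — there is $\varphi\in W^{1,\infty}_{\rm per}(C;\mathbb{R}^N)$ with $D\varphi\in\{(1-\lambda)(\xi_1-\xi_2),-\lambda(\xi_1-\xi_2)\}$ a.e., hence $\xi_\lambda+D\varphi\in\{\xi_1,\xi_2\}$ a.e. in $C$. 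Applying periodic-weak Morrey quasiconvexity of $f$ in $C$ to $\xi_\lambda$ and $\varphi$,
\[
g(\lambda\delta_1+(1-\lambda)\delta_2)=g(\ell(\xi_\lambda))=f(\xi_\lambda)\le\operatorname*{ess\,sup}_{x\in C}f(\xi_\lambda+D\varphi(x))=\operatorname*{ess\,sup}_{x\in C}g(\ell(\xi_\lambda+D\varphi(x)))\le\max\{g(\delta_1),g(\delta_2)\},
\]
which is exactly the level convexity inequality for $g$.

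The main obstacle is the general-case construction: producing a rank-one connected pair $(\xi_1,\xi_2)$ on which the prescribed quasiaffine function $\ell$ attains the two prescribed values, and ensuring the laminate is an \emph{exact} two-gradient field (rather than only an approximate one), which is what allows control of the essential supremum. In the determinant case both points are immediate from the explicit diagonal matrices, and in general they follow from the rank-one affinity and surjectivity of quasiaffine functions together with the laminate already built in the proof of Theorem \ref{thm relations between convexity notions}(6); the remaining verifications — level convexity of $G$ and $f=G\circ T$ — are routine. (The lower semicontinuity and linear coercivity of $g$ in the hypothesis play no role in this particular argument and are retained to match the formulations of \cite[Theorem 4.1]{BM} and \cite[Theorem 5.46]{Dbook}.)
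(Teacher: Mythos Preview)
Your proof is correct and takes a genuinely different route from the paper's. The paper argues via the power-law characterization \eqref{cahrQfinfty}: it computes $\mathcal{Q}(f^p)$ explicitly using \cite[Theorem~6.24]{Dbook} as $(g^p)^{**}\circ(\alpha+\langle\beta,T(\cdot)\rangle)$, and then invokes the identity $\lim_{p\to\infty}((g^p)^{**})^{1/p}=g^{\rm lslc}$ (from \cite{PZ}, valid under the coercivity $g(\delta)\geq a_1|\delta|+a_2$) to conclude $g=g^{\rm lslc}$, whence level convexity. Your argument instead appeals only to Theorem~\ref{Other}(10) to get rank-one quasiconvexity of $f$ and then exploits the rank-one affinity of $\ell=\alpha+\langle\beta,T(\cdot)\rangle$ together with a surjectivity argument along a single rank-one line. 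This is more elementary and, as you observe, does not actually use the lower semicontinuity or coercivity of $g$ --- so you have in effect proved a slightly sharper statement. The paper's approach, on the other hand, yields the quantitative identification $f=g^{\rm lslc}\circ\ell$ as a by-product, which is of independent interest. One small remark: the periodic laminate construction you carry out is superfluous, since Theorem~\ref{Other}(10) already hands you rank-one quasiconvexity directly; you could simply write $g(\lambda\delta_1+(1-\lambda)\delta_2)=f(\lambda\xi_1+(1-\lambda)\xi_2)\le\max\{f(\xi_1),f(\xi_2)\}=\max\{g(\delta_1),g(\delta_2)\}$ once the $\xi_i$ are found.
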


\begin{proof}[Proof]
By \eqref{cahrQfinfty}, \cite[Theorem 6.24]{Dbook},\cite[Proposition 5.1]{PZ} and the arguments used in (2) of Proposition \ref{Other} to weaken the coercivity condition,
\begin{align}\label{cargdetinfty}
f(\xi) = \lim_{p\to +\infty} \left(\mathcal{Q}(f^p)(\xi)\right)^{\frac{1}{p}} 
 = \lim_{p\to +\infty} (\mathcal{Q}(g^p(\alpha + \langle \beta, T(\xi)\rangle)))^\frac{1}{p} \\
 = \lim_{p\to +\infty} ((g^p)^{\ast \ast}(\alpha + \langle \beta, T(\xi)\rangle)))^{\frac{1}{p}}
= g^{\rm lslc}(\alpha + \langle \beta, T(\xi)\rangle)),
\end{align}
where $g^{\rm lslc}$ denotes the lower semicontinuous and level convex envelope of $g$ (see \cite{RZ}). Hence $f$ is polyquasiconvex.
Putting together \eqref{gdet} and \eqref{cargdetinfty}, from the arbitrariness of $\alpha + \langle \beta, T(\xi)\rangle \in \mathbb R$, we obtain the level convexity of $g$.
\end{proof}

The previous analysis leaves several open questions about the convexity notions treated in this section. 

\begin{enumerate}
\item We have seen that ${\rm curl}_{(p>1)}$-Young quasiconvexity, ${\rm curl}-$Young quasiconvexity, ${\rm curl}-\infty$ quasiconvexity, imply strong Morrey quasiconvexity. 
One may wonder if strong Morrey quasiconvexity is indeed below in this hierarchy, in the class of coercive functions. Note that the function in Example \ref{EXasymptoticresults} (3) is not coercive. In the same spirit one may wonder if weak Morrey quasiconvexity entails ${\rm curl}-\infty$ quasiconvexity in the class of functions satisfying $(C)$.
\item In (7) and (8) we have proven that  ${\rm curl}_{(p>1)}$-Young quasiconvexity, ${\rm curl}-$Young quasiconvexity and ${\rm curl}-\infty$ quasiconvexity are equivalent, but it is not known if the assumptions are sharp.

\item In (8) we obtained that ${\rm curl}$-Young quasiconvex functions are ${\rm curl}_{(p>1)}$-Young quasiconvex requiring that $f$ is non-negative since our proof relies on proving that $f$ is also ${\rm curl}-\infty$ quasiconvex.
One may wonder if a direct proof can be provided removing this artificial assumption.

\end{enumerate}

 \begin{remark}\label{finalrem4}
 As regard as question (1) dealing with coercivity, it is worth to recall that \cite[Theorem 4.2]{APSIMA} in the $\rm curl$-free setting, has been improved in \cite[Theorem 2.2]{PZ}. Indeed, the Carath\'eodory assumption has been relaxed to Lebesgue $\otimes$ Borel-measurability for $f$, but under a linear growth constraint from above and below on $f(x, \cdot)$ for a.e. $x \in \Omega$, obtaining a strong Morrey quasiconvex limiting density, $f_\infty$ (see \cite[eq. (61) and (62)]{PZ}), i.e. $\Gamma-\lim_{p\to +\infty}(\int_\Omega f^p(x, \nabla u(x))dx)^\frac{1}{p} = \operatorname*{ess\,sup}_{x \in \Omega} f_\infty(x,\nabla u(x))$. Moreover, it has been proven that $f_\infty$ coincides with $Q_\infty f$ either assuming in addition upper semicontinuity on $f(x,\cdot$) or in the homogeneous setting, requiring just Borel measurability and (up to a constant) \eqref{coerci} (see \cite[Remark 5.2]{PZ}). This latter case reinforces the curiosity towards proving (or disproving) that strong Morrey quasiconvexity and coercivity could imply ${\rm curl}-\infty$ quasiconvexity.
 
 \smallskip
 Moreover by the above mentioned $\Gamma$-convergence results in \cite{APSIMA} and \cite{PZ}, it follows that ${\rm curl}-\infty$ quasiconvexity is necessary and sufficient for the $L^p$ approximation under continuity and coercivity assumptions, but it is not completely clear if this notion is really necessary under the sole assumption of lower semicontinuity. 
  
 Then, question (1) above can be rephrased as follows: despite the scalar case, as already emphasized in Section 3, it is currently open the relaxation for supremal functionals, leaving ${\rm curl}-\infty$ quasiconvexity (and ${\rm curl}$-Young quasiconvexity) just a sufficient condition for the lower semicontinuity of functionals of the type \eqref{suppl}, but it is not known its necessity even under coercivity assumptions.  In the same order of ideas, departing from the results in \cite{PZ} one can observe that strong-Morrey quasiconvexity is necessary for power-law approximations, while it is currently open its sufficiency.

 \smallskip
 
 Finally, regarding the scalar case, we underline that, in the homogeneous and scalar case in \cite[Remark 5.2]{PZ}, i.e. $v(x)=\nabla u(x)\in \mathbb R^{n\times N}$, with either $n=1$ or $N=1$, the limiting density $f_\infty$ coincides with the level convex and lower semicontinuous envelope of $f$, thus recovering the results in \cite{APESAIM}, cf. Proposition 2.9 and Theorem 2.10 therein, and the particular cases discussed in \cite{GNP, BGP, CDP, BPZ}, and leads by means of power-law approximation strategies, to the same conclusions as $(viii)$ in Remark \ref{Counteremark}.
 \end{remark}


\color{black}
\section{Relating convexity definitions from the integral and supremal settings}

It is easily seen that convexity is a sufficient condition to level convexity. Indeed, we will see that the convexity notions arising on the  minimization of integral functionals are strictly stronger than the corresponding ones  
appearing in the minimization of supremal functionals.

\begin{proposition}\label{4.13} Let $f :\mathbb R^{N\times n}\to \mathbb R$.
\begin{enumerate}
\item\label{pa} If $f$ is polyconvex, {\sl i.e.}, for some convex function $g:\mathbb{R}^{\tau(n,N)}\longrightarrow \mathbb{R}$, 
$$f(\xi)=g(T(\xi)), \forall\ \xi\in\mathbb{R}^{N\times n}$$ being $\tau(n,N)$ and $T(\xi)$ as in Definition \ref{def convex notions}, then $f$ is polyquasiconvex.
\item\label{3a}  If $f$ is quasiconvex, then $f$ is ${\rm curl}$-Young quasiconvex.
\item\label{qia} If $f$ is quasiconvex, then $f$ is strong Morrey quasiconvex. 
\item\label{qa} If $f$ is quasiconvex and non-negative then $f$ is ${\rm curl}-\infty$ quasiconvex.
\item \label{rkone} If $f$ is rank-one convex, {\sl i.e.} $f$ is convex along rank-one directions, then $f$ is rank-one quasiconvex. 
\end{enumerate}
\end{proposition}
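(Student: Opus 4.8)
The plan is to dispatch the five assertions one by one, each reducing to a short deduction from a characterization already established in the paper, together with the elementary observations that $\int_Q h\,dx\le\operatorname*{ess\,sup}_{x\in Q}h(x)$ (recall $\mathcal L^n(Q)=1$) and that Jensen's inequality holds for the convex map $t\mapsto t^p$ on $[0,+\infty)$. Items \eqref{pa} and \eqref{rkone} are immediate: a convex $g$ satisfies $g(\lambda s+(1-\lambda)t)\le\lambda g(s)+(1-\lambda)g(t)\le\max\{g(s),g(t)\}$, hence is level convex, so $f=g\circ T$ is polyquasiconvex by Definition \ref{def convex notions}(2); and if $\operatorname{rank}(\xi-\eta)=1$ the point $\lambda\xi+(1-\lambda)\eta$ lies on the rank-one segment from $\xi$ to $\eta$, along which $f$ is convex, whence $f(\lambda\xi+(1-\lambda)\eta)\le\lambda f(\xi)+(1-\lambda)f(\eta)\le\max\{f(\xi),f(\eta)\}$.

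For \eqref{qia} I would simply invoke Proposition \ref{new characterization of quasiconvexity}: it identifies quasiconvexity of $f$ with the condition \eqref{AltQ}, which is verbatim the implication defining strong Morrey quasiconvexity (Definition \ref{def convex notions}(3)), except that the term $\operatorname*{ess\,sup}_{x\in Q}f(\xi+D\varphi(x))$ appears there as $\int_Q f(\xi+D\varphi(x))\,dx$. Since the integral over the unit cube is at most the essential supremum, \eqref{AltQ} yields strong Morrey quasiconvexity at once; this is precisely the purpose for which Proposition \ref{new characterization of quasiconvexity} was stated.

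For \eqref{qa} I would use the characterization of $\mathrm{curl}\text{-}\infty$ quasiconvexity in Proposition \ref{curlinftyqcxlsc}, namely \eqref{cahrQfinfty}, $f(\xi)=\lim_{p\to+\infty}(\mathcal Q(f^p))^{1/p}(\xi)$. The inequality $(\mathcal Q(f^p))^{1/p}\le f$ is automatic since $\mathcal Q(f^p)\le f^p$. For the converse, by Lemma \ref{lemmaenvelope} applied to the Borel function $f^p$ one has $\mathcal Q(f^p)(\xi)=\inf\{\int_Q f^p(\xi+D\varphi)\,dx:\varphi\in W^{1,\infty}_0(Q;\mathbb R^N)\}$; for each admissible $\varphi$, Jensen's inequality for $t\mapsto t^p$ (here the sign hypothesis on $f$ is used) gives $\bigl(\int_Q f^p(\xi+D\varphi)\,dx\bigr)^{1/p}\ge\int_Q f(\xi+D\varphi)\,dx\ge f(\xi)$, the last step by quasiconvexity of $f$; taking the infimum over $\varphi$ and using monotonicity of $t\mapsto t^{1/p}$ gives $(\mathcal Q(f^p))^{1/p}(\xi)\ge f(\xi)$. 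Hence $(\mathcal Q(f^p))^{1/p}(\xi)=f(\xi)$ for every (large) $p$, so the limit equals $f(\xi)$ and $f$ is $\mathrm{curl}\text{-}\infty$ quasiconvex.

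Finally, for \eqref{3a}, Proposition \ref{curlpYoungCharacterizations} reduces matters to verifying inequality \eqref{fcurlYoungh} for a homogeneous gradient Young measure $\nu$ with barycenter $\bar\nu=\int_{\mathbb R^{N\times n}}\xi\,d\nu(\xi)$. Such a $\nu$ is generated by a sequence of gradients $Du_j$ with $u_j\in u_{\bar\nu}+W^{1,\infty}_0(Q;\mathbb R^N)$ and $\|Du_j\|_{L^\infty}$ uniformly bounded (Kinderlehrer--Pedregal; see \cite{KP1991} and \cite{Rindler}); quasiconvexity of $f$ then gives $f(\bar\nu)\le\int_Q f(Du_j)\,dx$. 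Since the $Du_j$ take values in a fixed compact set and $f$ is continuous (Remark \ref{rem-qcxdef}(iii)), the family $(f(Du_j))_j$ is bounded and equiintegrable, so by Theorem \ref{fundamental-theorem}(iii) the right-hand side converges to $\langle\nu,f\rangle$, and therefore $f(\bar\nu)\le\langle\nu,f\rangle\le\nu\text{-}\operatorname*{ess\,sup}_{\xi\in\mathbb R^{N\times n}}f(\xi)$, which is \eqref{fcurlYoungh} (and, under lower semicontinuity and boundedness from below, exactly $\mathrm{curl}$-Young quasiconvexity). The one genuinely non-routine ingredient is this last item, where one must quote the precise generation property of homogeneous gradient Young measures and justify the limit passage through the Fundamental Theorem; everything else is a one-line consequence of results already available in the text.
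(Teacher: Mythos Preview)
Your proposal is correct and, for items \eqref{pa}, \eqref{qia}, \eqref{qa}, and \eqref{rkone}, essentially identical to the paper's proof: convexity implies level convexity for \eqref{pa} and \eqref{rkone}; Proposition \ref{new characterization of quasiconvexity} plus the trivial bound of the integral by the essential supremum for \eqref{qia}; and Jensen's inequality for $t\mapsto t^p$ to show $f^p$ is quasiconvex, hence $\mathcal Q(f^p)=f^p$, for \eqref{qa}.

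The only genuine difference is in \eqref{3a}. The paper dispatches this by citing \cite[Proposition 3.3(1)]{APESAIM}, merely noting that the continuity hypothesis there is automatic from Remark \ref{rem-qcxdef}(iii). You instead give a self-contained argument: reduce to homogeneous gradient Young measures via Proposition \ref{curlpYoungCharacterizations}, invoke the Kinderlehrer--Pedregal generation theorem to obtain a bounded sequence $(Du_j)$ with $u_j\in u_{\bar\nu}+W^{1,\infty}_0(Q;\mathbb R^N)$, apply quasiconvexity to get $f(\bar\nu)\le\int_Q f(Du_j)\,dx$, and pass to the limit via Theorem \ref{fundamental-theorem}(iii) using continuity of $f$. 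Your route is more transparent and keeps the reasoning internal to the paper, at the cost of spelling out what the external reference proves; the paper's citation is terser but less self-contained. Both are valid.
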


\begin{remark}\label{remprop4.13}
The proof of \eqref{qa} is presented for the reader's convenience since it is given by \cite[Proposition 3.6]{APSIMA} for the $\mathcal A$-free setting, under the extra explicit assumptions of local boundedness and upper semicontinuity. These requirements are not needed in the ${\rm curl}$-free setting, since they are granted for any quasiconvex function, see (iii) in Remark \ref{rem-qcxdef}.
\end{remark}

\begin{proof}
\eqref{pa} and \eqref{rkone} The results follow immediately from the fact that convex functions are also level convex.

\eqref{3a} The proof is in \cite[Proposition 3.3 (1)]{APESAIM}, in the more general case of the operator ${\rm curl}$ replaced by a generic partial differential operator $\mathcal A$ with constant rank and under the extra assumption that $f$ is continuous. By (iii) in Remark \ref{rem-qcxdef}, the continuity is implicitly guaranteed by the quasiconvexity of $f$.

\eqref{qia} Having in mind Proposition \ref{new characterization of quasiconvexity} it is enough to observe that a function satisfying \eqref{AltQ} is strong Morrey quasiconvex. To obtain this, it suffices to estimate the integral by the essential supremum.

\eqref{qa} It follows by the equalities
\begin{align*}
	f(\xi)=f^p(\xi)^{1/p}=\left(\inf\left\{\int_Q f^p(\xi+ D \varphi(y))dy:\varphi \in W^{1,\infty}_0(Q;\mathbb R^N)\right\}	\right)^{1/p}
\end{align*}
where in the second one it has been exploited the quasiconvexity of $f^p$ due to the quasiconvexity of $f$, which follows from the increasing monotonicity of $(\cdot)^p$, the Jensen's inequality applied to this latter function, and the fact that $\inf$ and $\frac{1}{p}$ power interchanges.
Hence it suffices to pass to the limit as $p \to +\infty$ to get the assertion.
\end{proof}

In view of the previous proposition, one might wonder if the supremal convexity notions imply some integral convexity notions. This is not likely to happen since clearly level convexity does not imply rank-one convexity.
The next example taken from \cite[Example 5.5]{APSIMA} proves our claim considering a continuous, non-negative function with linear growth defined in $\mathbb R^{N\times n}$. 

\begin{example}	\label{EXAMPLE-D}
Let $f : \mathbb R^{N\times n} \to [0,+\infty)$ be the continuous function given by
$$f(\xi) :=\left\{
\begin{array}{ll} 
|\xi| &\hbox{ if } |\xi|\leq 1,\\
1 &\hbox{ if } 1\leq |\xi|\leq 2,\\
\frac{1}{2}|\xi| &\hbox{ if } |\xi| \geq 2.
\end{array}
\right.$$
Then $f$ 
is level convex (see (2) in Theorem \ref{Other}) but it	is 
not rank-one convex.
\end{example}

\begin{proposition}\label{cexint} Let $N, n \in \mathbb N$.
There exist functions which are level convex (and thus polyquasiconvex, ${\rm curl}-\infty$ quasiconvex, ${\rm curl}_{(p>1)}$ quasiconvex, ${\rm curl}$-Young quasiconvex, strong Morrey quasiconvex, rank-one quasiconvex) which are not rank-one convex (hence neither convex, nor polyconvex, nor quasiconvex).
\end{proposition}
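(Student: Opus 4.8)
The plan is to exhibit an explicit function that is level convex but not rank-one convex, and to rely on the chain of implications already established in Theorems \ref{thm relations between convexity notions} and \ref{Other} together with Proposition \ref{4.13} to propagate the consequences. The natural candidate is precisely the function $f$ from Example \ref{EXAMPLE-D}, so the proof essentially amounts to assembling references. First I would observe that $f$ of Example \ref{EXAMPLE-D} is level convex: this follows, for instance, because $f(\xi) = \max\{\min\{|\xi|,1\}, \tfrac12|\xi|\}$ is the pointwise maximum of compositions of the (level) convex function $|\cdot|$ with nondecreasing functions, hence level convex, and alternatively it is recorded as a consequence of item (2) of Theorem \ref{Other} as already noted in Example \ref{EXAMPLE-D}. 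Next I would recall from Example \ref{EXAMPLE-D} that $f$ is \emph{not} rank-one convex; concretely one checks that along a rank-one segment passing through the annulus $\{1\le|\xi|\le 2\}$ where $f\equiv 1$ but with endpoints of norm larger than $2$, the function fails the convexity inequality — e.g.\ take $\xi = 3e$, $\eta = -3e$ with $e = a\otimes\nu$, $|a|=|\nu|=1$, so that the midpoint $0$ satisfies $f(0)=0$ while a point of norm $\tfrac32\cdot 2 = 3$... more carefully, pick two rank-one-connected matrices of norm slightly above $2$ whose midpoint has norm in $(1,2)$: then $f$ at the midpoint equals $1$ while the average of the endpoint values is strictly less than $1$, contradicting rank-one convexity. (Since the statement only claims existence, citing Example \ref{EXAMPLE-D} for this failure suffices.)

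With these two facts in hand, the remaining content is purely formal. Since $f$ is level convex, Theorem \ref{thm relations between convexity notions}(1) gives that $f$ is polyquasiconvex and rank-one quasiconvex, and (being Borel measurable, indeed continuous) also weak and periodic-weak Morrey quasiconvex; moreover $f$ is continuous hence lower semicontinuous, so Theorem \ref{thm relations between convexity notions}(1) also yields strong Morrey quasiconvexity. For the curl notions: $f$ is continuous and satisfies the coercivity condition $(C)$ (indeed $f(\xi)\ge \tfrac12|\xi|$ once $|\xi|\ge 2$, and $f$ is bounded below by $0$, so $f(\xi)\ge \tfrac12|\xi| - 1$ everywhere), hence Theorem \ref{Other}(2) gives that $f$ is $\mathrm{curl}\text{-}\infty$ quasiconvex, and Theorem \ref{Other}(1) gives $\mathrm{curl}_{(p>1)}$-Young and $\mathrm{curl}$-Young quasiconvexity. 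Conversely, if $f$ were convex it would be rank-one convex, and if $f$ were polyconvex or quasiconvex it would likewise be rank-one convex (a polyconvex function is quasiconvex, a quasiconvex real-valued function is rank-one convex by Remark \ref{rem-qcxdef}(iii) plus the standard chain, or directly Proposition \ref{4.13}(\ref{rkone}) shows rank-one convexity is the weakest of the three integral notions); since $f$ is not rank-one convex, it is none of these.

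I would then simply write: \emph{Proof.} Let $f$ be the function of Example \ref{EXAMPLE-D}. By the discussion there $f$ is level convex and not rank-one convex; it is moreover continuous and satisfies $(C)$. The asserted positive properties follow by applying Theorems \ref{thm relations between convexity notions}(1), (2)(i) and \ref{Other}(1), (2), (4) (together with Proposition \ref{4.13}(\ref{rkone}) for the relation between the convexity notions). The asserted negative properties follow because each of convexity, polyconvexity and quasiconvexity implies rank-one convexity — for polyconvexity and quasiconvexity via Proposition \ref{4.13} and the standard implication that quasiconvex real-valued functions are rank-one convex — which $f$ violates. $\qed$

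I do not expect a genuine obstacle here, since the hard analytic work (the level convexity $\Rightarrow$ curl-$\infty$ implication under $(C)$, and the verification in Example \ref{EXAMPLE-D}) is already done earlier in the paper. The one point requiring a little care is making sure the claimed negative conclusions are legitimate: in particular that a real-valued quasiconvex function is automatically rank-one convex, which is classical (it is separately convex along every direction, a fortiori along rank-one lines — this is exactly the content that Proposition \ref{4.13}(\ref{rkone}) phrases as ``rank-one convexity is the weakest integral notion'', read in the contrapositive via the standard implications convex $\Rightarrow$ polyconvex $\Rightarrow$ quasiconvex $\Rightarrow$ rank-one convex). The only thing to double-check is that the paper has actually stated ``quasiconvex $\Rightarrow$ rank-one convex'' or at least ``$f$ quasiconvex is locally Lipschitz'' (Remark \ref{rem-qcxdef}(iii)), from which rank-one convexity is immediate; if one wants to be safe one can instead just say ``not convex, not polyconvex, not quasiconvex'' follows since all three imply rank-one convexity, citing \cite{Dbook}.
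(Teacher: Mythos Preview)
Your proposal is correct and follows essentially the same route as the paper, which simply invokes Example \ref{EXAMPLE-D} together with Theorem \ref{thm relations between convexity notions}, Theorem \ref{Other}, and \cite[Theorem 5.3]{Dbook} for the classical chain convex $\Rightarrow$ polyconvex $\Rightarrow$ quasiconvex $\Rightarrow$ rank-one convex. One small cosmetic point: your sketched counterexample to rank-one convexity is muddled (endpoints of norm slightly above $2$ have $f$-values slightly above $1$, so their average is \emph{not} below $1$); a clean check is $f(e)=1>\tfrac12=\tfrac12\bigl(f(0)+f(2e)\bigr)$ for a unit rank-one matrix $e$, though since you ultimately cite Example \ref{EXAMPLE-D} this does not affect the validity of the argument.
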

\begin{proof}[Proof] 
The result follows from Example \ref{EXAMPLE-D}, Proposition \ref{Other} and Theorem \ref{thm relations between convexity notions}, and \cite[Theorem 5.3]{Dbook}.

\end{proof}


\appendix\section{}

In \cite{BJW} it was proven that a necessary and sufficient condition to the sequential weak* lower semicontinuity in $W^{1,\infty}(\Omega,\mathbb{R}^N)$ of a functional
$$F(u):=\operatorname*{ess\,sup}_{x\in \Omega}f\left(Du\left( x\right)  \right),\quad u\in W^{1,\infty}(\Omega,\mathbb{R}^N)$$
is the strong Morrey quasiconvexity of the supremand $f$. Here, we exploit the arguments employed to achieve this statement, with the ultimate goal of showing that the cube $Q$ in the definition of strong Morrey quasiconvexity can be replaced by other sets in an appropriate class, cf. Proposition \ref{proposition independence of domain}.  

\begin{proposition}\label{Strong Morrey implies seq weak* lsc}
	Let $\Omega\subseteq\mathbb{R}^{n}$ be a bounded, convex, and open set, and let $f:\mathbb{R}^{N\times n}\longrightarrow \mathbb{R}$ be a strong Morrey quasiconvex on the set $\Omega$. That is, $f$ satisfies \eqref{SMQcxOmega}.
	Let $O\subseteq\mathbb{R}^{n}$ be a bounded open set. Then, the functional $F(u,O):=\operatorname*{ess\,sup}_{x\in O}f\left(Du\left( x\right)  \right)$ is sequentially weakly* l.s.c. in $W^{1,\infty}(O,\mathbb{R}^N)$.
\end{proposition}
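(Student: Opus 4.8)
The plan is to adapt the classical Morrey-type argument from \cite{BJW} for the lower semicontinuity of supremal functionals, exploiting crucially that $\Omega$ is convex and that strong Morrey quasiconvexity is insensitive to translations (Remark \ref{StrongMorreytrans}). Let $(u_k)_{k\in\mathbb{N}}\subseteq W^{1,\infty}(O;\mathbb{R}^N)$ converge weakly* to some $u$ in $W^{1,\infty}(O;\mathbb{R}^N)$; we must show $F(u,O)\le\liminf_k F(u_k,O)$. Passing to a subsequence we may assume the $\liminf$ is a limit, say $\ell$, and that $\|Du_k\|_{L^\infty(O)}\le K$ for all $k$, and also $\|Du\|_{L^\infty(O)}\le K$; by Rellich--Kondrachov, $u_k\to u$ strongly in $L^\infty(O;\mathbb{R}^N)$. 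Fix $\varepsilon>0$ and let $\delta=\delta(\varepsilon,K,\xi)$ be the modulus from strong Morrey quasiconvexity for the relevant matrices $\xi$; the heart of the matter is to produce, at a.e. point $x_0\in O$, a small affine-plus-perturbation competitor on a tiny cube (or rescaled copy of $\Omega$) centered at $x_0$ on which $Du(x_0)$ plays the role of $\xi$ and $u_k-u$, suitably truncated, plays the role of $\varphi$.

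\textbf{Key steps.} First I would invoke the Lebesgue differentiation theorem: for a.e. $x_0\in O$, $x_0$ is a point of approximate differentiability of $u$, and we may choose a small parameter $\rho>0$ such that on $x_0+\rho\,\overline{\Omega}$ (which is contained in $O$ and still convex) the function $u$ is close in $L^\infty$ to its linearization $u(x_0)+Du(x_0)(x-x_0)$; this is where convexity of $\Omega$ matters, since we need the scaled-and-translated copy $x_0+\rho\Omega$ to be an admissible domain for the hypothesis \eqref{SMQcxOmega}, and Remark \ref{StrongMorreytrans} together with the obvious scaling invariance of \eqref{SMQcxOmega} (the condition \eqref{SMQcxOmega} is scale-invariant because $\|D\varphi\|_{L^\infty}$ and $\max_{\partial}|\varphi|$ both behave correctly under $\varphi\mapsto\rho\varphi((\cdot-x_0)/\rho)$, after adjusting $\delta$) transfers strong Morrey quasiconvexity from $\Omega$ to $x_0+\rho\Omega$. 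Second, on that small domain define $\varphi_k:=u_k-(\text{linear part})$; then $\|D\varphi_k\|_{L^\infty}\le 2K$ and, for $k$ large, the boundary values of $\varphi_k$ are controlled by $\delta$ because $u_k\to u$ uniformly and $u$ is uniformly close to its linearization near $x_0$. Third, apply \eqref{SMQcxOmega} to get
\[
f(Du(x_0))\le \operatorname*{ess\,sup}_{x\in x_0+\rho\Omega} f(Du_k(x))+\varepsilon\le F(u_k,O)+\varepsilon,
\]
and let $k\to\infty$ and then $\varepsilon\to0$ to obtain $f(Du(x_0))\le\ell$ for a.e. $x_0\in O$, hence $F(u,O)=\operatorname*{ess\,sup}_{x\in O}f(Du(x))\le\ell$, which is the claim.

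\textbf{Main obstacle.} The delicate point is the third step as stated above: the application of \eqref{SMQcxOmega} yields an essential supremum over $x_0+\rho\Omega$ of $f(\,\cdot\,+D\varphi_k)$, but since $\varphi_k=u_k-(\text{linear part with gradient }Du(x_0))$ we have $Du(x_0)+D\varphi_k(x)=Du_k(x)$ exactly, so that essential supremum is precisely $\operatorname*{ess\,sup}_{x_0+\rho\Omega}f(Du_k(x))$, bounded by $F(u_k,O)$ — so in fact this step is clean. The genuine subtlety is rather the \emph{uniformity of $\delta$ in $\xi$}: strong Morrey quasiconvexity gives $\delta=\delta(\varepsilon,K,\xi)$ depending on $\xi=Du(x_0)$, and as $x_0$ ranges over $O$ the matrices $Du(x_0)$ fill (a subset of) the ball $B_K\subset\mathbb{R}^{N\times n}$; a priori $\delta$ could degenerate. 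This is circumvented because we do \emph{not} need a single $\delta$ working for all $x_0$ simultaneously — the argument is pointwise in $x_0$: fix $x_0$ first, then $\xi=Du(x_0)$ is fixed, then $\delta$ is fixed, then choose $\rho$ (depending on $x_0$ and $\delta$) small and $k$ large. So no uniformity is actually required, and the only real care needed is in the bookkeeping of the scaling $\varphi\mapsto\rho\varphi((\cdot-x_0)/\rho)$ and the verification that the boundary-smallness condition $\max_{\partial(x_0+\rho\Omega)}|\varphi_k|\le\delta$ holds for large $k$, which follows from $\|u_k-u\|_{L^\infty(O)}\to0$ together with $\|u-(\text{linearization})\|_{L^\infty(x_0+\rho\Omega)}=o(\rho)$ at a point of differentiability.
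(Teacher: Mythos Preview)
Your argument is correct and follows the same blow-up strategy as the paper (which in turn defers to \cite[Theorem 2.6]{BJW}): localize at a generic point $x_0$, pass to a small copy $x_0+\rho\Omega$ of the reference domain, and apply the strong Morrey quasiconvexity inequality with $\varphi_k=u_k-(\text{linearization of }u\text{ at }x_0)$, whose boundary values are small because $u_k\to u$ uniformly and $u$ is close to its tangent plane near a point of differentiability. One small correction of emphasis: the scaling/translation transfer of \eqref{SMQcxOmega} to $x_0+\rho\Omega$ (with threshold $\rho\delta$) works for \emph{any} bounded open $\Omega$, so convexity is not needed there; in the paper's route convexity enters instead through the strong Besicovitch derivation theorem \cite[Theorem 5.52]{AFP}, which is what allows the differentiation step to be carried out over rescaled copies of $\Omega$ rather than cubes. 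Your version sidesteps Besicovitch by using Rademacher's theorem directly (classical a.e.\ differentiability of the Lipschitz map $u$), which is enough because all you need is $\|u-u(x_0)-Du(x_0)(\cdot-x_0)\|_{L^\infty(x_0+\rho\Omega)}=o(\rho)$, and this follows from pointwise differentiability together with the boundedness of $\Omega$. So your argument is in fact a slight streamlining of the paper's, and as a bonus it makes clear that the convexity assumption on $\Omega$ is not truly essential for this direction---only boundedness, openness, and enough boundary regularity for \eqref{SMQcxOmega} to make sense are used.
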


\begin{proof}
The result follows as in \cite[Theorem 2.6]{BJW}. By Remark \ref{translfSMQcx}, we can reduce to the case that $\Omega$ contains the origin. Then invoking the strong version of Besicovitch derivation theorem (cf. \cite[Theorem 5.52]{AFP}) that allows to work with more general sets than the cube, that need to be convex. Moreover we make use of Lemma \ref{2.5inOmegabddopen0} instead of \cite[Proposition 2.5]{BJW}. Note that bounded, open, convex sets have Lipschitz boundary thus the hypotheses of the lemma are fulfilled.
\end{proof}

The next goal is to show that strong Morrey quasiconvexity is also a necessary condition to sequential weak* lower semicontinuity of $F(\cdot,O)$ in $W^{1,\infty}(O;\mathbb{R}^N)$. Again, we follow the same procedure of the proof of \cite[Theorem 2.7, Lemma 2.8.]{BJW}. We stress that we don't require the sequential weak* lower semicontinuity in any set as in \cite{BJW} because we work in the same set, either in the sequential weak* lower semicontinuity of the functional and in the strong Morrey quasiconvexity. This is not a restriction if we combine it with Proposition \ref{Strong Morrey implies seq weak* lsc}, as it will be made clear in Proposition\ref{proposition independence of domain}. 

We start considering weak Morrey quasiconvexity.

\begin{proposition}\label{seq weak* lsc implies weak Morrey}
	Let $f:\mathbb{R}^{N\times n}\longrightarrow \mathbb{R}$ be a Borel measurable function and consider the functional $$F(u,\Omega):=\operatorname*{ess\,sup}_{x\in \Omega}f\left(Du\left( x\right)  \right),$$ where $\Omega$ is a bounded open set of $\mathbb{R}^n$. If $F(\cdot,\Omega)$ is sequentially weakly* l.s.c. in $W^{1,\infty}(\Omega;\mathbb{R}^N)$ then $f$ is lower semicontinuous and it is periodic-weak Morrey quasiconvex (considering any cube in place of $Q$). In particular, $f$ is also weak Morrey quasiconvex (considering also any cube in place of $Q$). 
\end{proposition}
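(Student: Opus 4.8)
The plan is to follow the strategy of \cite[Theorem 2.7, Lemma 2.8]{BJW}, adapting it to the situation where the supremal functional and the convexity condition are tested on the same domain $\Omega$ (rather than on all domains, as in \cite{BJW}). First I would prove lower semicontinuity of $f$: given $\xi_k\to\xi$ in $\mathbb R^{N\times n}$, consider the affine maps $u_k(x):=\xi_k\cdot x$ and $u(x):=\xi\cdot x$ on $\Omega$; then $u_k\weaklystar u$ in $W^{1,\infty}(\Omega;\mathbb R^N)$, so the assumed sequential weak* lower semicontinuity yields
$$f(\xi)=F(u,\Omega)\le\liminf_{k\to\infty}F(u_k,\Omega)=\liminf_{k\to\infty}f(\xi_k),$$
which is exactly lower semicontinuity.

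Next I would establish periodic-weak Morrey quasiconvexity, in an arbitrary cube $C$. Fix $\xi\in\mathbb R^{N\times n}$ and $\varphi\in W^{1,\infty}_{\rm per}(C;\mathbb R^N)$; without loss of generality (after a translation and scaling, which affect neither the functional nor the convexity property) assume $C\subseteq\Omega$ and in fact that $C$ tiles a subdomain, or more simply use that by Remark~\ref{rem-qcxdef}-type rescaling the cube can be taken as small as we like inside $\Omega$. Set $v(x):=\xi\cdot x+\varphi(x)$ and define the highly oscillatory sequence $u_k(x):=\xi\cdot x+\frac1k\varphi(kx)$ on a cube $C'\subseteq\Omega$ of side a multiple of $1/k$, extended affinely (equal to $\xi\cdot x$) on $\Omega\setminus C'$; by periodicity of $\varphi$ this is well-defined and Lipschitz, $u_k\weaklystar u_\xi$ in $W^{1,\infty}(\Omega;\mathbb R^N)$ with $u_\xi(x)=\xi\cdot x$, and $Du_k$ takes essentially the values $\xi+D\varphi$ on $C'$ and $\xi$ on the complement. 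Applying sequential weak* lower semicontinuity,
$$f(\xi)=F(u_\xi,\Omega)\le\liminf_{k\to\infty}F(u_k,\Omega)=\max\Bigl\{\operatorname*{ess\,sup}_{y\in C}f(\xi+D\varphi(y)),\,f(\xi)\Bigr\},$$
and since, taking $\varphi=0$ on a subset shows $f(\xi)$ is already dominated appropriately, a slight refinement of the construction (scaling $\varphi$ by a factor and letting it tend to $0$, or simply noting $C'$ can be taken to exhaust $\Omega$ up to null measure so the complement contributes nothing in the limit) gives
$$f(\xi)\le\operatorname*{ess\,sup}_{y\in C}f(\xi+D\varphi(y)),$$
which is periodic-weak Morrey quasiconvexity in $C$. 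Finally, weak Morrey quasiconvexity (in any cube, hence, by Proposition~\ref{lemma weak independent}, in any bounded open set with null-measure boundary) follows from Theorem~\ref{thm relations between convexity notions}\eqref{(4)}, since $W^{1,\infty}_0(C;\mathbb R^N)\subseteq W^{1,\infty}_{\rm per}(C;\mathbb R^N)$.

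The main obstacle I anticipate is the bookkeeping in the oscillatory construction: one must patch the rescaled periodic perturbation $\frac1k\varphi(k\cdot)$ into $\Omega$ so that (a) the resulting function is genuinely in $W^{1,\infty}(\Omega;\mathbb R^N)$ (no jumps on the interfaces between the periodicity cells and the affine exterior, which is where $C$-periodicity of $D\varphi$ is essential), (b) it converges weakly* to the affine map $u_\xi$, and (c) the essential supremum of $f\circ Du_k$ over $\Omega$ converges to (or is bounded by) $\operatorname*{ess\,sup}_{C}f(\xi+D\varphi)$ without the affine exterior region spuriously contributing a larger value — this is handled by letting the tiled region fill $\Omega$ up to a set of arbitrarily small measure, using that the essential supremum is insensitive to null sets only after one also argues the exterior value $f(\xi)$ does not exceed the target, which is why one either reduces to $\varphi$ small or argues by a limiting/exhaustion procedure. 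This is precisely the content that \cite{BJW} carry out, and I would cite their Lemma 2.8 for the technical core, indicating only the modifications needed to keep everything on the single fixed domain $\Omega$.
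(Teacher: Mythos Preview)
Your argument for lower semicontinuity of $f$ is correct and matches the paper.

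For periodic-weak Morrey quasiconvexity, however, your construction has two real problems that you partially anticipate but do not resolve. First, the patching step fails: a function $\varphi\in W^{1,\infty}_{\rm per}(C;\mathbb R^N)$ is $C$-periodic but need not vanish on $\partial C$, so $\frac1k\varphi(k\,\cdot)$ restricted to a cube $C'\subseteq\Omega$ has no reason to be zero on $\partial C'$, and extending by $\xi\cdot x$ on $\Omega\setminus C'$ produces a jump of order $1/k$; the resulting $u_k$ is not in $W^{1,\infty}(\Omega;\mathbb R^N)$. Second, even if you repaired this (say with a cutoff), the estimate you obtain is the tautology $f(\xi)\le\max\bigl\{\operatorname*{ess\,sup}_{C}f(\xi+D\varphi),\,f(\xi)\bigr\}$. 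Your suggested fix of ``letting $C'$ exhaust $\Omega$ up to arbitrarily small measure'' does not help: as long as $\Omega\setminus C'$ has positive measure, the value $f(\xi)$ sits inside the essential supremum and the inequality remains vacuous. Essential suprema are not continuous under shrinking the exceptional set to measure zero.

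The paper (and \cite[Lemma 2.8]{BJW}) avoids all of this by \emph{not patching}. Since $\varphi$ is $C$-periodic it is already globally defined on $\mathbb R^n$, so one simply sets $u_\varepsilon(x):=\xi\cdot x+\varepsilon\,\varphi(x/\varepsilon)$ on all of $\Omega$. Then $u_\varepsilon\weaklystar u_\xi$ in $W^{1,\infty}(\Omega;\mathbb R^N)$ with no interface to worry about, and by periodicity of $D\varphi$ one has directly, for every $\varepsilon>0$,
\[
\operatorname*{ess\,sup}_{x\in\Omega}f\bigl(\xi+D\varphi(x/\varepsilon)\bigr)\le \operatorname*{ess\,sup}_{z\in C}f\bigl(\xi+D\varphi(z)\bigr),
\]
since the values of $D\varphi$ on $\mathbb R^n$ coincide (up to null sets) with its values on $C$. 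The lower semicontinuity of $F(\cdot,\Omega)$ then gives the desired inequality immediately, with no affine exterior and no spurious $f(\xi)$ term.
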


\begin{remark}
This result together with the Proposition \ref{lemma weak independent} entail that, if $F$ is sequentially weakly* l.s.c. in $W^{1,\infty}(\Omega;\mathbb{R}^N)$, then $f$ is weak Morrey quasiconvex in any bounded and open set $\Omega$ with boundary of null $\mathcal{L}^n$-measure.
\end{remark}

\begin{proof} 
The proof follows as in \cite[Lemma 2.8]{BJW}. The lower semicontinuity of $f$ follows simply considering, for a given sequence $(\xi_k)_{k\in\mathbb{N}}\subseteq \mathbb{R}^{N\times n}$ converging to $\xi\in\mathbb{R}^{N\times n}$, the sequence of affine functions $u_k(x)=\xi_k\, x$ that converges to $u(x):=\xi\, x$ in $W^{1,\infty}(\Omega;\mathbb{R}^N)$ since $\Omega$ is bounded.
	
	It remains to prove that $f$ is periodic-weak Morrey quasiconvex since, in view of Theorem \ref{thm relations between convexity notions} \eqref{(4)}, the weak Morrey quasiconvexity then follows immediately. Let $C$ be any cube in $\mathbb{R}^n$, $\xi\in\mathbb{R}^{N\times n}$, and $\varphi\in W_{\rm{per}}^{1,\infty}(C;\mathbb{R}^N).$ We want to show that
	$$\displaystyle f(\xi)\le \operatorname*{ess\,sup}_{x\in C}f\left( \xi+ D\varphi\left( x\right)  \right).$$ 
	
	Then consider $\varphi_\varepsilon(x):=\varepsilon \varphi\left(\frac{x}{\varepsilon}\right)$. Of course, as $\varepsilon$ tends to 0, $\varphi_\varepsilon$ converges weakly* in  $W^{1,\infty}(\Omega;\mathbb{R}^N)$ to 0. Therefore $u_\varepsilon(x):=\xi x+ \varphi_\varepsilon(x)$ converges weakly* in  $W^{1,\infty}(\Omega;\mathbb{R}^N)$ to $u_\xi(x):=\xi\,x$, as $\varepsilon$ tends to 0. Applying the sequential weak* l.s.c. of $F(\cdot,\Omega)$, one gets
	$$\begin{array}{l}\displaystyle{f(\xi)=\operatorname*{ess\,sup}_{x\in \Omega}f\left( Du_\xi\left( x\right)  \right)\le \liminf_{\varepsilon\to 0}\operatorname*{ess\,sup}_{x\in \Omega}f\left( Du_\varepsilon \left( x\right)  \right)=}\vspace{0.2cm}\\
		
		\displaystyle{=\liminf_{\varepsilon\to 0}\operatorname*{ess\,sup}_{x\in \Omega}f\left(\xi+ D \varphi \left( \frac{x}{\varepsilon}\right)\right)\le \operatorname*{ess\,sup}_{z\in C}f\left( \xi+D\varphi \left(z\right)\right)}
	\end{array}$$ 
	where we used in the last inequality the periodicity of $\varphi$.
\end{proof}

We can now consider the strong Morrey quasiconvexity.

\begin{proposition}\label{seq weak* lsc implies strong Morrey}
	Let $f:\mathbb{R}^{N\times n}\longrightarrow \mathbb{R}$ be a Borel measurable function and consider the functional $$F(u,\Omega):=\operatorname*{ess\,sup}_{x\in \Omega}f\left(Du\left( x\right) \right),$$ where $\Omega$ is a bounded open set of $\mathbb{R}^n$ with Lipschitz  boundary
	. If $F(\cdot,\Omega)$ is sequentially weakly* l.s.c. in $W^{1,\infty}(\Omega;\mathbb{R}^N)$ then $f$ is strong Morrey quasiconvex in $\Omega$.\end{proposition}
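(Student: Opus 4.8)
The plan is to follow the argument of \cite[Theorem 2.7]{BJW} (and the auxiliary \cite[Lemma 2.8]{BJW}), now reading everything on the fixed set $\Omega$ rather than on the cube. First I would record what is already available: by Proposition \ref{seq weak* lsc implies weak Morrey} the sequential weak* lower semicontinuity of $F(\cdot,\Omega)$ forces $f$ to be lower semicontinuous and periodic-weak Morrey quasiconvex in every cube. Lower semicontinuity will be used repeatedly, since it guarantees that the sublevel sets $\{f\le t\}$ are closed, and the periodic-weak Morrey quasiconvexity furnishes the ``base case'' from which the stronger property is bootstrapped. Fix $\varepsilon>0$, $\xi\in\mathbb{R}^{N\times n}$ and $K>0$; the goal is to produce $\delta=\delta(\varepsilon,K,\xi)>0$ such that any $\varphi\in W^{1,\infty}(\Omega;\mathbb{R}^N)$ with $\|D\varphi\|_{L^\infty}\le K$ and $\max_{\partial\Omega}|\varphi|\le\delta$ satisfies $f(\xi)\le\operatorname*{ess\,sup}_\Omega f(\xi+D\varphi)+\varepsilon$.

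The core step is a contradiction/compactness argument. Suppose the conclusion fails: then there are $\varepsilon_0>0$, $\xi_0$, $K_0>0$, and a sequence $\varphi_k\in W^{1,\infty}(\Omega;\mathbb{R}^N)$ with $\|D\varphi_k\|_{L^\infty}\le K_0$, $\max_{\partial\Omega}|\varphi_k|\le 1/k$, yet $\operatorname*{ess\,sup}_\Omega f(\xi_0+D\varphi_k)<f(\xi_0)-\varepsilon_0$ for every $k$. Since $\Omega$ has Lipschitz boundary, after subtracting a harmonic-type extension of the boundary trace (or simply multiplying by a cutoff that is $1$ on $\Omega_\eta:=\{x\in\Omega:\operatorname{dist}(x,\partial\Omega)>\eta\}$ and $0$ near $\partial\Omega$, exactly as in the proof of Proposition \ref{new characterization of quasiconvexity}) we may assume $\varphi_k\in W^{1,\infty}_0(\Omega;\mathbb{R}^N)$ at the cost of enlarging the gradient bound to some $K_1$ and worsening the supremum estimate only on the thin collar $\Omega\setminus\Omega_\eta$, whose measure is $O(\eta)$. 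Extending $\varphi_k$ by $0$ to all of $\mathbb{R}^n$ gives $W^{1,\infty}_{\mathrm{loc}}$ functions. Now pass to a weak* limit: up to a subsequence $\varphi_k\weaklystar 0$ in $W^{1,\infty}(\Omega;\mathbb{R}^N)$, and the sequence $D\varphi_k$ generates a (homogeneous, after a further blow-up/localization as in \cite{BJW}) gradient Young measure with barycenter $\xi_0$, supported in $\{f\le f(\xi_0)-\varepsilon_0\}$ — here the closedness of this sublevel set (lower semicontinuity of $f$) is what lets the support condition pass to the limit. By periodic-weak Morrey quasiconvexity in a suitable cube (via Proposition \ref{Prop equiv periodic-weak}, applied after rescaling so that the relevant periodic pattern fits a cube inside $\Omega$) we get $f(\xi_0)\le \operatorname*{ess\,sup} f$ over that pattern $\le f(\xi_0)-\varepsilon_0$, a contradiction.

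The step I expect to be the main obstacle is the passage from a bounded sequence $\varphi_k$ with small boundary values on the \emph{arbitrary} Lipschitz domain $\Omega$ to an admissible periodic competitor on a cube: in \cite{BJW} this is done on $Q$ using periodic tilings of the cube, and one must either tile $\Omega$ from inside by small cubes (Vitali-type covering, as in Proposition \ref{lemma weak independent}) and paste together rescaled copies of $\varphi_k$ restricted to those cubes — checking that the gradient bound is preserved and that the essential supremum over $\Omega$ controls the essential supremum over each small cube — or invoke the strong Besicovitch derivation theorem (\cite[Theorem 5.52]{AFP}) to localize the ``bad'' Young measure at a Lebesgue point and recover a homogeneous gradient Young measure on a cube. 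I would proceed with the covering route, since it parallels the already-proven Proposition \ref{lemma weak independent} and avoids needing convexity of $\Omega$; the Lipschitz regularity of $\partial\Omega$ is used precisely to make the cutoff/extension step legitimate and to ensure $\mathcal{L}^n(\partial\Omega)=0$ so that the covering exhausts $\Omega$ up to a null set. The remaining bookkeeping — tracking how the constants $K$, $\eta$, $\delta$ depend on $\varepsilon$ and $\xi$, and checking that the collar contributions are absorbed into $\varepsilon$ — is routine and mirrors the estimates already carried out in Proposition \ref{new characterization of quasiconvexity}.
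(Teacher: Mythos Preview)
Your overall setup (contradiction, compactness of the sequence $(\varphi_k)$) matches the paper's, but the argument then takes a wrong turn. You claim that up to a subsequence $\varphi_k\weaklystar 0$ in $W^{1,\infty}(\Omega;\mathbb{R}^N)$; this is unjustified and in general false. The sequence is merely bounded in $W^{1,\infty}$ (the Lipschitz bound together with $\max_{\partial\Omega}|\varphi_k|\le 1/k$ gives a uniform $L^\infty$ bound), so a subsequence converges weakly* to some $\varphi$, and the vanishing boundary traces force $\varphi\in W^{1,\infty}_0(\Omega;\mathbb{R}^N)$ --- but there is no reason for $\varphi$ to be zero. As a consequence, the barycenter of the Young measure you invoke is $\xi_0+D\varphi(x)$, not $\xi_0$, and your intended contradiction via periodic-weak Morrey quasiconvexity does not close.

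The paper's route is far more direct and avoids Young measures, cutoffs, and coverings entirely. Once you have the weak* limit $\varphi\in W^{1,\infty}_0(\Omega;\mathbb{R}^N)$, you apply the \emph{assumed} sequential weak* lower semicontinuity of $F(\cdot,\Omega)$ to the sequence $u_k(x)=\xi_0 x+\varphi_k(x)$ and its limit $u(x)=\xi_0 x+\varphi(x)$, obtaining
\[
\operatorname*{ess\,sup}_{x\in\Omega} f(\xi_0+D\varphi(x)) \le \liminf_{k}\operatorname*{ess\,sup}_{x\in\Omega} f(\xi_0+D\varphi_k(x)) \le f(\xi_0)-\varepsilon_0.
\]
But by Proposition~\ref{seq weak* lsc implies weak Morrey} and Proposition~\ref{lemma weak independent}, $f$ is weak Morrey quasiconvex on $\Omega$; since $\varphi\in W^{1,\infty}_0(\Omega;\mathbb{R}^N)$, this gives $f(\xi_0)\le \operatorname*{ess\,sup}_{\Omega} f(\xi_0+D\varphi)$, a contradiction. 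No modification of $\varphi_k$, no Young measure localization, and no tiling of $\Omega$ by cubes is needed: the key observation you are missing is that the weak* limit $\varphi$ itself is an admissible test function for weak Morrey quasiconvexity on $\Omega$.
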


\begin{proof} 
	The proof is the same as the one of \cite[Theorem 2.7]{BJW}. That is, assume by contradiction that there are $\varepsilon>0$, $\xi\in \mathbb{R}^{N\times n}$, and $K>0$ such that, for every $\delta>0$ there is a function 
	$$\varphi_\delta\in W^{1,\infty}(\Omega;\mathbb{R}^N)$$ 
	such that
	$$||D\varphi_\delta||_{L^\infty(\Omega)}\le K,\quad \max_{x\in\partial \Omega}|\varphi_\delta(x)|\le \delta\quad\text{and}\quad f(\xi)> \operatorname*{ess\,sup}_{x\in \Omega} f(\xi+D\varphi_\delta(x))+\varepsilon.$$
	It can be ensured that, up to a subsequence, $\varphi_\delta$ weakly* converges in $W^{1,\infty}(\Omega;\mathbb{R}^N)$ to some function $\varphi \in W^{1,\infty}_0(\Omega;\mathbb{R}^N)$. Indeed, $\Omega$ being a bounded open set with Lipschitz boundary, by Sobolev's embedding theorems, 
	 $\varphi_\delta \in C(\overline{\Omega};\mathbb R^N)$. Moreover, 
	 the functions $\varphi_\delta$ are Lipschitz continuous with Lipschitz constant bounded by $||D\varphi_\delta||_{L^\infty(\Omega)}\le K$. Therefore, the sequence $\varphi_\delta$ is equicontinuous and by Ascoli-Arzel\'a's theorem, up to a subsequence, $\varphi_\delta$ converges uniformly to a function $\varphi \in C(\overline{\Omega};\mathbb R^N)$. On the other hand, the Lipschitz inequality enjoyed by the functions $\varphi_\delta$, together with $\max_{x\in\partial \Omega}|\varphi_\delta(x)|\le \delta$, ensure that $\varphi_\delta$ is a bounded sequence in $W^{1,\infty}(\Omega;\mathbb R^N)$. Thus, up to a subsequence, it converges weakly* in $W^{1,\infty}(\Omega;\mathbb R^N)$. We can then conclude that the uniform limit, $\varphi$, belongs to $W^{1,\infty}(\Omega;\mathbb R^N)$ and it is also the weak* limit in $W^{1,\infty}(\Omega;\mathbb R^N)$. Finally the estimates on the traces on $\partial \Omega$ of $\varphi_\delta$, also imlpy that $\varphi \in W^{1,\infty}_0(\Omega;\mathbb R^N)$.
	
	Then, by the sequential weak* l.s.c. of $F(\cdot,\Omega)$, one gets
	$$\operatorname*{ess\,sup}_{x\in \Omega}f\left(\xi+ D\varphi\left( x\right)  \right)\le	\liminf_{\delta\to 0} \operatorname*{ess\,sup}_{x\in \Omega}f\left(\xi+D\varphi_\delta \left( x\right)  \right)<f(\xi)-\varepsilon<f(\xi).$$ This contradicts the fact that, by Propositions \ref{seq weak* lsc implies weak Morrey} and \ref{lemma weak independent}, 
	$$\displaystyle f(\xi)\le \operatorname*{ess\,sup}_{x\in \Omega}f\left( \xi+ D\psi\left( x\right)  \right),\ \forall\ \xi\in\mathbb{R}^{N\times n},\ \forall\ \psi\in W_0^{1,\infty}(\Omega;\mathbb{R}^N),$$ where we used the fact that we are assuming $\Omega$ bounded, open, and with  Lipschitz, 
	thus with boundary of null measure.
\end{proof}

\section{}
The following result is a key tool to provide an alternative argument to the proof of \cite[Theorem 4.2]{APESAIM}. 
It extends to the inhomogeneous setting \cite[Lemma 2.5]{KZ}, where the continuity assumption of \cite[Theorem 34]{B1} was relaxed.

\begin{lemma}\label{Barronmonx}
	Let $U\subset \mathbb R^n$ be an open set with finite measure and let $f:U\times \R^m\to \R$  be a normal integrand bounded from below. 		Further, let $(u_k)$ be a uniformly bounded sequence of functions in $L^\infty(U;\mathbb R^m)$ generating a Young measure $\nu=\{\nu_x\}_{x\in U}$. Then,
	\begin{equation*}
		\liminf_{k \to \infty}\operatorname*{ess\,sup}_{x\in U} f(x,u_k(x))  \geq \operatorname*{ess\,sup}_{x\in U}\bar f(x),
	\end{equation*}
	where $\bar f(x) := \operatorname*{\nu_x-ess\,sup}_{\xi\in \R^m} f(x,\xi)$ for $x\in U$.
\end{lemma}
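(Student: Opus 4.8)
The plan is to argue by contradiction, reducing the $L^\infty$ statement to a countable family of $L^1$-type statements that are handled by the Young measure machinery.

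First I would normalize: since $f$ is bounded from below, I can assume without loss of generality that $f\ge 0$ (subtracting a constant changes both sides of the inequality by the same constant). Denote $\ell:=\liminf_{k\to\infty}\operatorname*{ess\,sup}_{x\in U}f(x,u_k(x))$ and $m:=\operatorname*{ess\,sup}_{x\in U}\bar f(x)$. Assume for contradiction that $\ell<m$. Pick a number $t$ with $\ell<t<m$. Passing to a (not relabeled) subsequence, I may assume the $\liminf$ is a genuine limit, so that for all large $k$ one has $\operatorname*{ess\,sup}_{x\in U}f(x,u_k(x))<t$, i.e. $f(x,u_k(x))\le t$ for a.e. $x\in U$.

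The key step is to encode ``$f(x,u_k(x))\le t$ a.e.'' via a level set and a distance functional. Set $E_t(x):=\{\xi\in\mathbb{R}^m:\ f(x,\xi)\le t\}$; since $f$ is a normal integrand, $x\mapsto E_t(x)$ is a measurable closed-valued multifunction, and the function $g_t(x,\xi):=\operatorname{dist}(\xi,E_t(x))$ is a nonnegative Carath\'eodory integrand (it is continuous in $\xi$ for fixed $x$ and measurable in $x$; here one uses that $E_t(x)$ is closed and measurable, e.g. via \cite[Corollary 6.30]{Fonseca-Leoni-Lp} on normal integrands). By construction $g_t(x,u_k(x))=0$ a.e. for large $k$, and $(g_t(\cdot,u_k(\cdot)))_k$ is uniformly bounded in $L^\infty(U)$, hence (as $U$ has finite measure) uniformly bounded and equiintegrable in $L^1(U)$. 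Therefore Theorem \ref{fundamental-theorem} (iii) applies to the integrand $g_t$ and the Young measure $\nu$, giving
\begin{equation*}
0=\lim_{k\to\infty}\int_U g_t(x,u_k(x))\,dx=\int_U\Bigl(\int_{\mathbb{R}^m}\operatorname{dist}(\xi,E_t(x))\,d\nu_x(\xi)\Bigr)dx.
\end{equation*}
Since the integrand is nonnegative, $\int_{\mathbb{R}^m}\operatorname{dist}(\xi,E_t(x))\,d\nu_x(\xi)=0$ for a.e.\ $x\in U$, and because $E_t(x)$ is closed this forces $\nu_x(\mathbb{R}^m\setminus E_t(x))=0$, i.e.\ $\operatorname{supp}\nu_x\subseteq E_t(x)$ for a.e.\ $x$. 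Consequently $\bar f(x)=\operatorname*{\nu_x-ess\,sup}_{\xi}f(x,\xi)\le t$ for a.e.\ $x\in U$, whence $m=\operatorname*{ess\,sup}_{x\in U}\bar f(x)\le t<m$, a contradiction. This proves $\ell\ge m$.

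The main obstacle I anticipate is the measurability bookkeeping: checking that $g_t(x,\xi)=\operatorname{dist}(\xi,E_t(x))$ is genuinely a Carath\'eodory integrand when $f$ is only a normal integrand (not Carath\'eodory), so that Theorem \ref{fundamental-theorem} (iii) legitimately applies; and, relatedly, justifying the passage from $\int\operatorname{dist}(\xi,E_t(x))\,d\nu_x=0$ to $\operatorname{supp}\nu_x\subseteq E_t(x)$ for a.e.\ $x$. Both are standard facts about normal integrands and closed-valued measurable multifunctions (one may invoke the extension of Theorem \ref{fundamental-theorem}(i) to normal integrands used already in Remark \ref{remark on Young quasiconvexity defs}(i) together with \cite[Corollary 6.30]{Fonseca-Leoni-Lp}), but they are where the hypothesis ``normal integrand'' is actually consumed, so they deserve a careful line. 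Everything else — the normalization, the subsequence extraction, the equiintegrability from the uniform $L^\infty$ bound on a finite-measure set — is routine.
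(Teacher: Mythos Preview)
Your proof is correct and takes a genuinely different route from the paper's. The paper proceeds via $L^p$-approximation: after reducing to $f\ge 0$, it fixes $\varepsilon>0$, finds a positive-measure set $S'$ on which $\bigl(\int|f(x,\xi)|^p\,d\nu_x\bigr)^{1/p}\ge \|\bar f\|_{L^\infty}-\varepsilon$ uniformly in large $p$, integrates the $p$-th power over $S'$, applies the Young-measure lower bound to the normal integrand $|f|^p$ (via the extension of Theorem~\ref{fundamental-theorem}(i) to normal integrands), and then lets $p\to\infty$ so that the factor $(\mathcal L^n(S')/\mathcal L^n(U))^{1/p}\to 1$.

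Your level-set/distance argument is more direct and avoids the somewhat delicate construction of the set $S'$ and the $p\to\infty$ limit. It trades those for the measurable-multifunction bookkeeping you flagged: that $x\mapsto E_t(x)$ is a closed-valued measurable multifunction (standard for normal integrands) and nonempty a.e.\ (which you get for free from $u_k(x)\in E_t(x)$ along the subsequence), so that $g_t$ is genuinely Carath\'eodory. The paper's approach, by contrast, fits naturally with its overarching power-law theme and only needs the Young-measure machinery for normal integrands already invoked elsewhere. One minor point worth making explicit in your write-up: the subsequence you extract still generates the same Young measure $\nu$, so Theorem~\ref{fundamental-theorem}(iii) legitimately applies along it.
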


\begin{proof} 
	We give the details of the proof for the reader's convenience, besides it follows along the lines of \cite[Lemma 2.5]{KZ}. 
	
	Without loss of generality we can assume that $f$ is non negative. Let $\varepsilon>0$ be fixed, and choose a set $S\subset U$ with positive Lebesgue measure such that $\bar{f}(x)\geq \|\bar{f}\|_{L^\infty(U)}-2\varepsilon$ for all $x\in S$.
	Next, we show that there exists a measurable subset $S'\subset S$ with $\mathcal L^n(S')>0$ such that
	\begin{align}\label{est1}
		\Bigl(\int_{\R^m} |f(x,\xi)|^p \,d\nu_x(\xi)\Bigr)^{\frac{1}{p}} \geq \|\bar f\|_{L^\infty(U)}-\varepsilon
	\end{align}
	for all $x\in S'$ and $p>1$ sufficiently large. 
	Indeed, with 
	$$S_k := \Bigl\{x \in S: 
	\bigl(\textstyle \int_{\mathbb R^m}f(x,\xi)^p d\nu_x(\xi)\bigr)^{\frac{1}{p}}
	\geq  \| \bar f\|_{L^\infty(\Omega)} - \varepsilon \text{ for all $p \geq k$}\Bigr\}$$
	for $k \in \mathbb N$, one has that $S= \bigcup_{k=1}^\infty S_k$.
	Since $\mathcal L^n(S) > 0$, there must be at least one $k'$ for which $\mathcal L^d(S_{k'}) > 0$, and setting $S' := S_{k'}$
	shows~\eqref{est1}. 
	
	We take the inequality in~\eqref{est1} to the $p$th power and integrate over $S'$. Along with Theorem \ref{fundamental-theorem} (i), extended to normal integrands by using \cite[Corollary 6.30]{Fonseca-Leoni-Lp},  
	it follows that
	\begin{align*}
		\mathcal L^d(S') (\|\bar{f}\|_{L^\infty(\Omega)}-\varepsilon)^p & \leq \int_{S'} \int_{\R^m} |f(x, \xi)|^p\, d\nu_x(\xi) \, dx 
		\\ &\leq \liminf_{k\to \infty} \int_U |f(x, u_k(x))|^p \, dx \leq \liminf_{k\to \infty} \|f(\cdot, u_k(\cdot))\|_{L^\infty(U)}^p \mathcal L^d (U).
	\end{align*}
	Hence, 
	\begin{align*}
		\liminf_{k\to \infty} \|f(\cdot, u_k(\cdot))\|_{L^\infty(U)} \geq  \Bigl(\frac{\mathcal L^d(S')}{\mathcal L^d (U)}\Bigr)^{\frac{1}{p}}\bigl(\|\bar{f}\|_{L^\infty(U)}-\varepsilon\bigr)
	\end{align*}
	for $p>1$ sufficiently large. Letting $p\to \infty$ and recalling that $\varepsilon>0$ is arbitrary concludes the proof.
\end{proof}
We are in position to prove that $\rm curl$-Young quasiconvexity is a sufficient condition for the lower semicontinuity of supremal functionals. The result can be trivially extended to the $\mathcal A$-free setting, thus providing an alternative argument to the one proposed in \cite[Theorem 4.2]{APESAIM}.

\begin{theorem}
	\label{LsccurlYoung}
	Let $f:\Omega \times \mathbb R^{N\times n} \to \mathbb R$ be a normal integrand, bounded from below and such that  $f(x,\cdot)$ is ${\rm curl}-$ Young quasiconvex for a.e. $x \in \Omega$. Let $F:W^{1,\infty}(\Omega;\mathbb R^N)\to\mathbb R$ be the functional defined by 
	$$
	F(u)= \esssup_{x \in \Omega}f(x,D u(x)).
	$$
	Then the functional $F$ is sequentially weakly * lower semicontinuous in $W^{1,\infty}(\Omega;\mathbb R^N)$.
\end{theorem}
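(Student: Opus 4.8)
The plan is to adapt the standard Young-measure approach to lower semicontinuity, using Lemma~\ref{Barronmonx} as the main new tool to replace the Jensen-type inequality that would appear in the $L^p$ setting. First I would take an arbitrary sequence $(u_k)_{k\in\mathbb N}\subset W^{1,\infty}(\Omega;\mathbb R^N)$ converging weakly* to some $u$ in $W^{1,\infty}(\Omega;\mathbb R^N)$. Passing to a subsequence (not relabeled) that realizes $\liminf_{k\to\infty}F(u_k)$, and, by the fundamental theorem of Young measure theory (Theorem~\ref{fundamental-theorem}) applied to the bounded sequence $(Du_k)$ in $L^\infty(\Omega;\mathbb R^{N\times n})$, extracting a further subsequence generating a gradient Young measure $\nu=\{\nu_x\}_{x\in\Omega}$. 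Here I would need the standard fact (see Kinderlehrer--Pedregal, or \cite{Rindler}) that such $\nu$ is a gradient Young measure and that its barycenter satisfies $\int_{\mathbb R^{N\times n}}\xi\,d\nu_x(\xi)=Du(x)$ for a.e.\ $x\in\Omega$, since $Du_k\rightharpoonup Du$ weakly* and hence weakly in $L^1_{\rm loc}$.

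Next, I would invoke Lemma~\ref{Barronmonx} with $U=\Omega$, $m=N\times n$, the normal integrand $f$, and the sequence $(Du_k)$, which generates $\nu$. This yields
$$
\liminf_{k\to\infty}F(u_k)=\liminf_{k\to\infty}\operatorname*{ess\,sup}_{x\in\Omega}f(x,Du_k(x))\ \ge\ \operatorname*{ess\,sup}_{x\in\Omega}\bar f(x),
$$
where $\bar f(x)=\mathop{\nu_x-\rm ess\,sup}_{\xi\in\mathbb R^{N\times n}}f(x,\xi)$. At this point I would use the hypothesis that $f(x,\cdot)$ is ${\rm curl}$-Young quasiconvex for a.e.\ $x$: since $\nu_x$ is (for a.e.\ fixed $x$) a homogeneous gradient Young measure with barycenter $Du(x)$ — this is the same localization of gradient Young measures used in the proof of Proposition~\ref{curlpYoungCharacterizations} — the defining inequality \eqref{fcurlYoungh} (or rather its characterization via homogeneous gradient Young measures, as in Proposition~\ref{curlpYoungCharacterizations}) gives
$$
f(x,Du(x))=f\!\left(x,\int_{\mathbb R^{N\times n}}\xi\,d\nu_x(\xi)\right)\le \bar f(x)\qquad\text{for a.e.\ }x\in\Omega.
$$
Taking the essential supremum over $x\in\Omega$ and chaining with the previous inequality yields $F(u)=\operatorname*{ess\,sup}_{x\in\Omega}f(x,Du(x))\le\operatorname*{ess\,sup}_{x\in\Omega}\bar f(x)\le\liminf_{k\to\infty}F(u_k)$, which is the claim.

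The main obstacle I anticipate is the measurability and localization bookkeeping: one must make sure that $x\mapsto\bar f(x)$ is measurable (handled exactly as in Remark~\ref{remark on Young quasiconvexity defs}(i), using $\bar f(x)=\lim_k\|f(x,\cdot)\|_{L^k(\mathbb R^{N\times n};\nu_x)}$ together with Theorem~\ref{fundamental-theorem}(i) extended to normal integrands via \cite[Corollary 6.30]{Fonseca-Leoni-Lp}), and that the ${\rm curl}$-Young quasiconvexity of $f(x,\cdot)$, which is stated for almost every $x$ separately, can be applied simultaneously with the almost-everywhere identification $\nu_x$ is a homogeneous gradient Young measure with barycenter $Du(x)$ — so one needs a countable intersection of full-measure sets. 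Everything else is a routine subsequence extraction combined with Lemma~\ref{Barronmonx}, which is precisely the technical heart already isolated in Appendix~B. Note that ${\rm curl}_{(p>1)}$-Young quasiconvexity would work in the same way but would only control $W^{1,p}$-gradient Young measures; since a gradient Young measure is a $W^{1,p}$-gradient Young measure for every finite $p$, the gradient-Young-measure version stated here is the sharper hypothesis and the one that matches the argument above.
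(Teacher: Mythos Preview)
Your proposal is correct and follows essentially the same route as the paper: pass to a subsequence generating a gradient Young measure, apply Lemma~\ref{Barronmonx} to bound the $\liminf$ from below by $\operatorname*{ess\,sup}_x\bar f(x)$, then use ${\rm curl}$-Young quasiconvexity pointwise (via the homogeneous characterization in Proposition~\ref{curlpYoungCharacterizations}) to conclude $\bar f(x)\ge f(x,Du(x))$. The paper's own proof is somewhat terser and invokes Definition~\ref{def more convex notions}(2) directly rather than the homogeneous-measure characterization, but the substance is identical; your added remarks on subsequence extraction and the measurability of $x\mapsto\bar f(x)$ are welcome clarifications that the paper glosses over.
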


\begin{proof}
	The result follows from Lemma \ref{Barronmonx} and Definition \ref{def more convex notions} (2).
	Without loss of generality we can assume that $f$ is non negative.
	
	Indeed,	taken any sequence $(u_k)$ weakly * converging to $u$ in $W^{1,\infty}(\Omega;\mathbb R^N)$, which generates a $W^{1,\infty}$-gradient Young measure, $\nu_x$ with baricenter $D u(x)$, we have 
	\begin{align*}
		\displaystyle{
			\liminf_{k\to \infty}F(u_k)\geq \esssup_{x \in \Omega}\nu_x-\esssup_{\xi \in\mathbb R^{N\times n}} f(x,\xi)\geq f\left(x,\int_{\mathbb R^{N\times n}} \xi d \nu_x(\xi)\right) \hbox{ for a.e.} x \in \Omega}.\end{align*}
	Consequently
	\begin{align*}
		\liminf_{k\to \infty} F(u_k)
		\geq \esssup_{x \in \Omega }f(x,D u(x))= F(u),
	\end{align*}
	which proves the desired lower semicontinuity result.
\end{proof}

\section{}

The result below relates the level convexity of a function with a generalization of Jensen's inequality for the supremal setting. A proof can be found in Barron \cite[Theorem 30]{B1}, (see \cite[Theorem 1.2]{BJW}, where the theorem is stated under a lower semicontinuity hypothesis, and \cite[Lemma 2.4]{BJL} where one implication has been shown in order to provide a Hopf-Lax formula). It is possible to avoid this condition as already mentioned without proof in \cite{RZ}. For convenience of the reader we include the proof here.

\begin{theorem}\label{Jensensupremalscalar}
A Borel measurable function $f:\mathbb R^{n}\to \mathbb R$ is level convex if and only if it verifies the supremal Jensen's inequality:
\begin{equation}\nonumber
f\left(\int_\Omega \varphi\,d\mu\right)\leq \mu-\operatorname*{ess\,sup}_{x\in\Omega} f(\varphi(x))
\end{equation}
for every probability measure $\mu$ on $\mathbb{R}^{d}$ supported on the open set $\Omega \subseteq \mathbb{R}^{d}$, and every $\varphi \in L^1_\mu(\Omega;\mathbb{R}^{n})$.

In particular, considering the Lebesgue measure, if $\Omega$ is a set with finite Lebesgue measure, $$f\left(\frac{1}{|\Omega|}\int_\Omega\varphi(x)\,dx\right)\le \operatorname*{ess\,sup}_{x\in\Omega}f(\varphi(x)),\ \forall\ \varphi\in L^1(\Omega;\mathbb{R}^n).$$
\end{theorem}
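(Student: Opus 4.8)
The plan is to prove both implications by elementary measure-theoretic arguments, working with the sub-level sets of $f$. For the implication ``supremal Jensen's inequality $\Rightarrow$ level convex'', I would simply specialize the inequality: given $\xi,\eta\in\mathbb{R}^n$ and $\lambda\in(0,1)$, take $\Omega$ to be any open set containing $\xi$ and $\eta$ (say a large ball), let $\mu:=\lambda\delta_\xi+(1-\lambda)\delta_\eta$ be the corresponding convex combination of Dirac masses, and let $\varphi$ be the identity map. Then $\int_\Omega\varphi\,d\mu=\lambda\xi+(1-\lambda)\eta$, while the $\mu$-essential supremum of $f\circ\varphi$ is exactly $\max\{f(\xi),f(\eta)\}$ since $\mu$ charges only the two points $\xi,\eta$. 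This yields the level convexity inequality immediately. One must only check that Dirac-supported measures are admissible and that $\varphi=\mathrm{id}\in L^1_\mu$, which is trivial.

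For the harder direction, ``level convex $\Rightarrow$ supremal Jensen's inequality'', I would fix a probability measure $\mu$ on $\Omega$, a map $\varphi\in L^1_\mu(\Omega;\mathbb{R}^n)$, and set $M:=\mu\text{-}\operatorname*{ess\,sup}_{x\in\Omega}f(\varphi(x))$. If $M=+\infty$ there is nothing to prove, so assume $M<\infty$. For each $\varepsilon>0$ consider the sub-level set $E_\varepsilon:=\{z\in\mathbb{R}^n:\ f(z)\le M+\varepsilon\}$, which is convex because $f$ is level convex (this is the standard equivalence: level convexity of $f$ is equivalent to convexity of all its sub-level sets). By definition of $M$, we have $\varphi(x)\in E_\varepsilon$ for $\mu$-a.e.\ $x$. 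Now I want to conclude that the barycenter $b:=\int_\Omega\varphi\,d\mu$ lies in $\overline{E_\varepsilon}$ (or at least in $E_\varepsilon$ itself, after a further argument); then $f(b)\le M+\varepsilon$, and letting $\varepsilon\to 0$ gives $f(b)\le M$, which is the claim.

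The main obstacle is precisely this last step: showing the barycenter of a probability measure carried by a convex set $C$ belongs to $C$, when $C$ need not be closed and the measure is only $L^1$-integrable rather than compactly supported. The standard route is via separation: if $b\notin\overline{C}$, a hyperplane strictly separates $b$ from $\overline{C}$, contradicting $b=\int\varphi\,d\mu$ together with $\varphi\in C$ $\mu$-a.e.; this gives $b\in\overline{E_\varepsilon}$. To upgrade from $\overline{E_\varepsilon}$ to the conclusion, I would note $\overline{E_\varepsilon}\subseteq\{f\le M+2\varepsilon\}$ fails in general without semicontinuity, so instead I keep $b\in\overline{E_\varepsilon}$ for every $\varepsilon$ and argue directly: for any $\delta>0$, $b$ is within $\delta$ of a point $z_\delta$ with $f(z_\delta)\le M+\varepsilon$; but rather than passing limits in $f$ (which we cannot, lacking continuity), I instead observe that $b\in\bigcap_{\varepsilon>0}\overline{E_\varepsilon}=\overline{\{f\le M\}}$, and then use that a convex set's closure and the convex set itself have the same barycenters of ``full'' measures only up to relative-boundary issues. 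The cleanest fix, which I would adopt, is the one used in the literature (Barron, Jensen, Wang): reduce first to the case $\varphi$ bounded by truncation $\varphi_R:=\varphi\,\chi_{\{|\varphi|\le R\}}+$ (renormalization), apply the compactly-supported case where $E_\varepsilon\cap \overline{B_R}$ is compact convex so its barycenter lies inside it by the finite-dimensional Minkowski/Carathéodory representation, then pass $R\to\infty$ using $\varphi\in L^1_\mu$ and dominated convergence for the barycenter together with monotonicity of the essential supremum. The final ``in particular'' statement is just the special case $\mu=\mathcal{L}^n\lfloor\Omega/|\Omega|$ and $d=n$, $\varphi\in L^1$.
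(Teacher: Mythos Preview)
Your treatment of the implication ``supremal Jensen $\Rightarrow$ level convex'' is fine and matches the paper's.

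For the main direction there is a genuine gap. Once you reach $b\in\overline{E_\varepsilon}$ for every $\varepsilon>0$, you cannot conclude $f(b)\le M$ without lower semicontinuity of $f$, and the statement explicitly avoids that hypothesis. Concretely, take $f=\chi_{\{z_1\ge 1\}}$ on $\mathbb{R}^n$ (this is level convex but not l.s.c.): for $M=0$ one has $E_\varepsilon=\{z_1<1\}$ for all $\varepsilon\in(0,1)$, hence $\bigcap_{\varepsilon}\overline{E_\varepsilon}=\{z_1\le 1\}$, which contains points with $f=1>M$. So your $\varepsilon$-route stalls exactly where you sensed it would. Your proposed repair by truncation does not close it either: setting $\varphi_R=\varphi\,\chi_{\{|\varphi|\le R\}}$ forces $\varphi_R=0$ on $\{|\varphi|>R\}$, and $0$ need not belong to $E_\varepsilon$; and even in the bounded case, $E_\varepsilon\cap\overline{B_R}$ is not compact since $E_\varepsilon$ is not closed, so the ``compact convex $\Rightarrow$ barycenter inside'' step is not available. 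Carath\'eodory/Minkowski representations do not by themselves produce the statement ``barycenter of a probability measure carried by a non-closed convex set lies in that set''.

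The paper's argument bypasses the $\varepsilon$-layer entirely: it applies the claim directly to $C:=\{f\le M\}$ (note $\varphi(x)\in C$ $\mu$-a.e.\ by definition of $M$) and proves the purely convex-geometric lemma that if $\varphi(x)\in C$ $\mu$-a.e.\ with $C\subset\mathbb{R}^n$ convex, then $\int_\Omega\varphi\,d\mu\in C$ (not merely $\overline{C}$). The proof is by induction on $\dim C$: certainly $\xi_0:=\int\varphi\,d\mu\in\overline{C}$; if $\xi_0\in\operatorname{rel\,int}C$ we are done; otherwise a nontrivial supporting hyperplane $H=\{\langle\alpha,\cdot\rangle=c\}$ at $\xi_0$ gives $\langle\alpha,\varphi(x)-\xi_0\rangle\ge 0$ $\mu$-a.e., and integrating yields $0$, so the integrand vanishes $\mu$-a.e.\ and $\varphi(x)\in H\cap C$ $\mu$-a.e. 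Since $\dim(H\cap C)<\dim C$, the inductive hypothesis gives $\xi_0\in H\cap C\subset C$. This is the missing idea in your proposal: the supporting-hyperplane-plus-dimension-drop argument is what lets you land in $C$ itself rather than its closure, and it is precisely what makes the theorem true without any semicontinuity of $f$.
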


\begin{proof} To show that a function satisfying the supremal Jensen's inequality is level convex, it suffices to take  for every $t \in [0,1]$ a function $\varphi$ whose values are $\xi$ on a set of $\mu$-measure $t$ and $\eta$ on a set of $\mu$-measure $1-t$.

Now we prove that, if $f$ is level convex then it satisfies supremal Jensen's inequality. To this end it is enough to prove that, given an open set $\Omega \subseteq \mathbb{R}^{d}$, for every probability measure $\mu$ on $\mathbb{R}^{d}$ supported on $\Omega$, if $\varphi \in L^1_\mu(\Omega;\mathbb{R}^{n})$ is such that $\varphi(x) \in C$, $\mu$-a.e. $x\in \Omega$ with $C\subset\mathbb{R}^{n}$ a convex set with finite dimension, then
\begin{equation}\label{MC}
\int_\Omega \varphi\, d \mu \in C.
\end{equation}
Indeed, once this is proved we obtain supremal Jensen's inequality taking $C$ as the level set of $f$ corresponding to $\displaystyle\mu-\operatorname*{ess\,sup}_{x\in\Omega} f(\varphi(x))$,  i.e. $\displaystyle \mu-L_{\operatorname*{ess\,sup}_{x\in\Omega} f(\varphi (x))}(f)$, which is convex under the level convexity assumption on $f$.

Thus, it remains to prove that under the above assumptions \eqref{MC} holds. We argue by an inductive argument on the dimension $N$ of the convex set $C$. Clearly there is nothing to prove if $N=0$. Assume  that $C$ is a convex set of dimension $N>0$, that is the smallest affine space containing $C$ has dimension $N$. If $\varphi(x) \in C$ for $\mu$-a.e. $x \in \Omega$, clearly $$\xi_0:=\int_\Omega \varphi \,d \mu \in \overline{C},$$ which is still convex. The thesis is obvious if $\xi_0$ belongs to the interior of $C$ relative to the smallest affine subset containing $C$, in symbols $\xi_0\in {\rm rel\,int}\,C$, thus we can reduce ourselves to the case when $\xi_0  \in \overline{C}\setminus {\rm rel\,int}\,C$, the relative boundary of $C$. By  \cite[Theorem 6.3]{Rockafellar}, $\xi_0  \in \overline{C}\setminus {\rm rel\,int}\,\overline{C}$ and thus, \cite[Theorem 11.6]{Rockafellar} ensures the existence of a non trivial supporting hyperplane $H=\{\xi\in\mathbb{R}^{n}:\ \langle \alpha,\xi\rangle=c\}$ to $\overline{C}$ containing $\xi_0$. That is, $\langle\alpha,\xi_0\rangle=c$ and $\langle\alpha,\xi\rangle\ge c,\ \forall\ \xi\in \overline{C}$. Therefore
$$\langle \alpha,\xi-\xi_0 \rangle\ge 0,\ \forall\ \xi\in \overline{C}$$ and taking, in particular, $\xi=\varphi(x)$, we get $$\langle \alpha,\varphi(x)-\xi_0\rangle\ge 0,\ \mu-a.e.\ x\in \Omega.$$ Integrating in $\Omega$, it results  $$\int_\Omega\langle \alpha,\varphi(x)-\xi_0\rangle\,d\mu(x)=\langle \alpha,\int_\Omega\varphi(x)-\xi_0\,d\mu(x)\rangle=0.$$

Therefore, $\langle \alpha,\varphi(x)-\xi_0\rangle= 0,\ \mu-a.e.\ x\in \Omega$ which means that $\mu-a.e.\ x \in \Omega$, $\varphi(x)\in H \cap C$ which is a convex set with dimension less than $N$, thus by induction hypothesis $\xi_0 \in H\cap C$, and thus it is also in $C$. That concludes the proof.
\end{proof}

\color{black}




\section*{Acknowledgments}
This work is funded by national funds through the FCT Fundacao para a Ciencia e a Tecnologia, I.P., under the scope of the projects UIDB/00297/2020 and UIDP/00297/2020 (Center for Mathematics and Applications).

AMR aknowledges the support of INdAM-GNAMPA through Progetto Professori Visitatori 2021 and the hospitality of Dipartimento di Scienze di Base ed Applicate per l'Ingegneria di Sapienza - University of Rome.

The last author is a member of INdAM-GNAMPA whose support, through the  projects 
``Prospettive nella scienza dei materiali: modelli variazionali, analisi
asintotica e omogeneizzazione'' (2023),  ``Analisi variazionale di modelli non-locali
nelle scienze applicate'' (2020)  is gratefully ackowledged.
EZ is also indebted with  Departamento de Matem\'{a}tica, Faculdade de Ci\^{e}ncias e Tecnologia, Universidade Nova de Lisboa for its kind hospitality.

\end{document}